\definecolor{darkgreen}{rgb}{0,0.30,0}
\definecolor{darkred}{rgb}{0.75,0,0}
\definecolor{darkblue}{rgb}{0,0,0.6}
\def\makeautorefname#1#2{\expandafter\def\csname#1autorefname\endcsname{#2}}
\theoremstyle{definition}
\newtheorem{theorem}{Theorem}[section]
\newtheorem{assumption}[theorem]{Assumption}
\newtheorem{corollary}[theorem]{Corollary}
\newtheorem{counterexample}[theorem]{Counterexample}
\newtheorem{definition}[theorem]{Definition}
\newtheorem{example}[theorem]{Example}
\newtheorem{lemma}[theorem]{Lemma}
\newtheorem{notation}[theorem]{Notation}
\newtheorem{note}[theorem]{Note}
\newtheorem{proposition}[theorem]{Proposition}
\newtheorem{setup}[theorem]{Setup}
\newtheorem{remark}[theorem]{Remark}
\newtheorem{terminology}[theorem]{Terminology}
\newtheorem{warn}[theorem]{Warning}
\let\c@corollary=\c@theorem
\let\c@proposition=\c@theorem
\let\c@lemma=\c@theorem
\let\c@assumption=\c@theorem
\let\c@conjecture=\c@theorem
\let\c@definition=\c@theorem
\let\c@example=\c@theorem
\let\c@remark=\c@theorem
\let\c@notation=\c@theorem
\let\c@strategy\c@theorem
\providecommand{\Aut}{\text{Aut}}
\providecommand{\BU}{\text{BU}}
\providecommand{\EU}{\text{EU}}
\providecommand{\Ex}{\text{Ex}}
\providecommand{\GL}{\text{GL}}
\providecommand{\Gr}{\text{Gr}}
\providecommand{\Ho}{\text{Ho}}
\providecommand{\hocolim}{\text{hocolim}}
\providecommand{\id}{\text{id}}
\providecommand{\im}{\text{im}}
\providecommand{\KO}{\text{KO}}
\providecommand{\KU}{\text{KU}}
\providecommand{\MU}{\text{MU}}
\providecommand{\Res}{\text{Res}}
\providecommand{\RO}{\text{RO}}
\providecommand{\SO}{\text{SO}}
\providecommand{\Sym}{\text{Sym}}
\providecommand{\Tr}{\text{Tr}}
\providecommand{\C}{\mathbb{C}}
\providecommand{\P}{\mathbb{P}}
\providecommand{\R}{\mathbb{R}}
\providecommand{\Z}{\mathbb{Z}}
\providecommand{\po}{\arrow[ul,phantom,"\ulcorner" very near start]}
\providecommand{\pb}{\arrow[dr,phantom,"\lrcorner" very near start]}
\providecommand{\xto}[1]{\xrightarrow{#1}}
\providecommand{\hookto}{\xhookrightarrow{}}
\providecommand{\superimpose}[2]{%
  {\ooalign{$#1\@firstoftwo#2$\cr\hfil$#1\@secondoftwo#2$\hfil\cr}}}
\definecolor{xred}{RGB}{148,2,5}
\definecolor{xdarkblue}{RGB}{7,43,187}
\definecolor{xlightblue}{RGB}{42,157,176}
\definecolor{xviolet}{RGB}{98,70,107}
\definecolor{xgreen}{RGB}{0,116,31}
\definecolor{xyellow}{RGB}{182,143,64}
\definecolor{xpink}{RGB}{221,5,203}
\definecolor{purple}{RGB}{128, 0, 255}
\definecolor{darkgreen}{rgb}{0,0.30,0}
\newcommand{\green}[1]{\color{darkgreen}#1 \color{black}}
\definecolor{darkred}{rgb}{0.75,0,0}
\definecolor{darkblue}{rgb}{0,0,0.6} 
\pgfplotsset{compat=1.18}
\tikzstyle{blue text}=[fill=none, draw=none, shape=circle, text={rgb,255: red,8; green,44; blue,201}]
\tikzstyle{xdb dotted}=[-, draw={rgb,255: red,7; green,43; blue,187}, line width=1pt, dotted]
\tikzstyle{black 0.8}=[-, draw=black, line width=0.8pt]
\tikzstyle{black line}=[-, draw=black, line width=0.25mm]
\tikzstyle{black dashed}=[-, dashed, line width=1pt]
\tikzstyle{xdb line}=[-, draw={rgb,255: red,7; green,43; blue,187}, line width=1pt]
\tikzstyle{xdb dashed}=[-, draw={rgb,255: red,7; green,43; blue,187}, dashed, line width=0.8pt]
\let\smashprod\wedge
\let\epsilon\varepsilon
\DeclareMathOperator{\ind}{ind}
\DeclareMathOperator{\Perf}{Perf}
\DeclareMathOperator{\PT}{PT}
\DeclareMathOperator{\Sp}{Sp}
\DeclareMathOperator{\Spc}{Spc}
\DeclareMathOperator{\Th}{Th}
\providecommand{\EU}{\text{EU}}
\renewcommand{\P}{\mathbb{P}}
\providecommand{\CP}{\mathbb{C}\text{P}}
\providecommand{\RP}{\mathbb{R}\text{P}}
\let\til\widetilde
\providecommand{\green}[1]{\textcolor{green}{#1}}
\providecommand{\red}[1]{\textcolor{red}{#1}}
\providecommand{\green}[1]{\textcolor{green}{#1}}
\providecommand{\blue}[1]{\textcolor{blue}{#1}}
\providecommand{\spherespec}{\mathbb{S}}
\providecommand{\algK}{\mathrm{K}}
\providecommand{\GOS}[1]{\mathrm{Sp}_{G}^{#1}}
\newcommand{\Ret}[2]{\mathrm{Ret}_{#1}^{#2}}
\newcommand\noloc{%
   \nobreak
   \mspace{6mu plus 1mu}
   {:}
   \nonscript\mkern-\thinmuskip
   \mathpunct{}
   \mspace{2mu}
}
\title{Equivariant enumerative geometry}
\author{Thomas Brazelton}
\date{\today}
\begin{document}

\begin{abstract}
We formulate an \textit{equivariant conservation of number}, which proves that a generalized Euler number of a complex equivariant vector bundle can be computed as a sum of local indices of an arbitrary section. This involves an expansion of the Pontryagin--Thom transfer in the equivariant setting. We leverage this result to commence a study of enumerative geometry in the presence of a group action. As an illustration of the power of this machinery, we prove that any smooth complex cubic surface defined by a symmetric polynomial has 27 lines whose orbit types under the $S_4$-action on $\CP^3$ are given by $[S_4/C_2]+[S_4/C_2'] + [S_4/D_8]$, where $C_2$ and $C_2'$ denote two non-conjugate cyclic subgroups of order two. As a consequence we demonstrate that a real symmetric cubic surface can only contain 3 or 27 real lines.
\end{abstract}

\maketitle

\setcounter{tocdepth}{1}
\tableofcontents{}

\section{Introduction}
Enumerative geometry poses geometric questions of the form ``how many?''~and expects integral answers. Over two millennia ago Apollonius asked how many circles are tangent to any three generic circles drawn on the plane. In the mid-1800's Salmon and Cayley famously proved that there are 27 lines on a smooth cubic surface over the complex numbers, and it is a classical result that there are 2,875 lines on a general quintic threefold. The power of enumerative geometry lies in the principle of \textit{conservation of number} --- that enumerative answers are conserved under changes in initial parameters: there are eight circles tangent to \textit{any} three generic circles, 27 lines on \textit{any} smooth cubic surface, and 2,875 lines on \textit{any} general quintic threefold.

In this work we propose solving enumerative problems in the presence of a group action. Related works include \cite{Damon,Bethea,C2Bezout}, but these differ in perspective. We formulate and prove a version of conservation of number in this context which allows us to compute answers to equivariant enumerative problems valued in the Burnside ring of a group.

\begin{theorem}\label{thm:main-conservation} \textit{(Equivariant conservation of number)} Let $G$ be any finite group, and let $p\colon E \to M$ be an equivariant complex vector bundle of rank $n$ over a smooth proper $G$-equivariant $n$-manifold, and let $A$ be a complex oriented $\RO(G)$-graded cohomology theory. Let $\sigma\colon M \to E$ be any equivariant section with isolated simple zeros. Then we have a well-defined Euler number valued in $\pi_0^G A$, computed by:
\begin{align*}
    n(E) = \sum_{G\cdot x \subseteq Z(\sigma)} \Tr_{G_x}^G(1).
\end{align*}
Working in homotopical complex bordism $\MU_G$, as a corollary we may see that given two such sections $\sigma,\sigma'$, there is an isomorphism of $G$-sets between their zero loci $Z(\sigma) \cong Z(\sigma')$ (\autoref{thm:cons-number}). In other words, \textit{the $G$-action on the solutions to such an enumerative problem is conserved.}
\end{theorem}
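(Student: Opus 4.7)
The plan is to adapt the classical Pontryagin--Thom argument for conservation of number to the equivariant setting, with complex orientability of $A$ playing a double role: it pins down an $\RO(G)$-graded Thom class, and it forces each local index at a simple zero to equal $1$. First I would \emph{define} the Euler number intrinsically. The complex orientation of $A$ supplies a Thom class $u_E$ for $E$, yielding an Euler class $e(E) = z^{\ast} u_E \in A_G^n(M)$. Since $M$ is smooth, proper, and complex (hence orientable for any complex-oriented theory), integration along $M$ defines $n(E) = \int_M e(E) \in \pi_0^G A$. Two equivariant sections are always linked by the straight-line homotopy $t \mapsto (1-t)z + t\sigma$, so we may compute $n(E)$ by evaluating $\sigma^{\ast} u_E$ rather than $z^{\ast} u_E$, and in particular the Euler number is independent of the chosen section.

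Next I would perform the equivariant Pontryagin--Thom collapse along $Z(\sigma)$. Because zeros are isolated and $G$ is finite, $Z(\sigma) = \bigsqcup_i G \cdot x_i$ is a finite disjoint union of zero-dimensional orbits. The equivariant tubular neighborhood theorem (equivalently the slice theorem) produces invariant neighborhoods $G \times_{G_{x_i}} V_i$ with $V_i$ a $G_{x_i}$-invariant disk in $T_{x_i} M$. Collapsing the complement yields a $G$-equivariant map
\[
\Sigma^\infty_G M_+ \longrightarrow \bigvee_i G_+ \wedge_{G_{x_i}} S^{T_{x_i} M},
\]
and $\sigma$ induces on each wedge summand a map of $G_{x_i}$-spheres $S^{T_{x_i} M} \to S^{E_{x_i}}$ which in local coordinates is the Jacobian $d\sigma_{x_i}$.

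The core local step is then to identify each wedge summand's contribution. Simplicity of the zero at $x_i$ means $d\sigma_{x_i} \colon T_{x_i} M \xrightarrow{\sim} E_{x_i}$ is a $G_{x_i}$-equivariant isomorphism of complex representations, so (using complex orientation to collapse the $\RO(G)$-grading of the Thom class to the integer $n$) the local map represents $1 \in \pi_0^{G_{x_i}} A$. By the Wirthm\"uller adjunction, mapping the induced spectrum $G_+ \wedge_{G_{x_i}} S^{T_{x_i} M}$ into $A$ corresponds to specifying a class in $\pi_0^{G_{x_i}} A$, and the passage to $\pi_0^G A$ through this adjunction is precisely the transfer $\Tr_{G_{x_i}}^G$. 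Summing the orbit contributions recovers
\[
n(E) = \sum_{G \cdot x \subseteq Z(\sigma)} \Tr_{G_x}^G(1).
\]

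The main obstacle is the equivariant Pontryagin--Thom transfer itself: one must verify that collapsing the complement of the orbit $G \cdot x$ and then pairing with $A$ via the complex-oriented local index really implements $\Tr_{G_x}^G(1)$ and not some twist. This is exactly the equivariant expansion of the Pontryagin--Thom transfer flagged in the abstract, and I would expect this to occupy the bulk of the technical work, with careful bookkeeping of the $\RO(G)$-graded Thom isomorphism under the tubular-neighborhood identification. Once this ingredient is in place, the corollary over $\MU_G$ follows by invoking sufficient linear independence of the classes $\Tr_H^G(1)$ for non-conjugate subgroups $H \leq G$ in $\pi_0^G \MU_G$, which detects the $G$-set structure of $Z(\sigma)$.
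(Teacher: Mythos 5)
Your proposal follows the same overall strategy as the paper: define the Euler number, establish section-independence via the straight-line homotopy to the zero section, perform a Pontryagin--Thom collapse along the zero locus onto a wedge of induced representation spheres, identify the local contribution at each simple orbit with $1 \in \pi_0^{G_x} A$ via the complex orientation (essentially \autoref{prop:rep-isos-thom-classes} and \autoref{lem:local-index} in the paper), and reassemble these as transfers. You also correctly flag the identification of the collapse at an orbit with the transfer $\Tr_{G_x}^G(1)$ as the core technical burden; in the paper this is \autoref{lem:PT-compatibility}, proved via Costenoble--Waner duality and the composite-of-dual-pairs theorem, showing that the transfer along $\pi_Z$ factors as the transfer along $i\colon Z\hookto M$ followed by that along $\pi_M$. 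The main presentational difference is that the paper works throughout in parametrized $G$-spectra with a six-functors formalism and Thom transformations, defining a refined Euler class in twisted cohomology with compact supports and then pushing forward; your version works directly with unparametrized collapse maps. The parametrized framework buys cleaner bookkeeping of twists and a conservation statement (\autoref{lem:conservation}) valid for arbitrary clopen components of the zero locus, not just isolated simple points. Two small cautions. First, your appeal to ``the Wirthm\"uller adjunction'' for producing the transfer is a shade loose: the free--forgetful adjunction by itself gives restriction, and what actually yields the transfer is the self-duality of $(G/H)_+$ together with the compatibility of PT transfers under composition; this is not automatic and is exactly what the cited lemma establishes. Second, for the $\MU_G$ corollary, rather than asserting linear independence of the $\Tr_H^G(1)$ outright, the paper proves $\pi_0^G\mathbb{S}_G \to \pi_0^G\MU_G$ is injective (nilpotence detection for $\MU_G$ plus reducedness of the Burnside ring, \autoref{prop:MUG-detects-nilpotents}) and then uses that the unit is a map of Tambara functors, so the sum of transfers has a unique Burnside-ring preimage, namely the $G$-set $Z(\sigma)$.
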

Our result is more general, admitting local indices for more general zero loci than isolated simple points (see \autoref{lem:conservation}), however the context stated above is sufficient to carry out some computations.

To illustrate the power of this machinery, consider the case of a smooth cubic surface $X = V(F) \subseteq \CP^3$. We will say that $X$ is $S_4$-\textit{symmetric} (or just \textit{symmetric} for short) if it is fixed under the $S_4$-action on $\CP^3$ by permuting coordinates (equivalently, $F(x_0,x_1,x_2,x_3)$ is a symmetric homogeneous polynomial). We know classically that there are 27 lines on $X$, however under the $S_4$-action lines on $X$ are mapped to other lines on $X$. It is natural then to inquire whether the $S_4$-orbits of the lines on $X$ are conserved as the symmetric cubic surface varies. It turns out that this question admits an answer that doesn't depend upon the choice of $S_4$-symmetric cubic surface.

\begin{theorem}\label{thm:intro-27} On \textit{any} smooth symmetric cubic surface over the complex numbers, the 27 lines come in the following orbits:
\begin{align*}
    [S_4/C_2] + [S_4/C_2'] + [S_4/D_8],
\end{align*}
where $C_2$ and $C_2'$ are two non-conjugate subgroups of $S_4$ of order two. Explicitly, there are 12 lines in an orbit with isotropy group $C_2 = \left\langle (1\ 2) \right\rangle$, 12 lines in an orbit with isotropy group $C_2' = \left\langle (1\ 3)(2\ 4) \right\rangle$, and three lines in an orbit with isotropy group $D_8$.
\end{theorem}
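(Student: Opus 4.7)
The plan is to realize the $27$ lines on a smooth symmetric cubic surface $X = V(F) \subseteq \CP^3$ as the zero locus of an $S_4$-equivariant section of an equivariant vector bundle, apply \autoref{thm:main-conservation} to conclude that the $S_4$-orbit type is independent of the choice of symmetric cubic, and then perform an explicit computation on one convenient example.

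\textbf{Setup and reduction.} Let $\mathcal{S}$ denote the tautological rank-$2$ subbundle on $\Gr(2, 4)$ and let $E := \Sym^3 \mathcal{S}^*$, a rank-$4$ bundle whose fiber over a $2$-plane $W \subseteq \C^4$ is the space of cubic forms on $W$. The $S_4$-action on $\C^4$ by coordinate permutation makes $\Gr(2,4)$ and $E$ into $S_4$-equivariant objects, and a homogeneous cubic $F$ on $\C^4$ defines a section $\sigma_F$ of $E$ via $W \mapsto F|_W$ whose zero locus is exactly the set of lines on $V(F)$. When $F$ is symmetric, $\sigma_F$ is $S_4$-equivariant, and smoothness of $V(F)$ guarantees that $\sigma_F$ has $27$ isolated simple zeros. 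The $\MU_{S_4}$-refinement of \autoref{thm:main-conservation} then implies that $Z(\sigma_F) \cong Z(\sigma_{F'})$ as $S_4$-sets for any two smooth symmetric cubics, so the orbit decomposition is independent of the choice of cubic, and it suffices to compute it on a single example.

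\textbf{A convenient example.} I take the Fermat cubic $F_0 = x_0^3 + x_1^3 + x_2^3 + x_3^3$, whose $27$ lines are classically parameterized by an unordered partition $P$ of $\{0,1,2,3\}$ into two unordered pairs (three of these) together with two cube roots $\zeta_1, \zeta_2$ of $-1$, giving lines of the form $\{x_i + \zeta_1 x_j = 0,\ x_k + \zeta_2 x_l = 0\}$ subject to the identification $(i,j,\zeta) \sim (j,i,\zeta^{-1})$. Since any permutation stabilizing such a line must preserve $P$, its stabilizer lies in $\Stab_{S_4}(P) \cong D_8$. Working through the $D_8$-action on the $9$ lines through each fixed pairing, one finds three $S_4$-orbits: (i) the line with $\zeta_1 = \zeta_2 = -1$ for each pairing is fixed by all of $D_8$, contributing a single $3$-element orbit $[S_4/D_8]$; (ii) the lines with both $\zeta_1, \zeta_2 \in \{-\omega, -\omega^2\}$ form a single $12$-element orbit whose isotropy is generated by the double transposition swapping the two pairs, conjugate to $C_2'$; and (iii) the lines with exactly one of $\zeta_1, \zeta_2$ equal to $-1$ form a single $12$-element orbit whose isotropy is a transposition within a pair, conjugate to $C_2$. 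Summing yields $[S_4/C_2] + [S_4/C_2'] + [S_4/D_8]$, as required.

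\textbf{Main obstacle.} The substantive work is the explicit bookkeeping of the $D_8$-action on the $9$ lines through each pairing, in particular handling the identification $(i,j,\zeta) \sim (j,i,\zeta^{-1})$, under which transpositions within a pair permute the cube roots of $-1$ nontrivially (swapping $-\omega \leftrightarrow -\omega^2$ while fixing $-1$). Care must also be taken to correctly distinguish the two non-conjugate classes of order-two subgroups of $S_4$, which accounts for the appearance of \emph{two} different $12$-element orbits with abstractly isomorphic but non-conjugate stabilizers. The isolated-simple-zeros hypothesis needed to invoke \autoref{thm:main-conservation} is immediate from smoothness of the Fermat cubic and, indeed, of any smooth cubic surface.
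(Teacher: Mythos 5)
Your proposal is correct and follows essentially the same route as the paper: reduce via the $\MU_{S_4}$-form of equivariant conservation of number to a single smooth symmetric cubic, take the Fermat cubic, and compute the $S_4$-orbits of its $27$ lines directly (the paper tabulates the lines explicitly rather than indexing by partitions and cube roots, but the computation is the same). One small sign slip: with your convention $x_i + \zeta_1 x_j = 0$ the parameters $\zeta_1,\zeta_2$ must be cube roots of $1$, not of $-1$ (equivalently, write $x_i = \zeta_1 x_j$ if you want $\zeta_1^3 = -1$); this does not affect the orbit count, since the involution $\zeta \mapsto \zeta^{-1}$ and the pair-swap act identically on either set of roots.
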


On the famous \textit{Clebsch cubic surface}, which is symmetric, all 27 lines are defined over the reals, and we can visualize their orbits in \autoref{fig:clebsch}.
\begin{figure}[h]
  \includegraphics[width=0.3\linewidth]{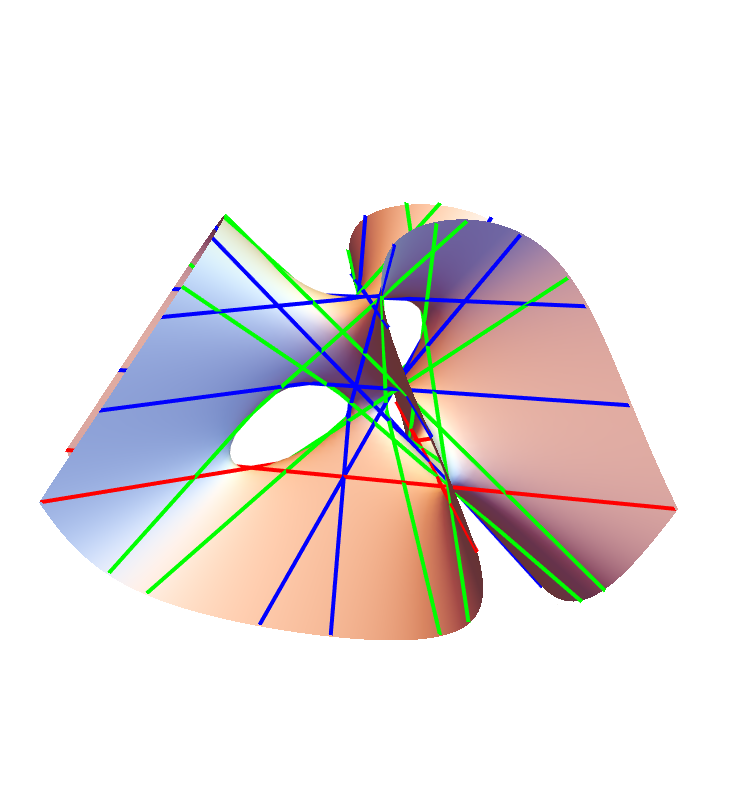}
  \includegraphics[width=0.3\linewidth]{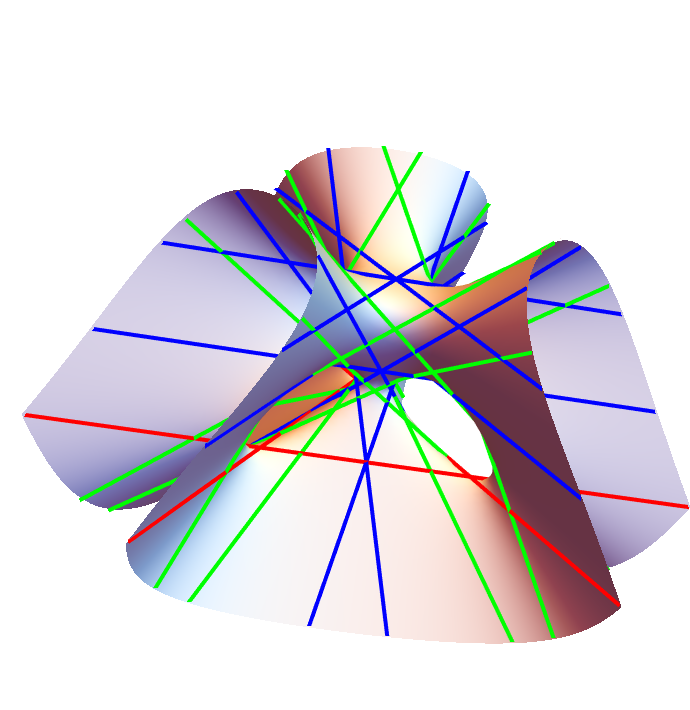}
  \includegraphics[width=0.3\linewidth]{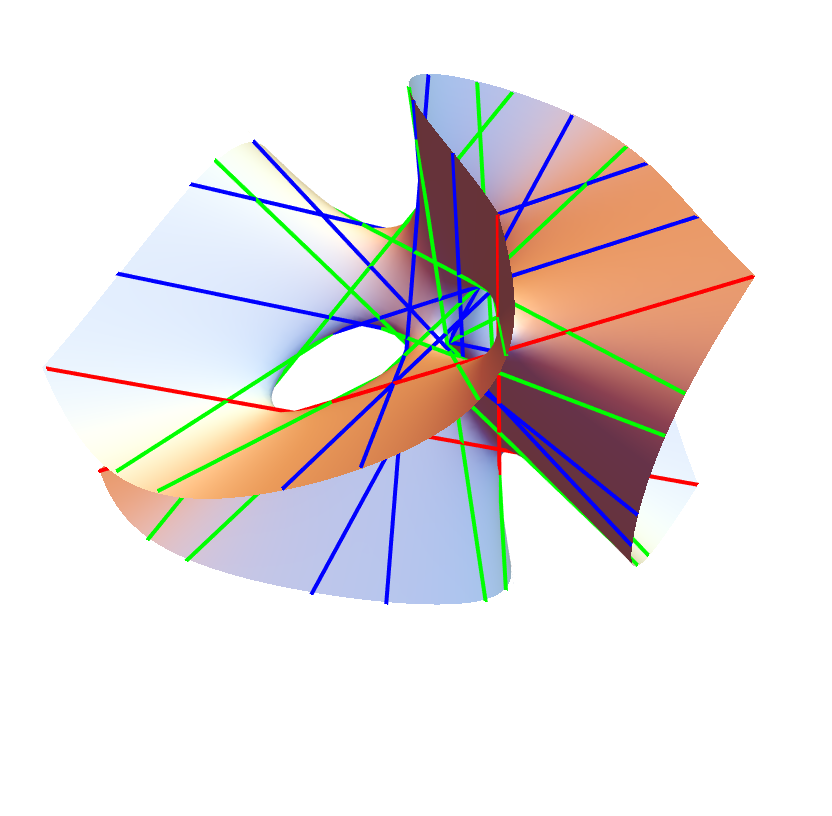}
  \centering
  \caption{The 27 lines on the Clebsch surface, grouped into $S_4$-orbits according to color, pictured from a few different angles. An animated version is available on the author's webpage.}
  \label{fig:clebsch}
\end{figure}

Given a real cubic surface, as it admits 27 lines after base changing to the complex numbers it is natural to ask how many of these are defined over the reals. Schl\"afli's Theorem tells us that a smooth real cubic surface can contain 3, 7, 15, or 27 lines, and all of these possibilities do occur \cite{Schlafli}. Under the presence of symmetry we can refine this result.

\begin{theorem}\label{thm:intro-27-real} A smooth real symmetric cubic surface can contain either 3 or 27 lines, and both of these possibilities do occur.
\end{theorem}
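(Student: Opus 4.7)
The plan is to analyze complex conjugation acting on the 27 complex lines on the complexification $X_\C$, using the $S_4$-orbit decomposition from \autoref{thm:intro-27}. Since the $S_4$-action on $\CP^3$ is defined over $\R$, complex conjugation commutes with it, and because the three orbits have pairwise non-conjugate stabilizers $C_2$, $C_2'$, $D_8$, conjugation preserves each orbit individually and defines an $S_4$-equivariant involution on it.

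I would then invoke the standard description of $S_4$-equivariant self-maps of $S_4/H$: such a map is given by a coset $nH \in N_{S_4}(H)/H$ via $gH \mapsto gnH$, and has a fixed point precisely when $n \in H$, in which case it is the identity. For the orbit of size $3$ with stabilizer $D_8$, the quotient $N_{S_4}(D_8)/D_8$ is trivial since $D_8$ is a self-normalizing Sylow 2-subgroup of $S_4$, so conjugation fixes this orbit pointwise and all $3$ such lines are real. For each orbit of size $12$ the involution is either the identity (contributing $12$ real lines) or fixed-point-free (contributing $0$), so a priori the real line count is $3$, $15$, or $27$.

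The main obstacle is ruling out the $15$-line case, i.e.\ showing the two size-$12$ orbits must share the same reality status. The approach I would take is to view conjugation as an element of the centralizer $C_{W(E_6)}(S_4)$ in the Weyl group $W(E_6)$, identified with the automorphism group of the Schl\"afli incidence graph on the 27 lines, and to show this centralizer has order $2$. The key geometric input is that the Schl\"afli incidences rigidly link the two size-$12$ orbits: on an explicit model such as the Fermat cubic one checks directly that distinct lines in the $C_2'$-orbit meet distinct $4$-subsets of the $C_2$-orbit and vice versa, so any centralizer element fixing one of the $12$-orbits pointwise must fix the other as well. This forces any nontrivial centralizer element to act nontrivially on \emph{both} size-$12$ orbits, restricting the possible Galois actions to the identity ($27$ real lines) or a single nontrivial involution ($3$ real lines). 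Both cases are realized: the Fermat cubic $V(x_0^3+x_1^3+x_2^3+x_3^3)$ has exactly $3$ real lines (its $27$ complex lines are cut out by pairs $\{x_i+\zeta x_j = 0,\ x_k + \eta x_l=0\}$ with $\zeta^3=\eta^3=1$ over the three pair-partitions of $\{0,1,2,3\}$, and only $\zeta=\eta=1$ yields real lines, one per partition), while the Clebsch cubic pictured in \autoref{fig:clebsch} is classically known to have all $27$ lines real.
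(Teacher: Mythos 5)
Your approach to ruling out the $15$-line case is genuinely different from the paper's. The paper appeals to a topological invariant of real lines: each real line on a real cubic surface is either hyperbolic or elliptic (Definition~5.7), this type is coordinate-free and hence constant on $S_4$-orbits (Proposition~5.8), and by Segre's theorem the real lines satisfy $\#\{\text{hyperbolic}\} - \#\{\text{elliptic}\} = 3$. In the $15$-line case the required split $(9,6)$ cannot be built from orbits of sizes $3$ and $12$, each of homogeneous type. Your proposal instead works entirely inside $\Aut(\text{Schl\"afli graph}) \cong W(E_6)$: complex conjugation is an $S_4$-equivariant, incidence-preserving permutation of the $27$ lines, and you want to show that any such permutation fixing one size-$12$ orbit pointwise must fix the other pointwise. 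These are independent routes; yours is purely combinatorial, while the paper's leans on a differential-topological invariant plus a classical theorem, and avoids examining the incidence structure at all.

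The gap is that your load-bearing incidence claim --- that distinct lines in the $C_2'$-orbit meet pairwise-distinct $4$-element subsets of the $C_2$-orbit, and conversely --- is asserted but not verified. Without it your argument only narrows the count to $3$, $15$, or $27$, which was already known from the preliminary reductions. Moreover, the claim is not self-evident: you would first need to establish that each line in a $12$-orbit meets exactly $4$ lines of the other $12$-orbit (as opposed to some other multiplicity), which itself takes a short count using the degree-$10$ regularity of the Schl\"afli graph and $S_4$-equivariance, and then you would need to check that the resulting twelve $4$-element subsets are actually pairwise distinct. You gesture at doing this on the Fermat cubic but do not carry it out, so the proof is incomplete as written. (The earlier parts --- equivariance of conjugation, $N_{S_4}(D_8) = D_8$ forcing the size-$3$ orbit to be real, and the identity-or-fixed-point-free dichotomy on each $12$-orbit, which rules out $7$ --- are correct and match what the paper uses.) Also, the framing via the order of $C_{W(E_6)}(S_4)$ is a detour: for the theorem you only need the one implication for the specific involution given by conjugation, not a full computation of the centralizer.
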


\subsection{Outline} In \autoref{sec:GOS} we discuss the theory of equivariant retractive spaces and parametrized spectra. We extend the theory of duality as laid out in \cite{Hu}, and discuss dualizing objects in terms of cotangent complexes. This allows us to define Thom transformations analogous to those found in the motivic setting, and to flesh out the six functors formalism for genuine orthogonal parametrized $G$-spectra.

In \autoref{sec:PT}, we explore the Pontryagin--Thom transfer from \cite{Hu,MS,ABG}. These will induce Gysin maps that allow us to push forward cohomology classes and carry out computations in the equivariant parametrized setting.

In \autoref{sec:cohomology} we provide a broad definition of compactly supported equivariant cohomology, twisted by a perfect complex, valued in any genuine ring spectrum. This culminates in the important result that, under certain orientation assumptions, cohomology classes twisted by a vector bundle can be pushed forward and expressed as sums of local contributions coming from the components of the zero locus of a section of a bundle.

In \autoref{sec:conservation} we discuss refined Euler classes in the parametrized equivariant setting. We recap the theory of equivariant complex orientations, and state and prove equivariant conservation of number (\autoref{thm:main-conservation}, as \autoref{thm:cons-number}). We use this to commence a study of enumerative geometry in the equivariant setting.

In \autoref{sec:27}, we provide an application of equivariant conservation of number by investigating the orbits of the 27 lines on a smooth symmetric cubic surface and proving that they are independent of the choice of symmetric cubic surface (\autoref{thm:intro-27}, as \autoref{thm:27-lines}). We argue that both the field of definition and the topological type (\autoref{def:type-of-line}) of a line are preserved under the group action. This allows us to eliminate the possibilities of seven or 15 real lines on a real symmetric cubic, refining Schl\"afli's Theorem in the symmetric setting (\autoref{thm:intro-27-real}, as \autoref{thm:27-real}).

\subsection{Acknowledgements} Thank you to Mona Merling and Kirsten Wickelgren for inspiring and supporting this work, to Cary Malkiewich for countless conversations about details big and small, to Markus Hausmann for helpful correspondence about equivariant orientations, and to William Balderrama for communicating equivariant nilpotence to us. Thank you also to Candace Bethea, Benson Farb, Joe Harris, Mike Hopkins, Sidhanth Raman, Frank Sottile, and Zhong Zhang for fruitful conversation about this work and related topics. Finally thank you to the anonymous referee whose comments greatly improved this paper. The author is supported by an NSF Postdoctoral Research Fellowship (DMS-2303242).

\section{Retractive spaces and parametrized spectra, equivariantly}\label{sec:GOS}
In this section we will establish technical machinery with the ultimate goal of obtaining well-defined Euler numbers for equivariant sections of complex bundles over smooth proper $G$-manifolds. In direct analogy to the theory of Euler classes in motivic homotopy theory, we will want to work over a base space, i.e. in a parametrized way. This yields many advantages, including a more streamlined characterization of dualizing objects, access to a natural six functors formalism, and a clear discussion of how $\RO(G)$-graded cohomology extends to $\KO_G(X)$-graded cohomology.

An advantage of working in this setting is the presence of \textit{Thom transformations}, which we will define as certain auto-equivalences in the stable setting. Explicitly, when working over a $G$-manifold $M$, we can take an equivariant vector bundle $E \to M$, and smash over $M$ with the fiberwise Thom space $\Th_M(E)$. We use these transformations to define twisted cohomology classes valued in any genuine ring spectrum, and develop the theory of their pushforwards. In particular we will see that we have a well-defined \textit{Euler class}, which pushes forward to an \textit{Euler number} in complex oriented cohomology theories. This number can be interpreted, and will serve as our main tool for solving equivariant enumerative problems.

\begin{notation} (Categorical notation)
Throughout, when working with categories, a subscript will denote a group $G$, following a convention in equivariant homotopy theory for working with genuine $G$-equivariance, while a superscript will denote that we are working parametrized over a space $X$. For example $\Sp_G^X$ will denote the category of genuine orthogonal $G$-equivariant parametrized $X$-spectra (\autoref{nota:GOSX}). When a superscript is omitted we working non-equivariantly, i.e. with the trivial group, and when a subscript is omitted we are working parametrized over a point.
\end{notation}

\begin{remark} (On machinery): We work here with the model category $\GOS{X}$ of genuine orthogonal $G$-spectra parametrized over a $G$-space or spectrum $X$. The reader should be warned that, should they venture deeper into this category, they may encounter some point-set issues obstructing a true 1-categorical six functors formalism, e.g. the pushforward $f_\ast$ is homotopically poorly behaved \cite[2.2.15]{Cary}, and even may fail to preserve weakly Hausdorff spaces \cite[\S2.2]{MS}, \cite{Lewis}. Thankfully none of these issues will rear their heads in this particular work.
\end{remark}

\begin{assumption} All spaces and all maps will be assumed to be equivariant with respect to the action of a compact Lie group $G$ unless otherwise explicitly stated. If not otherwise clarified, any statements made about vector bundles are equally true for both real and complex bundles.
\end{assumption}

\subsection{Basic definitions}

Given a $G$-space $X$, we denote by $(\Spc_G)_{/X}$ the slice category of $G$-spaces equipped with an equivariant map to $X$. This category isn't pointed, so we cannot make sense of phenomena like suspension and thus stabilization. To rectify this, we slice it under $X$ in order to obtain the category $\Ret{G}{X} := (\Spc_G)_{X//X}$ of retractive $G$-spaces over $X$. More explicitly:

\begin{definition} The category $\Ret{G}{X}$ of \textit{retractive} $G$\textit{-spaces over} $X$ has as objects commutative diagrams of the form
\[ \begin{tikzcd}
    X\rar\ar[dr,"\id" below left] & Y\dar\\
     & X.
\end{tikzcd} \]
That is, the category of spaces which equivariantly retract onto $X$. The morphisms are equivariant maps $Y\to Y'$ which commute with the inclusion and projection maps.
\end{definition}

\begin{example} $\ $
\begin{itemize}
    \item The category of retractive $G$-spaces over a point $\Ret{G}{}$ is the category of based $G$-spaces.
    \item For any subgroup $H\le G$, there is an equivalence of categories $\Ret{G}{G/H} \simeq \Ret{H}{}$.
\end{itemize}
\end{example}

\begin{proposition} The category $\Ret{G}{X}$ has finite products and coproducts --- given $A,B\in \Ret{G}{X}$ the product is given by the pullback $A \times_X B$ along the projection maps, while the coproduct is the pushout $A\cup_X B$ along the inclusions. We denote the coproduct by $A\vee_X B := A\cup_X B$ to stress the comparison with the wedge product in pointed spaces.
\end{proposition}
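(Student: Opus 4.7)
The plan is to verify directly that the two proposed constructions are the product and coproduct in $\Ret{G}{X}$, by writing down the required retraction data and then checking the universal properties. The strategy is completely formal and relies only on the universal properties of pullback and pushout in $\Spc_G$, so the only thing to make sure is that the relevant diagrams actually commute in $\Ret{G}{X}$ (i.e. that morphisms respect both the section-from-$X$ and the retraction-to-$X$).

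For the product, given $A,B\in \Ret{G}{X}$ with sections $i_A,i_B$ and retractions $r_A,r_B$, I would equip the pullback $A\times_X B$ (along $r_A$ and $r_B$) with the section $(i_A,i_B)\colon X\to A\times_X B$ — this is well defined because $r_A\circ i_A = \id = r_B\circ i_B$ — and with the retraction $r_A\circ \pi_A = r_B\circ \pi_B\colon A\times_X B\to X$, which are equal on the nose by definition of the pullback. I would then take an arbitrary $Z\in\Ret{G}{X}$ together with morphisms $f\colon Z\to A$ and $g\colon Z\to B$ of retractive spaces, and invoke the universal property of the pullback in $\Spc_G$ to produce the unique $G$-map $(f,g)\colon Z\to A\times_X B$. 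Compatibility of $(f,g)$ with the sections and retractions then follows by plugging the defining identities for $f,g$ into the two projections and invoking the uniqueness clause of the pullback.

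For the coproduct, I would dualize the argument. Equip the pushout $A\cup_X B$ (along $i_A$ and $i_B$) with the retraction induced by $r_A$ and $r_B$ via the pushout universal property (they agree on $X$ because $r_A\circ i_A = r_B\circ i_B = \id$), and with the section given by either composite $X\to A\to A\cup_X B$ (they coincide by construction of the pushout). Given $Z\in\Ret{G}{X}$ with morphisms $A\to Z$ and $B\to Z$ of retractive spaces, the universal property of the pushout in $\Spc_G$ produces the required unique $G$-map $A\cup_X B\to Z$, and compatibility with sections and retractions follows from the uniqueness clause exactly as in the product case.

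I do not anticipate a genuine obstacle: the only step requiring any care is checking that equivariant pullbacks and pushouts in $\Spc_G$ compute the honest pullbacks/pushouts in $\Ret{G}{X}$, which is true because the forgetful functor $\Ret{G}{X}\to \Spc_G$ creates connected limits and the construction of the coproduct is a pushout under $X$, hence a connected colimit in the slice $X/\Spc_G$ pulled along the retraction. The notational parallel with wedge sums in pointed spaces — recovered by taking $X=\ast$ — is then immediate, justifying the notation $A\vee_X B$.
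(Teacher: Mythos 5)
Your proof is correct and is the standard universal-property verification; the paper in fact states this proposition without any proof block, evidently regarding it as routine, and your argument supplies exactly the expected details. The explicit constructions in your first two paragraphs — section $(i_A,i_B)$ and retraction $r_A\circ\pi_A=r_B\circ\pi_B$ on the pullback, and the dual data on the pushout — together with the uniqueness clauses of the pullback/pushout universal properties already constitute a complete proof, so the closing paragraph about created limits is not needed. That final paragraph is also slightly garbled: the binary product and coproduct diagrams in $\Ret{G}{X}$ are not themselves connected, so ``creates connected limits'' does not directly apply to them; the correct observation (which you gesture at) is that the product in $\Ret{G}{X}$ is realized by the \emph{span} diagram $A\to X\leftarrow B$ (the retractions) and the coproduct by the \emph{cospan} $A\leftarrow X\to B$ (the sections), and it is these connected diagrams whose limits and colimits the forgetful functor creates. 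Since your first two paragraphs already argue this explicitly, the proof stands.
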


\begin{example}\label{ex:plus-space} Let $Y$ be any $G$-space equipped with a map $f\colon Y\to X$. Denote by $Y_{+X} \in \Ret{G}{X}$ the retractive space $Y \amalg X$, with inclusion given by mapping $X$ to itself, and projection given by $f$ and the identity:
\[ \begin{tikzcd}
    X\rar\ar[dr,"\id" below left] & Y \amalg X \dar["f \amalg \id" right]\\
     & X.
\end{tikzcd} \]
\end{example}

\begin{definition} Given $A,B \in \Ret{G}{X}$, we define their \textit{fiberwise smash product} $A \smashprod_X B$ to be the fiberwise cofiber of the map between the coproduct and product:
\[ \begin{tikzcd}
    A\vee_X B\rar\dar & A\times_X B\dar\\
    X\rar & A\smashprod_X B.
\end{tikzcd} \]
This turns $\Ret{G}{X}$ into a symmetric monoidal category, with unit given by $S^0_X := X_{+X}$.
\end{definition}

\begin{example} More generally, any $G$-representation $V$ has an associated \textit{representation sphere} $S^V$, which is the one-point compactification, based at the point at infinity. Denote by $S^V_X = X \times S^V$ the \textit{fiberwise representation sphere}. This has a natural projection to $X$, and by convention the fiber over $x$ is based at the point at infinity in $S^V$.
\end{example}

\begin{example} If $p\colon E \to X$ is a $G$-equivariant vector bundle, then the zero section endows it with the structure of an $X$-retractive space.
% \[ \begin{tikzcd}
%     X\rar["s"]\ar[dr,"\id_X" below left] & E\dar["p" right]\\
%      & X.
% \end{tikzcd} \]
\end{example}

\begin{example} Given an equivariant vector bundle $p\colon E \to X$, denote by $\Th_X(E)$ the \textit{fiberwise Thom space}, where the fibers $E_x$ have each been compactified to a different point at infinity (one obtains the ordinary Thom space $\Th(E)$ by gluing these points at infinity together):
\[ \begin{tikzcd}
    S^{E_x}\rar\dar\pb & \Th_X(E)\dar\\
    \left\{ x \right\}\rar[hook] & X.
\end{tikzcd} \]
The action of $G$ on $X$ induces an action on the points at infinity, and the inclusion of $X$ into these points at infinity endows $\Th_X(E)$ with the structure of a retractive $G$-space.
\end{example}

\begin{definition}\label{def:forgetful-functor-retractive-spaces} Let $f\colon X \to Y$ be a $G$-map. There is a \textit{forgetful} functor
\begin{align*}
    f_\sharp \colon \Ret{G}{X} \to \Ret{G}{Y},
\end{align*}
given by sending a retractive space $S$ over $X$ to the pushout $S\cup_f Y$ with inclusion and projection maps induced by the pushout:
\[ \begin{tikzcd}
    X\rar["f" above]\dar & Y\dar\ar[ddr, bend left=10,"\id" above right]\\
    S\rar\ar[dr, bend right=5] & f_\sharp S\po\ar[dr,dashed] \\
    & X\rar["f" below] & Y.
\end{tikzcd} \]
\end{definition}

\begin{warn} There is competing notation for the six functors appearing in parametrized homotopy theory, so we should clarify our notational choices before proceeding. May and Sigurdsson \cite{MS} refer to the functor described in \autoref{def:forgetful-functor-retractive-spaces} as $f_!$. We use $f_\sharp$ for this functor, as does \cite{Hu}, and we reserve the shriek notation for the \textit{exceptional adjunction}, which we will define in \autoref{def:exceptional-adjunction}.
\end{warn}

\begin{notation} For any $G$-space $X$, denote by $\pi_X \colon X \to \ast$ the unique map to a point.
\end{notation}

\begin{example}\label{ex:forgetting-along-structure-map} Given a $G$-equivariant vector bundle $E \to X$, applying $(\pi_X)_\sharp$ to the fiberwise Thom space of a vector bundle $\Th_X(E)$ has the effect of collapsing the basepoint copy of $X$, meaning that it glues all the points at infinity together to one. This recovers the ordinary (non-fiberwise) Thom space $\Th(E)$. As a particular case, the trivial bundle $X \times V \to V$ has fiberwise Thom space the fiberwise representation sphere $S^V_X$. The pushforward $(\pi_{X})_{\sharp}S^V_X$ is the half-smash product $X_+ \smashprod S^V$.
\end{example}

\begin{definition} Any $G$-map $f\colon X \to Y$ induces a \textit{pullback} functor
\begin{align*}
    f^\ast \colon \Ret{G}{Y} \to \Ret{G}{X},
\end{align*}
given by sending a retractive space $T$ over $Y$ to the pullback $f^\ast T$ with inclusion and projection maps induced by the pullback:
\[ \begin{tikzcd}
    X\ar[dr,dashed]\ar[drr,bend left=10,"\id" above right]\dar["f" left] & & \\
    Y\ar[dr, bend right=5] & f^\ast T \pb\rar\dar & X\dar["f" right]\\
    & T\rar & Y.
\end{tikzcd} \]
\end{definition}

\begin{proposition} The pullback functor is symmetric monoidal. In particular we observe that $\pi_X^\ast S^V = S^V_X$.
\end{proposition}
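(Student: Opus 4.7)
The plan is to verify the two conditions of symmetric monoidality --- preservation of the unit and preservation of the binary tensor (up to coherent natural isomorphism) --- and then to recover the identity $\pi_X^\ast S^V = S^V_X$ by specialization.

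First, I would verify unit preservation by direct computation. The unit $S^0_Y = Y_{+Y}$ has underlying space $Y \amalg Y$ with the fold projection and inclusion as the second summand. Pullback commutes with disjoint unions, so
\[
    f^\ast(Y_{+Y}) \;=\; X \times_Y (Y \amalg Y) \;\cong\; X \amalg X \;=\; X_{+X} \;=\; S^0_X.
\]
As a byproduct, the zero object $(Y,\id,\id)$ of $\Ret{G}{Y}$ pulls back to the zero object $(X,\id,\id)$ of $\Ret{G}{X}$, a fact that will be reused below.

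Next, for preservation of the binary product, I would appeal to the fact that $f^\ast$ is simultaneously a left and a right adjoint. It is right adjoint to the forgetful $f_\sharp$ of \autoref{def:forgetful-functor-retractive-spaces}, and it further admits a right adjoint $f_\ast$ in the standard parametrized six-functor package (cf.\ \cite{MS,Hu}). Being a right adjoint, $f^\ast$ preserves the fiberwise product $A \times_Y B$; being a left adjoint, it preserves the fiberwise coproduct $A \vee_Y B$ and the zero object. Applying $f^\ast$ to the defining pushout
\[ \begin{tikzcd}
    A \vee_Y B \rar\dar & A \times_Y B \dar\\
    Y \rar & A \smashprod_Y B
\end{tikzcd} \]
therefore produces the pushout defining $f^\ast A \smashprod_X f^\ast B$, yielding a natural isomorphism $f^\ast(A \smashprod_Y B) \cong f^\ast A \smashprod_X f^\ast B$. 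Coherence of this isomorphism with the associator, unitor, and symmetry constraints is then formal, since each constraint is determined by the universal properties of the defining limits and colimits that $f^\ast$ already preserves.

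Finally, for the ``in particular'' assertion, I would view $S^V$ as an object of $\Ret{G}{\ast}$ (a pointed $G$-space). Pulling back along $\pi_X \colon X \to \ast$ gives $X \times_\ast S^V = X \times S^V$, with inclusion sending $x \mapsto (x, \infty)$ and projection onto the first factor --- which is precisely $S^V_X$. The main obstacle, should one wish to avoid invoking the right adjoint $f_\ast$, is checking directly that $f^\ast$ commutes with the specific pushout defining the wedge at a point-set level; but once biadjointness is in hand the entire argument reduces to adjoint formalities and the one-line disjoint union computation above.
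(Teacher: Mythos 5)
The paper leaves this proposition unproved, evidently treating it as routine; your proposal supplies a complete and correct argument. The strategy is the natural one: $f^\ast$ is a right adjoint to $f_\sharp$ (recorded just above in the paper), and in the parametrized framework of \cite{MS} it is also a left adjoint to the sections functor $f_\ast$, so it preserves the pullback defining $A \times_Y B$, the pushouts defining $A \vee_Y B$ and $A \smashprod_Y B$, and the disjoint union defining $S^0_Y$; the ``in particular'' clause drops out of the unit computation by taking $Y = \ast$. One caveat worth flagging: the existence of $f_\ast$ relies on working in a locally Cartesian closed convenient category of spaces (e.g.\ $k$-spaces), which is the implicit ambient setting here --- this is consistent with the paper's own remark that $f_\ast$ may fail to preserve weakly Hausdorff spaces. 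If you prefer not to invoke $f_\ast$ at all, preservation of the relevant colimits can instead be verified directly via extensivity of the category of $k$-spaces (for coproducts) and the fact that pullback in a locally Cartesian closed category preserves arbitrary colimits; either route gives the same conclusion.
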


\begin{proposition} It is straightforward to check that there is an adjunction $f_\sharp \dashv f^\ast$.
\end{proposition}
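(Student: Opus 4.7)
The plan is to exhibit a natural bijection between $\Hom_{\Ret{G}{Y}}(f_\sharp S, T)$ and $\Hom_{\Ret{G}{X}}(S, f^\ast T)$ for $S \in \Ret{G}{X}$ and $T \in \Ret{G}{Y}$, using only the universal properties of the pushout and pullback defining $f_\sharp$ and $f^\ast$. Since both sides will end up being described as the set of $G$-equivariant maps $S \to T$ satisfying the same two compatibility conditions with respect to $f$ and the retractive structures, the bijection will be tautological and naturality will come for free from the functoriality of pushouts and pullbacks.

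First I would unpack the left-hand side. A morphism $f_\sharp S \to T$ in $\Ret{G}{Y}$ is a $G$-equivariant map from the pushout $S \cup_X Y$ to $T$ that commutes with both the structure inclusions from $Y$ and the projections back to $Y$. By the universal property of the pushout, specifying such a map is equivalent to giving a $G$-equivariant map $g\colon S \to T$ together with a $G$-equivariant map $Y \to T$ which restrict to the same map on $X$; but the map $Y \to T$ is forced to be the retractive inclusion of $T$, so the entire data reduce to a map $g\colon S \to T$ satisfying (i) $g$ covers $f$ in the sense that the projection $T \to Y$ post-composed with $g$ equals $f$ composed with the projection $S \to X$, and (ii) $g$ sends the inclusion $X \to S$ to the composite $X \xrightarrow{f} Y \to T$.

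Next I would unpack the right-hand side in the same fashion. A morphism $S \to f^\ast T$ in $\Ret{G}{X}$ is a $G$-equivariant map to the pullback $X \times_Y T$ compatible with the retractive structure over $X$. By the universal property of the pullback, this is equivalent to a pair of $G$-equivariant maps $S \to X$ and $S \to T$ agreeing after postcomposition to $Y$; the map $S \to X$ is forced to be the projection of $S$, and the requirement of compatibility with the inclusion $X \to S$ and with the retraction again amounts exactly to the two conditions (i) and (ii) above. Thus both hom-sets are canonically in bijection with the set of $G$-maps $g\colon S \to T$ satisfying (i) and (ii), which establishes the adjunction on the level of morphism sets.

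I do not anticipate any genuine obstacle here; the only thing to be slightly careful about is the bookkeeping of retractive structure, since one needs to verify that the forced parts of the data on each side (the inclusion $Y \to T$ on the left, and the projection $S \to X$ on the right) match up correctly under the identifications. Once this is checked, naturality in $S$ and $T$ follows automatically because the universal properties of the pushout and pullback are natural, so the bijection assembles into a natural isomorphism of hom-functors and hence an adjunction.
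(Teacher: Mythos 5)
Your proof is correct and carries out exactly the standard verification the paper leaves to the reader (the paper offers no proof, merely asserting the adjunction is "straightforward to check"). Both sides reduce to the same set of $G$-maps $g\colon S \to T$ covering $f$ and respecting the inclusions, precisely as you describe, so there is nothing further to compare.
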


\subsection{Model structures on parametrized $G$-spectra}

We endow the category of retractive $G$-spaces with the $q$\textit{-model structure}, which first appeared in the non-equivariant setting in an unpublished preprint of May \cite{May-ex-spaces}, was fleshed out in the equivariant setting by Hu \cite{Hu}, and was recently made explicit by Malkiewich \cite{Cary}.

Let $f \colon S\to T$ be a map in $\Ret{G}{X}$. We say it is a \textit{weak equivalence} if it is a weak equivalence when viewed as a morphism in $\Spc_G$; explicitly, if $f^H \colon S^H \to T^H$ is a weak homotopy equivalence for every subgroup $H \le G$. Similarly we define $f$ to be a \textit{fibration} if $f^H$ is a Serre fibration for every $H \le G$. Cofibrations in this model structure are given by retracts of relative $G$-cell complexes.

\begin{notation}\label{nota:trivial-bundle} Given any $G$-space $X$, and any complex $G$-representation $V$, denote by $\epsilon_X^V$ the $G$-vector space $X \times V \to X$. We call this the \textit{trivial} bundle associated to the representation $V$.
\end{notation}

\begin{definition} A \textit{genuine orthogonal $G$-spectrum} over $X$ is a sequence of $X$-retractive spaces $A_V$ for each real orthogonal representation $V$, together with the data of structure maps
\begin{align*}
    \epsilon_X^{W-V} \smashprod_X A_V \to A_W
\end{align*}
for every linear $G$-equivariant isometric inclusion $V \hookto W$.
\end{definition}

\begin{notation}\label{nota:GOSX} 
We denote by $\GOS{X}$ the category of genuine orthogonal $G$-spectra parametrized over $X$ (denoted $G\mathcal{O}\mathcal{S}(X)$ in \cite{Cary}). The fiberwise smash product on retractive spaces induces a fiberwise smash product on $\GOS{X}$, and this category is also symmetric monoidal with unit given by the sphere spectrum $\spherespec_X \in \GOS{X}$. When we write $\mathbb{S}$ without a subscript, it is understood we are working fiberwise over a point, so we obtain the equivariant sphere spectrum in $\Sp_G$, but with the group notation suppressed.
\end{notation}

\begin{example} For $S\in \Ret{G}{X}$, we denote by $\Sigma_X^\infty S$ the \textit{suspension spectrum}, with component spaces $S_V = \epsilon_X^V \smashprod_X S$. This is a functorial procedure, giving an adjunction
\begin{align*}
    \Sigma_X^\infty \colon \Ret{G}{X} \leftrightarrows \GOS{X} \noloc \Omega^\infty_X.
\end{align*}
\end{example}

Given $f \colon X \to Y$ the definitions of $f^\ast$ and $f_\sharp$ extend to genuine orthogonal spectra by applying them levelwise (c.f. \cite[\S4.3]{Cary}), yielding an adjunction
\begin{equation}\label{eqn:upperstar-pushf-adjunction}
\begin{aligned}
    f_\sharp \colon  \GOS{X} \leftrightarrows \GOS{Y} \noloc  f^\ast.
\end{aligned}
\end{equation}
Just as in the unstable setting, the pullback $f^\ast$ is symmetric monoidal.

The $q$-model structure we outlined for $\Ret{G}{X}$ can be extended to a model structure on $\GOS{X}$ by defining weak equivalences and fibrations componentwise \cite[Definition~3.3]{Hu}. This forms a closed model structure \cite[Proposition~3.4]{Hu}, \cite[Theorem~1.0.1]{Cary}. We denote by $[-,-]_X$ homotopy classes of maps in $\GOS{X}$.

\begin{proposition} \cite[\S3]{Hu} The adjunction $f_\sharp \dashv f^\ast$ in \autoref{eqn:upperstar-pushf-adjunction} is a Quillen adjunction. 
\end{proposition}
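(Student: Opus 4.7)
The plan is to verify the Quillen adjunction criterion by checking that the right adjoint $f^\ast$ preserves fibrations and acyclic fibrations. Since the $q$-model structure on $\GOS{X}$ has both fibrations and weak equivalences defined componentwise, and $f^\ast$ itself acts componentwise on orthogonal spectra, this reduces immediately to the corresponding statement for the pullback functor $f^\ast \colon \Ret{G}{Y} \to \Ret{G}{X}$ one level at a time. Compatibility with the structure maps of a parametrized spectrum is then automatic from the symmetric monoidality of $f^\ast$ already recorded, which identifies $f^\ast(\epsilon_Y^{W-V} \smashprod_Y A_V)$ with $\epsilon_X^{W-V} \smashprod_X f^\ast A_V$.

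At the retractive-space level, given a morphism $g \colon S \to T$ in $\Ret{G}{Y}$, the induced map $f^\ast g \colon S \times_Y X \to T \times_Y X$ is the base change of $g$ along $f$. First I would take $H$-fixed points for each closed subgroup $H \le G$: because fixed points, being a limit, commute with pullbacks, $(f^\ast g)^H$ is the base change of $g^H$ along $X^H \to Y^H$ in $\mathrm{Top}$. Then I would invoke the classical facts that Serre fibrations and acyclic Serre fibrations in $\mathrm{Top}$ are both stable under pullback --- the latter because trivial Serre fibrations are characterized by the right lifting property against the generating cofibrations, and this lifting property is preserved under arbitrary base change. Together these give the desired preservation, and thus a Quillen adjunction.

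The hard part will not really be homotopical but point-set: one must confirm that $f^\ast T$ actually lies in the ambient category of retractive $G$-spaces with the appropriate topology, that the induced section and retraction are genuinely well-defined, and that nothing goes wrong under the somewhat fragile behavior of limits in parametrized categories flagged in the earlier remark on machinery. These technicalities are already addressed in Hu's framework, so no new obstacles arise for a general equivariant $f$; modulo this bookkeeping, the proof is the direct verification above.
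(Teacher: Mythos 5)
The paper itself offers no proof of this proposition --- it is established by citation to Hu, \S3, so there is nothing internal to compare against. Your direct verification is correct and is the expected argument: check that the right adjoint $f^\ast$ preserves fibrations and acyclic fibrations; reduce from parametrized spectra to retractive spaces using that both the model structure and $f^\ast$ are defined levelwise; reduce from retractive $G$-spaces to $\mathrm{Top}$ by passing to $H$-fixed points, which as a limit commutes with the pullback defining $f^\ast$; and invoke stability of Serre fibrations and acyclic Serre fibrations under base change, the latter via the right lifting property against the generating cofibrations $S^{n-1}\hookrightarrow D^n$.

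Two small remarks. First, the appeal to symmetric monoidality of $f^\ast$ to handle the structure maps is harmless but unnecessary here: that $f^\ast$ is a well-defined levelwise functor on $\GOS{X}$ is already recorded in the paper (following \cite[\S4.3]{Cary}), so only the model-theoretic preservation needs checking. Second, if the relevant model structure on $\GOS{X}$ is taken to be the stable (Bousfield-localized) one rather than the level one, then fibrations carry an extra homotopy-cartesian condition relating adjacent levels; your argument still goes through, since acyclic fibrations are unchanged under left Bousfield localization and $f^\ast$, being a right adjoint, commutes with the fiberwise loop functor and with pullbacks, hence preserves that extra condition. Modulo these points of bookkeeping, the proof is complete.
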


\subsection{Projection and exchange}

Two key techniques frequently used in settings where a six functors formalism appears are a \textit{projection formula} and \textit{exchange transformations}. Projection describes the interaction of the forgetful functor with smash products, while exchange describes how functors channel data from commutative diagrams of base objects (in this case retractive $G$-spaces).

\begin{theorem}\label{thm:projection} \textit{(Projection)} \cite[4.7]{Hu} Let $f\colon X \to Y$ be a $G$-map of spaces, and take $S\in \GOS{X}$ and $T \in \GOS{Y}$. Then there is an isomorphism in $\GOS{Y}$, which is natural in both $S$ and $T$:
\begin{align*}
    T \smashprod_Y f_\sharp(S) \xto{\sim} f_\sharp \left( f^\ast T \smashprod_X S \right).
\end{align*}
\end{theorem}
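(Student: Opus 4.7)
The argument is formal, relying only on the adjunction $f_\sharp \dashv f^\ast$ together with the symmetric monoidality of $f^\ast$ already recorded. Because $f_\sharp$, $f^\ast$, and the fiberwise smash all act levelwise on genuine orthogonal $G$-spectra, the claim in $\GOS{Y}$ reduces to the analogous statement in $\Ret{G}{Y}$; I would prove the latter and then assemble.

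\textbf{Constructing a candidate comparison.} At the level of retractive spaces, the map that arises most naturally runs in the opposite direction to the one displayed. Using the unit $\eta_S\colon S \to f^\ast f_\sharp S$ of the adjunction together with the symmetric monoidality of $f^\ast$, I would form
\[
f^\ast T \smashprod_X S \xto{\id \smashprod \eta_S} f^\ast T \smashprod_X f^\ast f_\sharp S \cong f^\ast(T \smashprod_Y f_\sharp S),
\]
and take its adjoint under $f_\sharp \dashv f^\ast$ to obtain a morphism
\[
\varphi\colon f_\sharp(f^\ast T \smashprod_X S) \to T \smashprod_Y f_\sharp S.
\]
The theorem is equivalent to showing that $\varphi$ is an isomorphism, with the map in the statement being $\varphi^{-1}$.

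\textbf{Verifying $\varphi$ and main obstacle.} To check that $\varphi$ is an isomorphism I would unfold both sides as explicit colimits. Writing $f_\sharp S$ as the pushout $S \cup_X Y$ of retractive $Y$-spaces, and using that $\smashprod_Y$ preserves colimits in each variable (being a left adjoint), both sides decompose into the same fiberwise object: over each $y \in Y$ the fiber assembles from the smash products $T_{f(x)} \smashprod S_x$ for $x \in f^{-1}(y)$, and collapses to the retractive basepoint whenever $y \notin f(X)$. Naturality in $S$ and $T$ is then automatic from naturality of $\eta$ and of the symmetric monoidal structure on $f^\ast$. The main obstacle is the point-set bookkeeping of retractive basepoints under the pushout decomposition of the fiberwise smash — specifically, ensuring that the collapse to $Y$ built into $\smashprod_Y$ is compatible with the pushout defining $f_\sharp$ — which is precisely the diagram chase executed in \cite[\S4]{Hu}.
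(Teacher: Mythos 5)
The paper itself offers no proof of \autoref{thm:projection}; it simply cites \cite[4.7]{Hu}, so there is no in-text argument for you to match. Judged on its own, your strategy is the standard one for projection formulas (construct the comparison via the unit of $f_\sharp \dashv f^\ast$ and monoidality of $f^\ast$, then verify it is an isomorphism fiberwise), and that is indeed what Hu's proof amounts to at its core.

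Two points are worth tightening. First, the reduction to $\Ret{G}{Y}$ is not as immediate as "the fiberwise smash acts levelwise": while $f_\sharp$, $f^\ast$, and smashing a spectrum against a \emph{retractive space} are levelwise, the smash $T \smashprod_Y S$ of two parametrized spectra is a coequalizer/Day-convolution construction, not a levelwise one. To make the reduction honest you should either (a) prove the unstable identity and then observe that all functors involved commute with the colimits defining the stable smash, or (b) work with the external smash over $Y\times X$ and a pullback along the graph of $f$, where the external smash does reduce to levelwise data. Second, the direction of your comparison map is the reverse of the one displayed in the statement; that is harmless since you then prove $\varphi$ is invertible, but you should be explicit that the isomorphism you produce must be a homeomorphism on each level, not merely a continuous bijection — the point-set quotient topology on the double pushout/smash needs the map-out universal property rather than a fiber count. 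With those caveats addressed your argument is correct and essentially recovers the cited result.
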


\begin{example}\label{ex:sharp-pullback-smash} In the case where $S=S^0_X$ is the zero-sphere over $X$, projection takes the form
\begin{align*}
    T \smashprod_Y f_\sharp(S^0_X) \xto{\sim} f_\sharp f^\ast(T).
\end{align*}
That is, applying $f_\sharp f^\ast(-)$ has the effect of smashing fiberwise with $f_\sharp(S^0_X)$.
\end{example}

\begin{theorem} \textit{(Exchange)} \cite[2.2.11]{MS},~\cite[2.2.11]{Cary} For any commutative square of $G$-spaces
\begin{equation}\label{eqn:square-G-spaces}
\begin{aligned}
    \begin{tikzcd}[ampersand replacement=\&]
    A\rar["f"]\dar["g" left] \& B\dar["q" right] \\
    C\rar["p" below] \& D,
\end{tikzcd}
\end{aligned}
\end{equation}
there is an associated \textit{exchange transformation} $\Ex_\sharp^\ast \colon g_\sharp f^\ast \to p^\ast q_\sharp$. This is a weak equivalence if the square is homotopy cartesian.
\end{theorem}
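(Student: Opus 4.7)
The construction of $\Ex_\sharp^\ast$ is formal via the mate/conjugate procedure. The strict commutativity $qf = pg$ induces a canonical natural isomorphism $f^\ast q^\ast \cong g^\ast p^\ast$ of functors $\GOS{D} \to \GOS{A}$, since both sides compute pullback along the common composite $A \to D$. Combining this with the unit $\eta \colon \id \to q^\ast q_\sharp$ of $q_\sharp \dashv q^\ast$ and the counit $\epsilon \colon g_\sharp g^\ast \to \id$ of $g_\sharp \dashv g^\ast$, the exchange transformation is defined as the composite
\begin{align*}
    g_\sharp f^\ast \xto{g_\sharp f^\ast \eta} g_\sharp f^\ast q^\ast q_\sharp \xto{\cong} g_\sharp g^\ast p^\ast q_\sharp \xto{\epsilon p^\ast q_\sharp} p^\ast q_\sharp.
\end{align*}
Naturality of $\Ex_\sharp^\ast$ in the input is automatic from naturality of the unit, counit, and the pullback coherence isomorphism.

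To prove $\Ex_\sharp^\ast$ is a weak equivalence when the square is homotopy cartesian, I would first reduce to a computation on generators. In the stable category $\GOS{B}$, both $g_\sharp f^\ast$ and $p^\ast q_\sharp$ preserve homotopy cofiber sequences: $g_\sharp$ and $q_\sharp$ as left adjoints, and $p^\ast$, $f^\ast$ as right adjoints in a stable setting. By naturality of $\Ex_\sharp^\ast$ and 2-out-of-3 for weak equivalences, an inductive cell-attachment argument reduces the claim to evaluation on cofibrant suspension spectra $\Sigma_B^\infty R_{+B}$ for $R\to B$ a cofibrant $G$-space. Since $f^\ast$ is symmetric monoidal and $q_\sharp$ is applied levelwise, both commute with $\Sigma^\infty$ on plus-type retractive spaces, and the problem further descends to an unstable verification in $\Ret{G}{-}$.

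For such $R \to B$, the two sides compute respectively to the retractive $C$-spaces $(R \times_B A) \cup_A C$ and $(R \cup_B D) \times_D C$, and the exchange map is the natural comparison arising from the universal properties of pullback and pushout. Replacing $p\colon C \to D$ by a fibrant model so that strict pullback along $p$ represents the homotopy pullback and commutes with homotopy pushouts along cofibrations, and using the homotopy cartesian hypothesis in the form $A \xto{\simeq} B \times_D^h C$, one obtains
\begin{align*}
    (R \cup_B D) \times_D C \simeq (R \times_D C) \cup_{B \times_D C} C \simeq (R \times_B A) \cup_A C,
\end{align*}
the first equivalence coming from pullback-along-a-fibration commuting with cofibration pushouts, the second by substituting $B \times_D C \simeq A$. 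The main technical obstacle is precisely the point-set management of cofibrant and fibrant replacements that makes these strict (co)limits model their homotopical counterparts; analogous arguments for non-equivariant and topologically enriched parametrized spectra are carried out in detail in \cite[\S 2.2]{MS} and \cite[Theorem~2.2.11]{Cary}, from which the result for $\GOS{-}$ follows either by direct appeal or by adaptation to the genuine equivariant $q$-model structure established by Hu.
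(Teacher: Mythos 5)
Your construction of $\Ex_\sharp^\ast$ via the mate (Beck--Chevalley) procedure is the standard one and is correct as stated: the composite of $g_\sharp f^\ast \eta$, the coherence isomorphism $f^\ast q^\ast \cong g^\ast p^\ast$, and the counit $\epsilon$ for $g_\sharp \dashv g^\ast$ is exactly how the exchange is built in \cite{MS} and \cite{Cary}. Note however that the paper itself gives no proof of this statement and simply cites those references, so yours is a blind reconstruction of external material rather than something to compare with an internal argument.

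Your reduction-to-generators argument for the equivalence part is the right idea, but a few details need tightening. First, the cofibrant generators of $\GOS{B}$ are not merely unshifted suspension spectra $\Sigma_B^\infty R_{+B}$ but shifted free spectra $F_V(R_{+B})$ for orthogonal $G$-representations $V$; this does not break the argument, since all four functors in play commute with the $F_V$'s, but it should be said. Second, the fact that $g_\sharp$ commutes with $\Sigma^\infty$ on plus-type objects is not automatic -- it is exactly the content of the projection-formula computation that the paper records in the proof of \autoref{prop:kw2}, and it should be invoked explicitly. Third, the identification $q_\sharp(R_{+B}) = R\cup_B D$ is notationally misleading: if $R \to B$ is a space over $B$ (not yet retractive), then $q_\sharp (R_{+B})$ is $R_{+D}$, i.e.\ $R$ with structure map $R\to B\xto{q}D$ and a disjoint copy of $D$; the notation $R\cup_B D$ suggests gluing along the map $R\to B$, which is not what happens. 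With the clean identifications $g_\sharp f^\ast(R_{+B}) = (R\times_B A)_{+C}$ and $p^\ast q_\sharp(R_{+B}) = (R\times_D C)_{+C}$, the exchange map is induced by the canonical comparison $R\times_B A \to R\times_D C$, and the homotopy cartesian hypothesis (plus fibrancy of enough maps) gives the equivalence exactly as you indicate. None of these are conceptual gaps; they are the point-set bookkeeping that \cite[\S2.2]{MS} and \cite{Cary} handle in detail, and you are right to flag the cofibrant/fibrant replacement management as the real work.
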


\subsection{Thom transformations}

If $A\in\Sp$, then its $n$th cohomology groups are defined by $A^n(X) = [X,\Sigma^n A]$, which we may write less concisely as $\left[ -, \Th(\R^n) \smashprod A \right]$ by considering $S^n$ to be the Thom space of a rank $n$ bundle over a point. Passing to the parametrized setting over a base space $X$, it might make sense then, for any vector bundle $E \to X$, to define the $E$\textit{th cohomology group} $A^E(-)$ by $\left[ -, \Th_X(E) \smashprod_X A \right]_{X}$. We will make such a definition in \autoref{sec:cohomology}, but first we explore the process of smashing fiberwise with a Thom space of a vector bundle. This will let us define an invertible endofunctor on $\GOS{X}$ which we call a \textit{Thom transformation}.

\begin{definition}\label{def:thom-transformation-vector-bundle} Let $E \to X$ be a $G$-equivariant vector bundle. We define the associated \textit{Thom transformation}, denoted by $\Sigma_X^E$, to be the endofunctor defined by smashing fiberwise with the fiberwise Thom space of $E$.
\begin{align*}
    \Sigma_X^E : \GOS{X} &\to \GOS{X} \\
    S &\mapsto S \smashprod_X \Th_X(E).
\end{align*}
\end{definition}

\begin{example}\label{ex:thom-transformation-trivial-vb} The simplest example is when $E = \epsilon_X^V$ is the trivial bundle associated to any $G$-representation $V$. Applying the associated Thom transformation yields
\begin{align*}
    \Sigma_X^{\epsilon_X^V}(-) &= \Th_X(\epsilon_X^V) \smashprod_X (-) = (X \times S^V) \smashprod_X (-) =  S^V_X \smashprod_X(-).
\end{align*}
That is, it is the same as suspending by the parametrized $V$-sphere over $X$, which is invertible in the world of parametrized $G$-spectra over $X$. We will use $\Sigma_X^V$ instead of the more cumbersome notation $\Sigma_X^{\epsilon_X^V}$.
\end{example}

\begin{proposition}\label{prop:Thom-splits-SES} The Thom transformations are additive, in the sense that for any short exact sequence of equivariant bundles over $X$:
\begin{align*}
    0 \to A \to B \to C \to 0,
\end{align*}
there is an isomorphism $\Sigma_X^A \Sigma_X^C \cong \Sigma_X^B$, which is unique in the homotopy category.
\end{proposition}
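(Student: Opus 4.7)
The plan is to reduce this to the classical splitting of short exact sequences of equivariant vector bundles combined with the standard identification of the Thom space of a direct sum as the fiberwise smash of Thom spaces.

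First, I would produce an equivariant splitting of the given short exact sequence. Choose any fiberwise inner product on $B$ and average it over the compact Lie group $G$ to obtain a $G$-invariant fiberwise inner product. With respect to this metric, the orthogonal complement $A^\perp \subseteq B$ of the image of $A \hookrightarrow B$ is a $G$-equivariant subbundle, and the composite $A^\perp \hookrightarrow B \twoheadrightarrow C$ is an equivariant isomorphism. This yields an equivariant isomorphism $B \cong A \oplus C$ of vector bundles over $X$.

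Second, I would identify the fiberwise Thom space of a direct sum. Fiberwise over each point $x \in X$ one has the standard identification $S^{A_x \oplus C_x} \cong S^{A_x} \smashprod S^{C_x}$, equivariant with respect to the stabilizer $G_x$, and the constructions of fiberwise one-point compactifications involved in $\Th_X(-)$ are natural enough that these assemble into an equivariant isomorphism $\Th_X(A \oplus C) \cong \Th_X(A) \smashprod_X \Th_X(C)$ in $\Ret{G}{X}$. Passing to $\GOS{X}$ and combining this with the splitting from step one produces the desired isomorphism of endofunctors
\begin{align*}
    \Sigma_X^A \Sigma_X^C(-) = \Th_X(A) \smashprod_X \Th_X(C) \smashprod_X (-) \cong \Th_X(B) \smashprod_X (-) = \Sigma_X^B(-).
\end{align*}

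Third, I would address uniqueness in the homotopy category. The isomorphism $B \cong A \oplus C$ depends on the choice of $G$-invariant metric on $B$, but the space of such metrics is a convex (hence contractible) nonempty subset of the space of fiberwise symmetric bilinear forms on $B$, with nonemptiness witnessed by the averaging construction. Any two choices are thus connected by a path of invariant metrics, producing a homotopy between the resulting isomorphisms $\Sigma_X^A \Sigma_X^C \cong \Sigma_X^B$ in $\GOS{X}$. Hence the isomorphism is well-defined up to homotopy. I do not anticipate a substantive obstacle here: the mathematical content is classical, and the only care required is to ensure that the splitting, the Thom space identification, and the contractibility argument are carried out in a manner genuinely compatible with the $G$-action and the retractive structure over $X$.
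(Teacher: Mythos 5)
Your proof is correct and follows essentially the same route as the paper's: split the sequence equivariantly (the paper simply invokes that equivariant short exact sequences split, while you fill in the averaging-a-metric argument), identify $\Th_X(A\oplus C)\cong\Th_X(A)\smashprod_X\Th_X(C)$, and observe that the space of splittings is contractible. Your elaboration on the convexity of the space of invariant metrics is a reasonable way to make the contractibility claim precise, and no substantive difference in strategy is present.
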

\begin{proof} Since every short exact sequence of equivariant bundles is split, we have an isomorphism $B\cong A \oplus C$, inducing a homeomorphism $\Th_X(B) \cong \Th_X(A) \smashprod_X \Th_X(C)$ (c.f.  \cite[6.2.2]{Cary}). As the choice of such splittings forms a contractible space, it is clear that this isomorphism is well-defined up to homotopy.
\end{proof}

Thom transformations are invertible on $\GOS{X}$ for trivial bundles, which relies on the fact that equivariant vector bundles admit stable inverses.

\begin{proposition}\label{prop:complementary-bundle} \cite[2.4]{Segal} Let $E\to X$ be an equivariant vector bundle. Then there is a representation $V$ and a $G$-bundle $E^\perp \to X$ so that $E \oplus E^\perp \cong \epsilon_X^V$.\footnote{In \cite{Segal} this result is stated only for complex vector bundles, but the same argument found there works for real vector bundles by picking a $G$-equivariant Riemannian metric for $E$ in $\epsilon_X^V$ and defining $E^\perp$ to be its complement.}
\end{proposition}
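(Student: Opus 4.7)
The plan is to follow Segal's argument in \cite{Segal}, with the minor adjustment for the real case indicated in the footnote. The key idea is to embed $E$ as an equivariant subbundle of a trivial bundle, then take orthogonal complements with respect to a $G$-invariant fiber metric.

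First I would construct a $G$-equivariant embedding $E \hookrightarrow \epsilon_X^V$ into the trivial bundle on some finite-dimensional $G$-representation $V$. Around each orbit $G\cdot x\subseteq X$, the slice theorem provides a $G$-invariant tubular neighborhood $U$ on which $E|_U$ is equivariantly isomorphic to an induced bundle built from a $G_x$-representation $W$; this presentation exhibits $E|_U$ as a subbundle of the trivial bundle $U\times \Ind_{G_x}^G W$. Since $X$ is compact, cover it by finitely many such $G$-invariant tubes $U_1,\ldots,U_k$ with associated representations $V_1,\ldots,V_k$, and glue the local embeddings via a $G$-invariant partition of unity $\{\psi_i\}$ subordinate to this cover (obtained by averaging an ordinary partition of unity over $G$). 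The result is an equivariant map $E \to \epsilon_X^V$ into $V = V_1\oplus\cdots\oplus V_k$ whose $x$-fiber is a direct sum of local embeddings weighted by $\psi_i(x)$; fiberwise injectivity is guaranteed because at every point some $\psi_i(x)>0$, and the corresponding local map is a fiberwise injection.

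Next I would choose a $G$-invariant inner product on $V$ --- Hermitian if $E$ is complex, Euclidean if $E$ is real --- which exists by averaging any inner product over the compact group $G$. This induces a $G$-equivariant metric on $\epsilon_X^V$. Defining $E^\perp\subseteq \epsilon_X^V$ as the fiberwise orthogonal complement of the image of $E$, equivariance of both the inclusion and the metric forces $E^\perp$ to be a $G$-subbundle, yielding the desired equivariant splitting $E\oplus E^\perp \cong \epsilon_X^V$.

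The main obstacle I anticipate is the local-to-global gluing step: one needs the slice theorem applied along each orbit, compactness to extract a finite subcover, and a careful check that the partition-of-unity-weighted sum of local embeddings really is a fiberwise injection and not merely a bundle map. The real-versus-complex distinction highlighted in the footnote affects only the final step, where the inner product must be chosen compatibly with the scalar structure --- Segal carries this out for Hermitian metrics, and the Euclidean analogue is identical since averaging over $G$ produces a $G$-invariant positive-definite form in either setting.
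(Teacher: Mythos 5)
Your overall strategy — equivariantly embed $E$ into a trivial bundle and take the orthogonal complement with respect to an averaged invariant metric — matches the argument the paper appeals to; note the paper gives no proof of its own, only a citation to \cite[2.4]{Segal} and a footnote addressing the real case, and that footnote ("picking a $G$-equivariant Riemannian metric for $E$ in $\epsilon_X^V$") presupposes exactly the embedding-then-complement structure you reconstruct.

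There is, however, one genuine gap in the local step. You propose embedding $E|_U$ into $U \times \Ind_{G_x}^G W$, but the paper's standing assumption is that $G$ is a compact Lie group, and when $G_x$ has infinite index in $G$ the induced representation $\Ind_{G_x}^G W$ is infinite-dimensional, so it cannot serve as the ambient $V_i$. For finite $G$ your choice is fine (and that is the setting of the paper's applications), but for general compact Lie $G$ the correct input is the Peter--Weyl fact that every finite-dimensional $G_x$-representation $W$ admits a $G_x$-equivariant embedding $W \hookrightarrow \Res^G_{G_x} V_i$ for some \emph{finite}-dimensional $G$-representation $V_i$; that $V_i$ should replace $\Ind_{G_x}^G W$. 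With that substitution your partition-of-unity gluing, the fiberwise-injectivity check, and the invariant-metric complement all go through as written. Two further remarks worth making explicit: the argument requires $X$ compact (you use this to extract a finite subcover, and it is a standing hypothesis you are inheriting silently from Segal), and the slice-theorem step tacitly uses that $X$ is a $G$-manifold rather than an arbitrary compact $G$-space — Segal's own argument, via a finite-dimensional $G$-invariant subspace of $\Gamma(E)$ surjecting onto all fibers, avoids the slice theorem and applies to any compact $G$-space, so is marginally more general than your route.
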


\begin{proposition}\label{prop:inverse-thom-transform} Let $E \to X$ be a $G$-bundle. Then the Thom transformation $\Sigma_X^E$ admits an inverse at the level of the homotopy category $\Sigma_X^{-E}$, which is defined by $\Sigma_X^{-V}\circ \Sigma_X^{E^\perp}$, for any trivial bundle $\epsilon^V_X$ and complementary bundle $E \oplus E^\perp \cong \epsilon^V_X$.
\end{proposition}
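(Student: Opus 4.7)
The plan is to leverage Segal's theorem (\autoref{prop:complementary-bundle}) and the additivity of Thom transformations (\autoref{prop:Thom-splits-SES}) to reduce everything to the trivial bundle case handled by \autoref{ex:thom-transformation-trivial-vb}.

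First I would fix data $E^\perp$ and $V$ with $E \oplus E^\perp \cong \epsilon_X^V$ supplied by \autoref{prop:complementary-bundle}, and form the candidate inverse $\Sigma_X^{-V} \circ \Sigma_X^{E^\perp}$. Since $\Sigma_X^{-V}$ is literally the inverse of $\Sigma_X^V$ in the homotopy category (as suspension by a representation sphere is invertible on $\GOS{X}$), this is a well-defined endofunctor on $\mathrm{Ho}(\GOS{X})$. Using the short exact sequence $0 \to E \to E \oplus E^\perp \to E^\perp \to 0$ together with \autoref{prop:Thom-splits-SES}, there is an isomorphism $\Sigma_X^E \circ \Sigma_X^{E^\perp} \cong \Sigma_X^{\epsilon_X^V} = \Sigma_X^V$ in the homotopy category, and composing with $\Sigma_X^{-V}$ on either side yields that $\Sigma_X^{-V} \circ \Sigma_X^{E^\perp}$ is both a left and right inverse of $\Sigma_X^E$.

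The one subtlety I anticipate is verifying that the inverse is independent of the auxiliary choices of $V$ and $E^\perp$, since the notation $\Sigma_X^{-E}$ suggests a canonical construction. Given another pair $(V', (E^\perp)')$ with $E \oplus (E^\perp)' \cong \epsilon_X^{V'}$, I would stabilize by summing: taking the direct sum of both splittings with $(E^\perp)'$ and $E^\perp$ respectively and using additivity produces an isomorphism $\Sigma_X^{E^\perp} \circ \Sigma_X^{V'} \cong \Sigma_X^{(E^\perp)'} \circ \Sigma_X^{V}$ in the homotopy category, and composing with $\Sigma_X^{-V} \circ \Sigma_X^{-V'}$ shows that the two candidate inverses agree. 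The uniqueness clause in \autoref{prop:Thom-splits-SES} ensures these identifications are canonical in the homotopy category.

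The main obstacle is largely bookkeeping: the substance of the argument is entirely contained in Segal's existence result and additivity of Thom transformations, so the work is in arranging the isomorphisms coherently and confirming that the construction descends to a well-defined functor on $\mathrm{Ho}(\GOS{X})$ rather than merely a functor depending on auxiliary choices.
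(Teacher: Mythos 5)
Your argument is correct and is essentially the proof the paper gives, which simply cites \autoref{prop:Thom-splits-SES} together with \autoref{prop:complementary-bundle}. The extra paragraph verifying independence of the auxiliary choices $(V, E^\perp)$ is a reasonable and worthwhile addition not spelled out in the paper's one-line proof, though the paper effectively handles it later via the $K$-theoretic uniqueness discussion in \autoref{cor:thom-tr-infinity-functor}.
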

\begin{proof} This follows by combining \autoref{prop:Thom-splits-SES} and \autoref{prop:complementary-bundle}.
\end{proof}

With this notion of Thom transformations associated to virtual bundles, we can extend the definition of Thom transformations to hold for perfect complexes over our domain space. 

\begin{notation} We denote by $\KO_G(X)$ the abelian group of isomorphism classes of virtual real $G$-vector bundles over $X$. Observe there is a natural inclusion $\RO(G) \to \KO_G(X)$ given by sending a representation to its associated trivial bundle.
\end{notation}

\begin{corollary} The Thom transformations induce a group homomorphism from the group of isomorphism classes of virtual complex vector bundles over $X$:
\begin{align*}
    \KO_G(X) &\to \Aut(\Ho(\GOS{X})) \\
    [E] &\mapsto \Sigma_X^E.
\end{align*}
\end{corollary}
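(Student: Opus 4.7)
The plan is to construct the map explicitly on virtual bundles, then verify that it factors through the Grothendieck group and respects the group structure. Given a virtual bundle $\alpha = [E] - [F] \in \KO_G(X)$, I would set
\begin{align*}
    \Sigma_X^{\alpha} := \Sigma_X^E \circ \Sigma_X^{-F},
\end{align*}
where $\Sigma_X^{-F}$ is the autoequivalence produced by \autoref{prop:inverse-thom-transform}. By that proposition this lands in $\Aut(\Ho(\GOS{X}))$, and the inverse of $\Sigma_X^\alpha$ is $\Sigma_X^{-\alpha}$.

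The first real content is well-definedness on $\KO_G(X)$. Two representatives $[E_1] - [F_1]$ and $[E_2] - [F_2]$ agree in $\KO_G(X)$ precisely when there is an equivariant bundle $W \to X$ with an isomorphism $E_1 \oplus F_2 \oplus W \cong E_2 \oplus F_1 \oplus W$. Applying \autoref{prop:Thom-splits-SES} to the split short exact sequences involved yields natural isomorphisms
\begin{align*}
    \Sigma_X^{E_1} \circ \Sigma_X^{F_2} \circ \Sigma_X^{W} \cong \Sigma_X^{E_1 \oplus F_2 \oplus W} \cong \Sigma_X^{E_2 \oplus F_1 \oplus W} \cong \Sigma_X^{E_2} \circ \Sigma_X^{F_1} \circ \Sigma_X^{W},
\end{align*}
in the homotopy category. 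Since $\Sigma_X^W$ is invertible (its inverse being $\Sigma_X^{-W}$ from \autoref{prop:inverse-thom-transform}), we can cancel it on the right, and then precomposing with $\Sigma_X^{-F_1} \circ \Sigma_X^{-F_2}$ (again invertible) yields $\Sigma_X^{E_1} \circ \Sigma_X^{-F_1} \cong \Sigma_X^{E_2} \circ \Sigma_X^{-F_2}$, as required.

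The homomorphism property is then immediate from \autoref{prop:Thom-splits-SES}: for virtual classes $\alpha = [E] - [F]$ and $\beta = [E'] - [F']$, their sum is represented by $[E \oplus E'] - [F \oplus F']$, and unpacking the definition gives $\Sigma_X^{\alpha + \beta} \cong \Sigma_X^{E \oplus E'} \circ \Sigma_X^{-(F \oplus F')} \cong \Sigma_X^\alpha \circ \Sigma_X^\beta$ up to reshuffling invertible factors, which is valid in the commutative group of isomorphism classes of autoequivalences. The only conceivable obstacle is the cancellation step in well-definedness, but invertibility of $\Sigma_X^W$ from \autoref{prop:inverse-thom-transform} makes this step formal, so no coherence issue arises at the level of the homotopy category.
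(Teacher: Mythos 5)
Your argument is correct in substance and is the standard Grothendieck-group universal-property argument; the paper presents this as a corollary without a displayed proof, and your proposal supplies exactly the details one expects, relying on the same inputs (additivity over short exact sequences from \autoref{prop:Thom-splits-SES} and invertibility from \autoref{prop:inverse-thom-transform}).

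One imprecision worth correcting: you justify the reshuffling of factors by appealing to ``the commutative group of isomorphism classes of autoequivalences,'' but $\Aut(\Ho(\GOS{X}))$ is not commutative in general. What you actually need, and what is true, is that the Thom transformations \emph{commute with each other} up to natural isomorphism: for bundles $A, B$ over $X$, \autoref{prop:Thom-splits-SES} gives $\Sigma_X^A \circ \Sigma_X^B \cong \Sigma_X^{A \oplus B} \cong \Sigma_X^{B \oplus A} \cong \Sigma_X^B \circ \Sigma_X^A$, and this extends to the signed transformations $\Sigma_X^{-F}$ since they are built from $\Sigma_X^{F^\perp}$ and a trivial shift. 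So the relevant commutativity lives in the image of the map, not in the ambient automorphism group. With that local fix, both the cancellation of $\Sigma_X^W$ in the well-definedness step and the reshuffling in the homomorphism step go through cleanly.

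Two tiny further remarks. First, the middle isomorphism in your well-definedness chain tacitly uses that isomorphic bundles have naturally homeomorphic fiberwise Thom spaces, hence equal Thom transformations on the homotopy category; this is immediate from the construction but worth stating. Second, the corollary in the paper says ``complex'' but the notation $\KO_G(X)$ was introduced for real bundles --- a discrepancy in the paper, not in your proof, and your argument applies equally in either case.
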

Following Segal \cite[\S3]{Segal}, we define a \textit{complex of $G$-vector bundles} on $X$ to be a sequence of $G$-vector bundles $E_i$ and equivariant vector bundle maps over $X$:
\[
\cdots \xto{d} E_n \xto{d} E_{n-1} \xto{d} \cdots
\]
so that $d^2 = 0$. We say that a complex $E_\bullet$ is \textit{bounded} if 
$E_n = 0$ for $|n|$ sufficiently large. Let $\Perf(\KO_G(X))$ denote the category of \textit{perfect complexes}, meaning those which are quasi-isomorphic to bounded ones. The following definition is inspired by the motivic $J$-homomorphism of \cite{BachmannHoyois}.
\begin{proposition}\label{prop:thom-transform-on-perf} The Thom transformations extend to perfect complexes of vector bundles on $X$:
\begin{align*}
    \Sigma_X^{(-)} \colon \Perf(\KO_G(X)) &\to \Aut(\Ho(\GOS{X})) \\
    \left( \cdots\to E_n \to E_{n-1} \to \cdots \to E_0 \right) &\mapsto \Sigma_X^{(-1)^n E_n} \circ \cdots \circ \Sigma_X^{E_0}.
\end{align*}
\end{proposition}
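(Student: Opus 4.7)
The plan is to recognize the composition $\Sigma_X^{(-1)^n E_n} \circ \cdots \circ \Sigma_X^{E_0}$ as the image of the Euler characteristic class $\chi(E_\bullet) := \sum_n (-1)^n [E_n] \in \KO_G(X)$ under the group homomorphism of the preceding corollary. Once this identification is in hand, the proposition reduces to showing that $\chi$ descends to quasi-isomorphism classes, so that the assignment factors as
\[
\Perf(\KO_G(X)) \xto{\chi} \KO_G(X) \xto{\Sigma_X^{(-)}} \Aut(\Ho(\GOS{X})).
\]

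For a bounded complex $E_\bullet$, the identification $\Sigma_X^{(-1)^n E_n} \circ \cdots \circ \Sigma_X^{E_0} = \Sigma_X^{\chi(E_\bullet)}$ in $\Ho(\GOS{X})$ is immediate from the additivity of Thom transformations along short exact sequences (\autoref{prop:Thom-splits-SES}) together with the existence of inverses (\autoref{prop:inverse-thom-transform}), since both expressions compute the same product in the abelian group $\Aut(\Ho(\GOS{X}))$.

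To extend the assignment to arbitrary perfect complexes, I would select for each $E_\bullet$ a quasi-isomorphism to a bounded complex $F_\bullet$ and set $\Sigma_X^{E_\bullet} := \Sigma_X^{F_\bullet}$. Well-definedness reduces to showing that quasi-isomorphic bounded complexes have the same Euler class in $\KO_G(X)$. Taking the mapping cone of a quasi-isomorphism, this in turn reduces to showing that any bounded acyclic complex of equivariant vector bundles satisfies $\sum_i (-1)^i [E_i] = 0$ in $\KO_G(X)$; inducting on the length then reduces further to the case of a short exact sequence, where this is the defining relation.

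I expect the main obstacle to lie in this inductive step: after truncating an acyclic complex at the first differential, the cycle bundle $Z := \ker(d_1) \cong \im(d_1)$ need not a priori be a $G$-equivariant vector bundle, since the rank of an equivariant bundle morphism can jump across fibers. The standard fix I would adopt is the stabilization argument from \cite[\S3]{Segal}: tensor with an auxiliary equivariant bundle so that the relevant kernels become locally free, then transport the resulting relation back to the original using the existing well-definedness of $\Sigma_X^{(-)}$ on virtual bundles. Functoriality under compositions of complexes is then inherited automatically from the group structure on $\KO_G(X)$.
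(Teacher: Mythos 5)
Your proposal takes essentially the same route as the paper: both reduce the proposition to the purely K-theoretic statement that the Euler characteristic $\chi(E_\bullet) = \sum_n (-1)^n[E_n] \in \KO_G(X)$ is a quasi-isomorphism invariant, after which \autoref{prop:Thom-splits-SES} and \autoref{prop:inverse-thom-transform} identify $\Sigma_X^{(-1)^nE_n} \circ \cdots \circ \Sigma_X^{E_0}$ with $\Sigma_X^{\chi(E_\bullet)}$. The technical execution differs slightly: the paper directly rewrites $\Sigma_X^{A_\bullet}$ as an alternating product of Thom transformations of the homology bundles $H_n(A)$, while you pass to the cone of a quasi-isomorphism and induct over acyclic complexes; these are two standard phrasings of the same additivity principle, and yours is arguably cleaner because the factorization through $\KO_G(X)$ is made explicit. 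The genuinely valuable addition is your flag of the subtlety: the paper's proof writes short exact sequences $0 \to \ker(d_n^A) \to A_n \to \im(d_n^A) \to 0$ and passes to $H_n(A)$ without comment, tacitly assuming that cycles, boundaries, and homology of a complex of $G$-vector bundles are themselves $G$-vector bundles, which fails when ranks jump across fibers. You are right that this needs Segal's stabilization machinery, though do note that tensoring with an auxiliary bundle will not on its own force kernels to become subbundles (the rank of $f \otimes \mathrm{id}_G$ still jumps wherever the rank of $f$ does); the argument in \cite[\S3]{Segal} is closer to perturbing the differential, or working modulo elementary acyclic summands of the form $0 \to G \xto{\mathrm{id}} G \to 0$, so the relevant maps become locally split. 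That caveat aside, your reduction is sound and improves on the level of care in the published argument.
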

\begin{proof} It will suffice to show that the definition above is well-defined on quasi-isomorphism classes of bounded complexes. Suppose $f_\bullet \colon A_\bullet \to B_\bullet$ is a quasi-isomorphism of complexes. Considering the differential $d_n^A \colon A_n \to A_{n-1}$, we have a short exact sequence
\begin{align*}
    0 \to \ker(d_n^A) \to A_n \to \im(d_n^A) \to 0,
\end{align*}
which by \autoref{prop:Thom-splits-SES} induces an isomorphism
\begin{align*}
    \Sigma_X^{(-1)^n A_n} \cong \Sigma_X^{(-1)^{n+1}\ker(d_n^A)} \Sigma_X^{(-1)^{n+1}(\im(d_n^A))}.
\end{align*}
Since $\Sigma_X^{\ker(d_n^A) - \im(d_{n+1}^A)} \cong \Sigma_X^{H_n(A)}$, we observe that
\begin{align*}
    \Sigma_X^{A_\bullet} \cong \sum_n \Sigma_X^{(-1)^{n+1} H_n(A)}.
\end{align*}
As $A$ and $B$ are quasi-isomorphic, we conclude that $\Sigma_X^{A_\bullet}\cong \Sigma_X^{B_\bullet}$.
\end{proof}

\begin{corollary}\label{cor:thom-tr-infinity-functor} $\Perf(\KO_G(X))$ is a stable $\infty$-category, hence we can construct its connective algebraic $K$-theory $\algK(\Perf(\KO_G(X)))$. Any path (zig-zag of quasi-isomorphisms) in this space between two points $E_\bullet,F_\bullet\in\Perf(\KO_G(X))$ induces a canonical natural equivalence $\Sigma_X^{E_\bullet} \cong \Sigma_X^{F_\bullet}$. Hence \autoref{prop:thom-transform-on-perf} can be thought of as an $\infty$-categorical group homomorphism.
\end{corollary}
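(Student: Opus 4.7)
The plan is to verify the three assertions of the corollary in turn. For stability of $\Perf(\KO_G(X))$: equivariant vector bundles over $X$ form an additive category with a natural exact structure coming from short exact sequences of bundles, so standard constructions (for example, the dg-nerve of bounded chain complexes localized at quasi-isomorphisms, or Waldhausen's $S_\bullet$ construction) produce a stable $\infty$-category of perfect complexes, whose connective algebraic $K$-theory spectrum $\algK(\Perf(\KO_G(X)))$ is thereby well-defined.

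The heart of the argument is to upgrade \autoref{prop:thom-transform-on-perf} to a coherent assignment. A single quasi-isomorphism $f\colon A_\bullet \to B_\bullet$ produces an equivalence $\Sigma_X^{A_\bullet}\cong \Sigma_X^{B_\bullet}$ by the computation already carried out in the proof of \autoref{prop:thom-transform-on-perf}: one passes from $\Sigma_X^{A_\bullet}$ to a composition of transformations indexed by the homology $H_n(A_\bullet)$, applies $H_n(f)$ to transit to $H_n(B_\bullet)$, and then reassembles $\Sigma_X^{B_\bullet}$. Each of the outer steps involves choosing a splitting of the short exact sequence $0 \to \ker d_n \to A_n \to \im d_n \to 0$; by \autoref{prop:complementary-bundle} together with the contractibility of the space of $G$-invariant inner products, the space of such splittings is contractible. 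Hence each individual equivalence is well-defined up to contractible choice, and concatenating along any zig-zag of quasi-isomorphisms yields a natural equivalence that is canonical.

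To recognize this as an $\infty$-categorical group homomorphism, one packages the assignment $E_\bullet \mapsto \Sigma_X^{E_\bullet}$ as a map of $\infty$-groupoids from the core of $\Perf(\KO_G(X))$ to $\Aut(\Ho(\GOS{X}))$ (viewed as a group under composition), which is additive by \autoref{prop:Thom-splits-SES}. The universal property of connective algebraic $K$-theory --- that it is the initial $E_\infty$-group receiving such an additive map from the core of a stable $\infty$-category --- then provides the desired factorization through $\algK(\Perf(\KO_G(X)))$.

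The main obstacle is the coherence statement itself: ensuring that different choices of splittings along a zig-zag, and different zig-zag representatives of the same path in the $K$-theory space, produce homotopic natural equivalences. This is precisely what the contractibility of splitting spaces combined with \autoref{prop:Thom-splits-SES} is designed to give, but a fully rigorous argument requires either an explicit $\infty$-categorical enhancement of the target $\Aut(\Ho(\GOS{X}))$ or a treatment through the motivic $J$-homomorphism framework of \cite{BachmannHoyois} that inspired the construction.
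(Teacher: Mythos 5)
The paper states this corollary without any proof, so there is no argument of record to compare against; it is invoked once, in \autoref{prop:cotangent-composite}, to convert a distinguished triangle of cotangent complexes into a weak equivalence. Your reconstruction supplies the most plausible reading of the unwritten argument: stability of perfect complexes is standard, the contractible space of equivariant splittings (the same fact underlying \autoref{prop:Thom-splits-SES}) attaches a preferred natural equivalence to each quasi-isomorphism in a zig-zag, and a universal property of $K$-theory is supposed to assemble these into a map of $\infty$-groups.

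The gap you flag in your final paragraph is genuine, and it is worth spelling out why. The codomain $\Aut(\Ho(\GOS{X}))$ is by definition a discrete group --- its elements are isomorphism classes of autoequivalences of a triangulated homotopy category --- so it has no $2$-morphisms available to receive the data of a path in $\algK(\Perf(\KO_G(X)))$, and "canonical natural equivalence" is not something the target as written can record. Making the statement precise (and usable in \autoref{prop:cotangent-composite}, which needs to extract an actual weak equivalence from a triangle) requires replacing $\Aut(\Ho(\GOS{X}))$ by the Picard space $\Pic(\GOS{X})$, or by the $\infty$-groupoid of autoequivalences of an $\infty$-categorical model of $\GOS{X}$, and constructing $E_\bullet \mapsto \Sigma_X^{E_\bullet}$ directly as a map of spaces rather than as a set-level assignment that is then checked to be well-defined. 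Your contractibility-of-splittings observation shows each individual edge identification is unique up to homotopy --- the correct raw input --- but uniqueness of each edge up to homotopy does not automatically assemble into a coherent map out of the $K$-theory space; building that coherence is precisely the content of a $J$-homomorphism in the style of \cite{BachmannHoyois}, which you correctly identify as the missing machinery. So the proposal reaches the same level of rigor as the paper's unwritten proof and diagnoses accurately what is incomplete, but it does not close the coherence gap that the word "$\infty$-categorical" demands.
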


Finally, we discuss how Thom transformations behave along base change.

\begin{proposition}\label{prop:thom-tr-pullback-pushf} Given $f \colon X \to Y$ and $\xi\in\Perf(\KO_G(Y))$, we have weak equivalences which are natural in $\xi$:
\begin{align*}
    \Sigma_Y^\xi f_\sharp &\xto{\sim} f_\sharp \Sigma_X^{f^\ast \xi}\\
    f^\ast \Sigma_Y^\xi &\xto{\sim} \Sigma_X^{f^\ast \xi} f^\ast.
\end{align*}
\end{proposition}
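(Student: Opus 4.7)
The plan is to reduce to the case where $\xi$ is a single equivariant vector bundle $E\to Y$, then extend to virtual bundles via \autoref{prop:inverse-thom-transform} and finally to perfect complexes via the inductive description in \autoref{prop:thom-transform-on-perf}. A preliminary ingredient I would establish first is the natural identification $f^\ast \Th_Y(E) \cong \Th_X(f^\ast E)$ of retractive $G$-spaces over $X$. Since the fiberwise Thom space is built fiberwise as a one-point compactification, and pullback is symmetric monoidal and preserves the fiberwise pushout defining $\Th_Y(E)$, the defining square for $\Th_Y(E)$ pulls back to that of $\Th_X(f^\ast E)$.

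With this in hand, the first equivalence for a single bundle $E$ is a direct application of the projection formula (\autoref{thm:projection}) with $T = \Th_Y(E)$:
\begin{align*}
\Sigma_Y^E f_\sharp(S) \;=\; \Th_Y(E) \smashprod_Y f_\sharp(S) \;&\xto{\sim}\; f_\sharp\bigl(f^\ast \Th_Y(E) \smashprod_X S\bigr) \\
&\cong\; f_\sharp\bigl(\Th_X(f^\ast E) \smashprod_X S\bigr) \;=\; f_\sharp \Sigma_X^{f^\ast E}(S),
\end{align*}
naturally in $S$. The second equivalence follows from the symmetric monoidality of $f^\ast$:
\begin{align*}
f^\ast \Sigma_Y^E(S) \;=\; f^\ast\bigl(\Th_Y(E) \smashprod_Y S\bigr) \;\simeq\; f^\ast \Th_Y(E) \smashprod_X f^\ast S \;\cong\; \Sigma_X^{f^\ast E}(f^\ast S).
\end{align*}

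To pass from an honest bundle to a virtual bundle, I would invoke \autoref{prop:inverse-thom-transform} to write $\Sigma_Y^{-E}$ in terms of a trivial bundle and a complementary bundle $E^\perp$ with $E\oplus E^\perp \cong \epsilon_Y^V$, and note that $f^\ast$ preserves trivial bundles ($f^\ast \epsilon_Y^V = \epsilon_X^V$) and commutes with direct sums, so inverting the bundle-level equivalences yields their analogues for $\Sigma_Y^{-E}$. Iterating, for a perfect complex $\xi = E_\bullet$ the transformation $\Sigma_Y^\xi$ decomposes as an alternating composition of single-bundle Thom transformations by \autoref{prop:thom-transform-on-perf}, and applying the single-bundle equivalences factor-by-factor assembles into the desired natural weak equivalences. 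Naturality in $\xi$ reduces to naturality of the projection formula and of the monoidal structure at each step, combined with the well-definedness of $\Sigma^{(-)}$ on perfect complexes from \autoref{cor:thom-tr-infinity-functor}.

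The main obstacle I expect is verifying that the bundle-by-bundle equivalences are compatible with the short-exact-sequence splittings used in the well-definedness proof of $\Sigma^{(-)}$ on perfect complexes, so that the equivalences descend coherently along quasi-isomorphisms rather than merely being defined up to noncanonical choices. This should reduce to the naturality of projection and of the monoidal structure on $f^\ast$ with respect to split short exact sequences of equivariant vector bundles, i.e., to the homotopy-uniqueness statement in \autoref{prop:Thom-splits-SES}, and thus to formal manipulations within the homotopy category.
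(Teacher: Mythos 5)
Your proof is correct and takes essentially the same approach as the paper: reduce to the case of a single vector bundle and obtain the first equivalence from the projection formula (\autoref{thm:projection}), using the identification $f^\ast \Th_Y(E)\cong \Th_X(f^\ast E)$. The only minor difference is that you derive the second equivalence directly from the symmetric monoidality of $f^\ast$, whereas the paper obtains it as the mate of the first after replacing $\xi$ by $-\xi$; both routes are equally valid, and your more explicit treatment of the passage from honest bundles to virtual bundles to perfect complexes fills in details the paper dispatches with a one-line remark.
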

\begin{proof} We may assume without loss of generality that $\xi$ is a vector bundle over $Y$, since this equivalence will directly extend to perfect complexes. In that case, the first map is an example of the purity theorem \autoref{thm:projection}. The second map is defined to be the mate of the first (after swapping the sign on $\xi$), and we observe it is a natural equivalence.
\end{proof}

\subsection{Cotangent complexes and duality} One of the key constructions in \cite{Hu} is that of a dualizing object $C_f$ associated to a class of morphisms in $\Spc_G$ called \textit{smooth proper families of $G$-manifolds}. We discuss cotangent complexes for both closed immersions and smooth proper families in the language of the Thom transformation of a \textit{cotangent complex} $\mathcal{L}_f \in \Perf(\KO_G(X))$.

\begin{definition}\label{def:smooth-proper-family} A $G$-map $f\colon X \to Y$ is said to be a \textit{smooth proper family of $G$-manifolds} if the fiber over every point is a smooth proper $G$-manifold, varying continuously over $Y$. Here ``proper'' means that the homotopy fibers are compact \cite{ABG}.
\end{definition}

\begin{remark} Duality for parametrized spectra can be checked fiberwise, in the sense that a parametrized $X$-spectrum is dualizable if and only if its fiber over every point in the base is a dualizable spectrum (e.g. a finite spectrum) \cite[Lemma~4.2]{ABG}. The conditions in \autoref{def:smooth-proper-family} imply that $f_\sharp \spherespec_Y$ will be an invertible spectrum over $Y$, and the analogous statement is true equivariantly \cite{Hu}.
\end{remark}

\begin{definition}\label{def:smoothable-proper} We define a map of smooth compact $G$-manifolds $f\colon X \to Y$ to be \textit{smoothable proper} if it admits a factorization
\begin{equation}\label{eqn:lci-factorization}
\begin{aligned}
    \begin{tikzcd}[ampersand replacement=\&]
    X\rar[hook,"i" above]\ar[dr,"f" below left] \& W\dar["\pi" right]\\
     \& Y,
\end{tikzcd}
\end{aligned}
\end{equation}
where $i$ is a closed $G$-embedding and $\pi$ is a smooth proper family of $G$-manifolds.
\end{definition}

Given such a factorization, consider the following two short exact sequences, the first of bundles over $X$ and the second of bundles over $W$:
\begin{equation}\label{eqn:bundles-SES}
\begin{aligned}
   0 \to TX \to i^\ast TW \xto{(1)} Ni \to 0 \\
    0 \to T\pi \to TW \to \pi^\ast TY \to 0. 
\end{aligned}
\end{equation}
Since $i^\ast$ is exact, we can apply $i^\ast$ to the second sequence to obtain
\begin{equation}\label{eqn:bundles-SES-2}
\begin{aligned}
    0 \to i^\ast T\pi \xto{(2)} i^\ast TW \to f^\ast TY \to 0.
\end{aligned}
\end{equation}
This yields a composite
\begin{align*}
    i^\ast T\pi \xto{(1)\circ (2)} Ni.
\end{align*}

\begin{definition} Let $f\colon X  \to Y$ be smoothable proper with factorization $f = \pi\circ i$. Define the \textit{cotangent complex} of $f$ to be the two term complex
\begin{align*}
    \mathcal{L}_f := \left(\cdots \to 0  \to i^\ast T\pi \to Ni\right),
\end{align*}
where $i^\ast T\pi$ is in degree zero and $Ni$ in degree negative one.
\end{definition}

\begin{proposition}\label{prop:cotangent-complex-well-defined} The cotangent complex yields a Thom transformation $\Sigma_X^{\mathcal{L}_f}$ associated to any smoothable proper morphism $f$, which gives a well-defined functor on the homotopy category.
\end{proposition}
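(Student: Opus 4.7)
My approach is to show that the class $[\mathcal{L}_f] \in \KO_G(X)$ is independent of the chosen factorization $f = \pi \circ i$, and then to invoke \autoref{prop:thom-transform-on-perf} together with \autoref{cor:thom-tr-infinity-functor} to upgrade this K-theoretic equality into a canonical natural equivalence of Thom transformations on $\Ho(\GOS{X})$.

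The decisive calculation is short. The Euler characteristic of the two-term complex $\mathcal{L}_f$ is $[i^\ast T\pi] - [Ni]$ in $\KO_G(X)$. From the first short exact sequence in \eqref{eqn:bundles-SES} we read $[Ni] = [i^\ast TW] - [TX]$. Applying the exact functor $i^\ast$ to the second sequence in \eqref{eqn:bundles-SES} produces \eqref{eqn:bundles-SES-2}, yielding $[i^\ast T\pi] = [i^\ast TW] - [f^\ast TY]$. Subtracting, the $[i^\ast TW]$ contributions cancel and we obtain
\begin{align*}
[\mathcal{L}_f] = [TX] - [f^\ast TY] \in \KO_G(X),
\end{align*}
an expression visibly independent of the auxiliary data $(W, i, \pi)$.

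To promote this K-theoretic identity to a canonical equivalence of Thom transformations, I would compare any two factorizations $f = \pi \circ i = \pi' \circ i'$ through a common refinement. The fiber product $W \times_Y W'$ is again a smooth proper family of $G$-manifolds over $Y$, since its fibers are products of smooth proper $G$-manifolds, and the diagonal-type map $(i,i') \colon X \to W \times_Y W'$ is an equivariant closed embedding, giving a third smoothable proper factorization through which both originals compare via the canonical projections. Applying the calculation above to each of the three factorizations produces a zig-zag of quasi-isomorphisms between the three cotangent complexes, equivalently a path in the K-theory space $\algK(\Perf(\KO_G(X)))$. By \autoref{cor:thom-tr-infinity-functor} such a path induces a canonical natural equivalence between the associated Thom transformations, so $\Sigma_X^{\mathcal{L}_f}$ descends to a well-defined functor on $\Ho(\GOS{X})$.

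The main obstacle is coherence rather than the underlying numerical identity. The K-theoretic equality follows with almost no work from the structural sequences of a smoothable proper factorization; what requires more care is producing a \emph{canonical} zig-zag of quasi-isomorphisms, rather than merely an abstract isomorphism class. As invoked in the proof of \autoref{prop:Thom-splits-SES}, the spaces of splittings of the relevant short exact sequences are contractible, which is precisely what makes \autoref{cor:thom-tr-infinity-functor} applicable and delivers a well-defined Thom transformation on the homotopy category.
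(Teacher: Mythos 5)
Your core calculation — applying $i^\ast$ to the second sequence in \eqref{eqn:bundles-SES}, combining with the first, and cancelling $[i^\ast TW]$ to land on $[\mathcal{L}_f] = [TX] - [f^\ast TY]$ — is exactly the one the paper carries out. Where you diverge is in how this identity is promoted to well-definedness on $\Ho(\GOS{X})$. The paper simply observes that the formula $[TX] - [f^\ast TY]$ is intrinsic to $f$, so $\Sigma_X^{TX}\Sigma_X^{-f^\ast TY}$ furnishes a factorization-independent \emph{model} of the Thom transformation, and stops there. You instead pursue a common-refinement strategy: form $W \times_Y W'$, embed $X$ via $(i,i')$, and compare both factorizations to this third one by a zig-zag of quasi-isomorphisms, then invoke \autoref{cor:thom-tr-infinity-functor}. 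This is a sound and arguably more careful route — it addresses the coherence of the identifications, not just their existence — but it leaves the decisive step (that the projections $W \times_Y W' \to W$, $W'$ actually induce chain maps of the two-term cotangent complexes which are quasi-isomorphisms) asserted rather than verified, as you yourself flag. Filling that in would require chasing the maps $T\pi'' \to p_1^\ast T\pi$ and $Ni'' \to Ni$ through the two structural sequences and checking compatibility with the identifications of homology; the paper sidesteps this entirely by exhibiting the intrinsic model. Both routes are legitimate; yours buys a more canonical comparison at the cost of a coherence check the paper doesn't need.
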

\begin{proof} Given any factorization as in \autoref{eqn:lci-factorization}, we may use the short exact sequences in \autoref{eqn:bundles-SES} and \autoref{eqn:bundles-SES-2} to derive equations in $\KO_G(X)$:
\begin{align*}
    [i^\ast T\pi] &= [i^\ast TW] - [f^\ast TY] \\
    [Ni] &= [i^\ast TW] - [TX].
\end{align*}
From this we may observe that the class of the cotangent complex can be described of the difference $[TX] - [f^\ast TY]$. In other words, there is an isomorphism
\begin{align*}
    \Sigma_X^{i^\ast T\pi} \Sigma_X^{-Ni} \cong \Sigma_X^{TX} \Sigma_X^{f^\ast TY}.
\end{align*}
This provides a model of the Thom transformation of the cotangent complex which is independent of the choice of factorization.
\end{proof}

\begin{example} The Thom transformation associated to the projection map $\pi_M \colon M \to \ast$, where $M$ is any smooth compact manifold, is $\Sigma_M^{\mathcal{L}_{\pi_M}} \cong \Sigma_M^{TM}$.
\end{example}

\begin{remark} For a smoothable proper morphism $f\colon X \to Y$, the invertible spectrum $\Sigma_X^{\mathcal{L}_f} \spherespec_X$ is its associated \textit{dualizing object}. In the setting where $f\colon X \to Y$ is a smooth proper family of $G$-manifolds, $\Sigma_X^{\mathcal{L}_f} \spherespec_X$ agrees with Hu's dualizing object $C_f$ as hinted at in the discussion \cite[pp.42---43]{Hu}, where $C_f = \Sigma_X^{Tf} \spherespec_X$ is the Thom space of the relative tangent bundle $Tf = T_{X/Y}$. An illuminating discussion illustrating this example was laid out in \cite[\S4.3]{ABG}.
\end{remark}

\begin{example}\label{ex:cotangent-complexes} Let $f\colon X \to Y$ be a closed $G$-embedding. Then its dualizing object is the fiberwise Thom space of its inverse normal bundle $\Th_X\left(-Nf\right)$.
\end{example}

\begin{example} Let $s\colon X \to E$ denote the zero section of a vector bundle. By \autoref{ex:cotangent-complexes} its cotangent complex is $Ns[-1]$, and we see that its normal bundle is precisely $E$, so its dualizing object is $\Sigma_X^{-E}\spherespec_X$.
\end{example}

\begin{proposition}\label{prop:cotangent-composite} Let $f \colon X \to Y$ and $g \colon Y \to Z$ be two composable smoothable proper morphisms. Then there is a natural isomorphism of functors from $\Ho(\GOS{X})$ to $\Ho(\GOS{Z})$:
\begin{align*}
    \Sigma_X^{\mathcal{L}_{g\circ f}} \cong \Sigma_X^{\mathcal{L}_f} \circ \Sigma_X^{f^\ast \mathcal{L}_g}.
\end{align*}
Here $f^\ast \mathcal{L}_g$ is defined by pulling back the two-term chain complex $\mathcal{L}_g$ along $f$.
\end{proposition}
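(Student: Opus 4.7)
The plan is to reduce the claim to an equality of $\KO_G(X)$-classes and then invoke the additivity of the Thom transformation functor established in \autoref{prop:Thom-splits-SES}, \autoref{prop:thom-transform-on-perf}, and \autoref{cor:thom-tr-infinity-functor}.

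The key input is the identity extracted from the proof of \autoref{prop:cotangent-complex-well-defined}: for any smoothable proper morphism $h \colon A \to B$, the class of its cotangent complex in $\KO_G(A)$ satisfies $[\mathcal{L}_h] = [TA] - [h^\ast TB]$. Applying this to $f$, to $g$, and to $g \circ f$, and using that pullback along $f$ is a homomorphism $\KO_G(Y) \to \KO_G(X)$ with $[f^\ast \mathcal{L}_g] = [f^\ast TY] - [f^\ast g^\ast TZ]$, a telescoping computation yields
\begin{align*}
    [\mathcal{L}_{g \circ f}] = [TX] - [f^\ast g^\ast TZ] = [\mathcal{L}_f] + [f^\ast \mathcal{L}_g].
\end{align*}
Since $\Sigma_X^{(-)}$ is a homomorphism from $\Perf(\KO_G(X))$ to $\Aut(\Ho(\GOS{X}))$, this gives an isomorphism $\Sigma_X^{\mathcal{L}_{g \circ f}} \cong \Sigma_X^{\mathcal{L}_f} \circ \Sigma_X^{f^\ast \mathcal{L}_g}$ in the homotopy category. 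To upgrade this to a natural isomorphism of functors, I would invoke \autoref{cor:thom-tr-infinity-functor}: the agreement of classes above lifts to a zig-zag of quasi-isomorphisms assembled from the split short exact sequences \autoref{eqn:bundles-SES} and \autoref{eqn:bundles-SES-2} applied to factorizations of $f$, of $g$, and of $g \circ f$, and this zig-zag induces the desired canonical natural equivalence.

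The main obstacle is producing a smoothable proper factorization of the composite $g \circ f$ from given factorizations $f = \pi_f \circ i_f$ and $g = \pi_g \circ i_g$, so that $\mathcal{L}_{g \circ f}$ is even defined. The natural candidate is to take the closed embedding $X \to W_f \times W_g$ sending $x \mapsto (i_f(x), i_g(f(x)))$, and to project $W_f \times W_g \to W_g \to Z$; since $W_f$ is compact (its fibers over the compact manifold $Y$ are compact), the fibers of the projection map are smooth compact $G$-manifolds, so this is a smooth proper family. Granting this factorization, the spaces of splittings of the short exact sequences involved are contractible, so the ambiguity in the zig-zag is itself contractible and the resulting natural isomorphism is canonical.
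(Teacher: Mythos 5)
Your proof takes essentially the same route as the paper: both ultimately appeal to \autoref{cor:thom-tr-infinity-functor}, producing the isomorphism from a path in the $K$-theory space of $\Perf(\KO_G(X))$. The paper simply posits the transitivity distinguished triangle $\mathcal{L}_f \to \mathcal{L}_{g\circ f} \to f^\ast \mathcal{L}_g$ and reads off the path from it, whereas you first verify the identity at the level of $\KO_G(X) = K_0$ by the telescoping computation and then assert that this lifts to a zig-zag of quasi-isomorphisms. These are the same idea packaged differently; the distinguished triangle is exactly the zig-zag you are gesturing at, and positing it directly is slightly cleaner because the $K_0$ computation alone only gives an isomorphism in the homotopy category, not a canonical one, as you yourself note. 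Neither your write-up nor the paper's actually exhibits the triangle/zig-zag from the chosen factorizations — both leave that to the reader — so you are no more and no less rigorous at the key step. Your added discussion of how to produce a smoothable proper factorization of $g \circ f$ from factorizations of $f$ and $g$ is a genuine point the paper silently assumes; the construction you give (embed $X$ into $W_f \times W_g$ via $x \mapsto (i_f(x), i_g(f(x)))$ and project to $Z$) is correct, using compactness of $W_f$ to see that $W_f \times W_g \to Z$ is a smooth proper family.
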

\begin{proof} We observe that there is a distinguished triangle
\begin{align*}
    \mathcal{L}_f \to \mathcal{L}_{g\circ f} \to f^\ast \mathcal{L}_g.
\end{align*}
This yields a path in $K$-theory, which induces a canonical weak equivalence by \autoref{cor:thom-tr-infinity-functor}.
% Using the formula for the Thom transformation of the cotangent complex as in the proof of \autoref{prop:cotangent-complex-well-defined}, we may write
% \begin{align*}
%     \Sigma_X^{\mathcal{L}_{g\circ f}} \cong \Sigma_X^{TX} \circ \Sigma_X^{-f^\ast g^\ast TZ} \cong \left( \Sigma_X^{TX} \circ \Sigma_X^{- f^\ast TY} \right)\circ \left( \Sigma_X^{f^\ast TY} \circ \Sigma_X^{- f^\ast g^\ast TZ} \right).
% \end{align*}
% From this the result follows.
\end{proof}

\subsection{The exceptional adjunction}

Using cotangent complexes and their associated Thom transformations, we can build the exceptional adjunction.

\begin{definition}\label{def:exceptional-adjunction} Let $f\colon X \to Y$ be smoothable proper. Define the \textit{exceptional functors} by
\begin{align*}
    f_! := f_\sharp \Sigma_X^{- \mathcal{L}_f} \colon \GOS{X} \leftrightarrows \GOS{Y} \noloc  \Sigma_X^{\mathcal{L}_f} f^\ast =: f^!
\end{align*}
\end{definition}

It is direct from the definition that these define adjoint functors.

\begin{proposition}\label{prop:open-embedding} If $f\colon X \to Y$ is an open embedding of smooth $G$-manifolds, then the cotangent complex is trivial, hence $f^\ast \simeq f^!$ and $f_\sharp \simeq f_!$
\end{proposition}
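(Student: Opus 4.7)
The plan is to establish that the cotangent complex $\mathcal{L}_f$ of an open embedding has trivial class in $\KO_G(X)$, from which the two desired equivalences follow immediately from the definitions via \autoref{cor:thom-tr-infinity-functor} and \autoref{def:exceptional-adjunction}.

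First I would observe that since $f\colon X \to Y$ is an open embedding of smooth $G$-manifolds, it is in particular a local diffeomorphism. Consequently the differential $df$ defines a fiberwise linear $G$-equivariant isomorphism of vector bundles $TX \xto{\sim} f^\ast TY$ over $X$, and in the group $\KO_G(X)$ we have the identity $[TX] = [f^\ast TY]$.

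Next I would invoke the calculation carried out in the proof of \autoref{prop:cotangent-complex-well-defined}, which identifies the class of the cotangent complex in $\KO_G(X)$ as the difference $[\mathcal{L}_f] = [TX] - [f^\ast TY]$, independent of choice of factorization. By the previous step this class vanishes, and hence by \autoref{cor:thom-tr-infinity-functor} there is a canonical natural equivalence $\Sigma_X^{\mathcal{L}_f} \simeq \id$ of endofunctors of $\Ho(\GOS{X})$ (equivalently $\Sigma_X^{-\mathcal{L}_f}\simeq \id$). Substituting into \autoref{def:exceptional-adjunction} gives
\begin{align*}
    f^! = \Sigma_X^{\mathcal{L}_f} f^\ast \simeq f^\ast, \qquad f_! = f_\sharp \Sigma_X^{-\mathcal{L}_f} \simeq f_\sharp,
\end{align*}
as desired.

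The only subtlety worth flagging is that \autoref{def:smoothable-proper} was phrased for maps of smooth compact $G$-manifolds admitting a factorization through a closed embedding and a smooth proper family, and a general open embedding does not obviously fit that framework. However, the K-theoretic expression $[\mathcal{L}_f] = [TX] - [f^\ast TY]$ is manifestly well-defined for any smooth $G$-map between smooth $G$-manifolds, so the definition of $\mathcal{L}_f$ (and hence of $f_!$ and $f^!$) extends unambiguously to the open embedding case without further work. I expect this bookkeeping point to be the only genuine obstacle, and once it is dispensed with, the proof reduces to the one-line differential-geometric observation that $df$ is an isomorphism.
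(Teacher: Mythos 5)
Your argument is essentially correct and uses the same core observation as the paper --- that $df$ is an isomorphism for an open embedding --- but the two proofs resolve the ``does the definition even apply?'' point differently, and this is actually the crux. The paper's proof is two sentences: it observes that an open embedding is itself a \emph{smooth proper family of $G$-manifolds} (the fibers are single points or empty, hence compact), so it admits the tautological factorization $X \xto{\id} X \xto{f} Y$; with this factorization, $\mathcal{L}_f$ is just $Tf$ in degree zero, and $Tf = 0$ because $df$ is an isomorphism. Your workaround --- declaring that $[\mathcal{L}_f] = [TX] - [f^\ast TY]$ is ``manifestly well-defined'' for any smooth map, and extending $f^!$, $f_!$ accordingly --- is plausible but papers over the point: as written, \autoref{def:exceptional-adjunction} requires $f$ to be smoothable proper, so until you exhibit a factorization there is no $f^!$ to compare against $f^\ast$. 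The paper dispenses with this cleanly, while your proposal introduces an unnecessary (and not fully justified) redefinition. The K-theoretic bookkeeping you invoke from the proof of \autoref{prop:cotangent-complex-well-defined} is fine once a factorization is in hand, so if you simply add the observation that an open embedding is a smooth proper family, your argument closes the gap and becomes a minor variant of the paper's.
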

\begin{proof} We note that an open embedding is a smooth proper family of $G$-manifolds.
Since the embedding is open, its differential is an isomorphism, and therefore its relative tangent bundle vanishes.
\end{proof}

\begin{proposition}\label{prop:shrieks-compose} Let $f \colon X \to Y$ and $g \colon Y \to Z$ be smoothable proper. Then there is a natural isomorphism $(g\circ f)^! \cong f^! g^!$, and hence also $(g\circ f)_! \cong g_! f_!$.
\end{proposition}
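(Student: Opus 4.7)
The plan is to unwind both definitions, then apply the two structural results that have already been established: the commutation of Thom transformations with pullback and forgetful functors (\autoref{prop:thom-tr-pullback-pushf}), and the behavior of the cotangent complex under composition (\autoref{prop:cotangent-composite}). I expect the whole argument to be a formal string of isomorphisms; there is no serious obstacle, just some care needed to keep track of which base we are working over when applying Thom transformations.

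First I would handle $(g \circ f)^!$. By definition,
\[
    (g \circ f)^! = \Sigma_X^{\mathcal{L}_{g \circ f}} (g \circ f)^\ast \cong \Sigma_X^{\mathcal{L}_{g \circ f}} f^\ast g^\ast,
\]
using that $(-)^\ast$ reverses composition. On the other side,
\[
    f^! g^! = \Sigma_X^{\mathcal{L}_f} f^\ast \Sigma_Y^{\mathcal{L}_g} g^\ast,
\]
and the natural equivalence $f^\ast \Sigma_Y^{\mathcal{L}_g} \simeq \Sigma_X^{f^\ast \mathcal{L}_g} f^\ast$ from \autoref{prop:thom-tr-pullback-pushf} lets us rewrite this as $\Sigma_X^{\mathcal{L}_f} \Sigma_X^{f^\ast \mathcal{L}_g} f^\ast g^\ast$. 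Finally \autoref{prop:cotangent-composite} supplies a natural isomorphism $\Sigma_X^{\mathcal{L}_{g \circ f}} \cong \Sigma_X^{\mathcal{L}_f} \circ \Sigma_X^{f^\ast \mathcal{L}_g}$, matching the two sides.

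For the second half there are two routes. The quick one is to remark that $(g \circ f)_!$ and $g_! f_!$ are both left adjoints to the right-hand side already identified (using \autoref{prop:shrieks-compose} applied to the $(-)^!$ functors together with uniqueness of adjoints), so they must be naturally equivalent. Alternatively, one can argue directly: the adjunction $f_\sharp \dashv f^\ast$ and $g_\sharp \dashv g^\ast$ compose to give $(g \circ f)_\sharp \cong g_\sharp f_\sharp$, and then
\[
    g_! f_! = g_\sharp \Sigma_Y^{-\mathcal{L}_g} f_\sharp \Sigma_X^{-\mathcal{L}_f} \cong g_\sharp f_\sharp \Sigma_X^{-f^\ast \mathcal{L}_g} \Sigma_X^{-\mathcal{L}_f} \cong (g \circ f)_\sharp \Sigma_X^{-\mathcal{L}_{g \circ f}} = (g \circ f)_!,
\]
where the first isomorphism is the mate of $f^\ast \Sigma_Y^{\mathcal{L}_g} \simeq \Sigma_X^{f^\ast \mathcal{L}_g} f^\ast$ (equivalently, the projection formula $\Sigma_Y^{-\mathcal{L}_g} f_\sharp \simeq f_\sharp \Sigma_X^{-f^\ast \mathcal{L}_g}$ from \autoref{prop:thom-tr-pullback-pushf}), and the second uses \autoref{prop:cotangent-composite} once more.

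The only point requiring attention is verifying that all the intermediate natural equivalences really are natural — this is precisely the content of \autoref{cor:thom-tr-infinity-functor}, which guarantees that the equivalences produced by \autoref{prop:cotangent-composite} are canonical rather than chosen non-coherently, so pasting them with the base-change equivalences of \autoref{prop:thom-tr-pullback-pushf} produces a well-defined natural transformation of functors $\Ho(\GOS{Z}) \to \Ho(\GOS{X})$.
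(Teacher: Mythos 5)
Your argument is essentially the same as the paper's: unwind $(g\circ f)^!$, apply \autoref{prop:thom-tr-pullback-pushf} to commute $f^\ast$ past the Thom transformation, invoke \autoref{prop:cotangent-composite} to identify $\Sigma_X^{\mathcal{L}_{g\circ f}}$ with $\Sigma_X^{\mathcal{L}_f}\Sigma_X^{f^\ast\mathcal{L}_g}$, and then transport the result to the exceptional pushforward by the calculus of mates (your ``uniqueness of adjoints'' route). Your alternative direct computation for $(g\circ f)_!$ and the closing remark on naturality via \autoref{cor:thom-tr-infinity-functor} are additional but consistent elaborations of the same argument; incidentally, you correctly cite \autoref{prop:cotangent-composite} where the paper's proof appears to contain a reference typo pointing to \autoref{prop:cotangent-complex-well-defined}.
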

\begin{proof} Using \autoref{prop:cotangent-complex-well-defined}, we may expand $(g\circ f)^!$ as
\begin{align*}
    (g\circ f)^! &= \Sigma_X^{\mathcal{L}_{g\circ f}} f^\ast g^\ast \cong \Sigma_X^{\mathcal{L}_f} \Sigma_X^{f^\ast \mathcal{L}_g} f^\ast g^\ast.
\end{align*}
Commuting $f^\ast$ past the Thom transformation of the cotangent complex for $g$ via \autoref{prop:thom-tr-pullback-pushf}, we obtain
\begin{align*}
    \Sigma_X^{\mathcal{L}_f} f^\ast \Sigma_X^{\mathcal{L}_g} f^\ast g^\ast = f^! g^!.
\end{align*}
The desired equivalence for the exceptional pushforward follows then from the calculus of mates.
\end{proof}

We end this discussion by recalling the main duality theorem of \cite{Hu}.

\begin{theorem}\label{thm:hu} (\cite[4.9]{Hu}) When $f \colon X \to Y$ is a smooth proper family of compact $G$-manifolds, we obtain a Quillen adjunction:
\begin{align*}
    f^\ast \colon \GOS{Y} \leftrightarrows \GOS{X} \noloc f_!
\end{align*}
\end{theorem}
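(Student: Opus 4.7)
The plan is to identify $\mathcal{L}_f$ in this special case, then construct the unit and counit explicitly via Pontryagin--Thom transfers, matching the adjunction with \cite[Theorem~4.9]{Hu}. For a smooth proper family $f$, the trivial factorization $i = \id_X$, $\pi = f$ in \autoref{def:smoothable-proper} yields $Ni = 0$ and $i^\ast T\pi = Tf$, so $\mathcal{L}_f = [Tf] \in \KO_G(X)$ and $f_! = f_\sharp \Sigma_X^{-Tf}$. This recovers Hu's dualizing spectrum $C_f \simeq \Sigma_X^{Tf}\spherespec_X$.

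For the unit $\eta \colon T \to f_! f^\ast T$, the projection formula (\autoref{thm:projection}) gives
$$f_! f^\ast T \;=\; f_\sharp(\Th_X(-Tf) \smashprod_X f^\ast T) \;\simeq\; (f_\sharp \Th_X(-Tf)) \smashprod_Y T,$$
so $\eta$ is determined by a Pontryagin--Thom transfer $\spherespec_Y \to f_\sharp \Th_X(-Tf)$, built fiberwise by embedding each smooth compact $G$-manifold $X_y$ into a representation sphere and collapsing onto the Thom space of the stabilized normal bundle (the content of \autoref{sec:PT}). For the counit $\epsilon \colon f^\ast f_! S \to S$, applying exchange along the cartesian square formed by $\pi_1, \pi_2 \colon X \times_Y X \to X$ together with \autoref{prop:thom-tr-pullback-pushf} yields
$$f^\ast f_! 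S \;\simeq\; (\pi_1)_\sharp \Sigma_{X \times_Y X}^{-\pi_2^\ast Tf}(\pi_2)^\ast S,$$
and the tubular neighborhood collapse at the diagonal $\Delta \colon X \hookto X \times_Y X$, whose normal bundle is $Tf$, produces a natural transformation $(\pi_1)_\sharp \Rightarrow \Sigma_X^{Tf}\Delta^\ast$; plugging this into the display and cancelling $\Sigma_X^{Tf}\Sigma_X^{-Tf}$ (using $\pi_2\Delta = \id_X$) yields $\epsilon$.

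The main obstacle is verifying the triangle identities, which amount to a coherent compatibility between the fiberwise Pontryagin--Thom collapse appearing in the unit and the diagonal collapse appearing in the counit. This is the heart of the equivariant parametrized Wirthm\"uller isomorphism, carried out in detail by Hu. Upgrading from a homotopy-categorical adjunction to a Quillen adjunction is then formal: $\Sigma_X^{-Tf}$ is a Quillen auto-equivalence of $\GOS{X}$ by \autoref{prop:inverse-thom-transform}, and $f_\sharp \dashv f^\ast$ is already Quillen, so the composite $f^\ast \dashv f_\sharp \Sigma_X^{-Tf}$ inherits this structure.
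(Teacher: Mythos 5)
The paper does not prove this theorem; it is cited directly from Hu, so there is no internal proof to compare against. Your reconstruction of the homotopy-categorical adjunction data is conceptually sound: identifying $\mathcal{L}_f = [Tf]$ via the trivial factorization, building the unit from the Pontryagin--Thom transfer $\spherespec_Y \to f_\sharp\Th_X(-Tf)$ via projection, and building the counit from the diagonal $\Delta \colon X \hookto X\times_Y X$ (whose normal bundle is indeed $Tf$) via exchange along the cartesian square. Deferring the triangle identities to Hu is reasonable given that the theorem is cited anyway.

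However, your final paragraph contains a genuine error. You claim the upgrade to a Quillen adjunction is ``formal'' because $\Sigma_X^{-Tf}$ is a Quillen auto-equivalence and $f_\sharp \dashv f^\ast$ is Quillen, so $f_\sharp\Sigma_X^{-Tf}$ ``inherits this structure.'' This reasoning would show that $f_! = f_\sharp\Sigma_X^{-Tf}$ is a \emph{left} Quillen functor (as a composite of left Quillen functors), yielding the Quillen adjunction $f_! \dashv f^!$, which is indeed formal and already noted after \autoref{def:exceptional-adjunction}. But the theorem asserts $f^\ast \dashv f_!$, which requires $f_!$ to be a \emph{right} Quillen functor --- equivalently, $f^\ast$ to be a \emph{left} Quillen functor. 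That $f^\ast$ preserves ($q$-)cofibrations for a smooth proper family is precisely the non-formal model-theoretic content of Hu's Theorem 4.9 (and of the analogous May--Sigurdsson results), not something that follows from composing already-known Quillen functors on the opposite side. Your argument establishes the wrong handedness. Also, \autoref{prop:inverse-thom-transform} as stated only provides an inverse \emph{at the level of the homotopy category}, so it does not license calling $\Sigma_X^{-Tf}$ a Quillen auto-equivalence without further argument.
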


\section{Equivariant Pontryagin--Thom transfers}\label{sec:PT}

Given a map $f\colon X \to Y$, cohomology classes on $Y$ can be pulled back to classes on $X$. A foundational question in mathematics is when cohomological data can be transmitted the other way.

\begin{example} \textit{(Atiyah duality)} Clasically, given a smooth compact manifold $M$, pushing forward cohomological data along the map $\pi_M\colon M \to \ast$ amounts to integrating cohomology classes in order to produce a scalar. By embedding $M$ in Euclidean space $\R^n$ and then taking a one-point compactification, we obtain an embedding $i\colon M \hookto S^n$. By collapsing $S^n$ onto a tubular neighborhood of the embedding, we obtain, up to diffeomorphism, the Thom space of the normal bundle of the embedding $S^n \to \Th(Ni)$. Desuspending by $n$ gives us a map of spectra $\spherespec \to (\pi_M)_! \spherespec_M$, which we think of as the \textit{dual} of the map $M \to \ast$. Under the presence of a Thom isomorphism, the cohomology of $\Th(-TM)$ agrees with the cohomology of $M$ up to a shift, hence cohomology classes on $M$ can be pulled back along this dual map to cohomology classes of the sphere spectrum. We call the map $\spherespec \to \left( \pi_M \right)_! \spherespec_M$ a \textit{transfer} (also called an \textit{Umkehr map}), and the induced map on cohomology a \textit{Gysin map}.
\end{example}

In this section we explore transfers in the parametrized equivariant setting --- first along closed immersions, second along smooth proper families of $G$-manifolds, and finally developing a key result about composites of transfers working over a point, which will help us develop our theory of pushforwards of equivariant Euler classes.

We begin with a brief recollection about the meaning of duality for parametrized equivariant spectra.

\subsection{Duality for parametrized spectra}

Parametrized spectra come equipped with two natural notions of duality: being \textit{fiberwise duality} and \textit{Costenoble--Waner duality}. Viewing a space $X$ as an $\infty$-category (e.g. by taking its associated fundamental $\infty$-groupoid), we can consider a parametrized spectrum as an $\infty$-functor $F \colon X \to \Sp$, which is equivalently a parametrized spectrum by straightening and unstraightening. Such a functor defines a \textit{fiberwise dualizable} spectrum if $F(x)$ is dualizable for each $x\in X$. It is \textit{Costenoble--Waner dualizable} if the entire assembled spectrum $\hocolim_{x\in X}F(x)$ is dualizable.

We may alternatively view parametrized spectra as a bicategory, where the hom-category between spaces $A$ and $B$ is $\Sp^{A \times B}$. From that perspective, the category of $X$-parametrized spectra can be considered as the hom-category $\Sp^{X \times \ast} \cong \Sp^X$ from $X$ to a point. The \textit{right dual} recovers fiberwise duality, while the \textit{left dual} recovers Costenoble--Waner duality. For further discussion from this perspective, see \cite[Chapter~17]{MS}.

\begin{example} Let $M$ be a smooth compact manifold, and consider the sphere spectrum $\mathbb{S}_M \in \Sp^{M \times \ast}$. Its left (Costenoble--Waner) dual is $\Th_M(-TM) \in \Sp^{\ast \times M}$ and its right (fiberwise) dual is $\mathbb{S}_M$.
\end{example}

\begin{example} If $f \colon E \to M$ is a smooth proper family over a smooth compact manifold, and $f_\sharp \mathbb{S}_E$ is considered as living in $\Sp^{M \times \ast}$, then its Costenoble--Waner dual is $f_\sharp \Th_E(-TE)$, whereas its left dual is $f_\sharp \Th_E(-Tf) = f_! \mathbb{S}_E$.
\end{example}

\begin{example} In the category of spectra, considered as the hom-category $\Sp^{\ast \times \ast}$ in the bicategory of parametrized spectra, fiberwise and Costenoble--Waner duality coincide.
\end{example}

Any map of spaces $f \colon X \to Y$ gives rise to a natural map of parametrized $Y$-spectra $f_\sharp \mathbb{S}_X \to \mathbb{S}_Y$. In the setting where both spaces are dualizable on the same side, we can examine the relevant dual and it often gives rise to a transfer of some sort -- our main examples being a transfer constructed by May and Sigurdsson using Costenoble--Waner duality for closed embeddings \cite[18.6.5]{MS} and the Pontraygin--Thom transfer, which Ando, Blumberg, and Gepner characterize via fiberwise dualizability \cite{ABG}. In order to establish a small case of functoriality for transfers, we will leverage the straightforward functoriality of these natural maps, together with the composite of dual pairs theorem.

\subsection{Transfers along closed immersions}

Let $i\colon Z \hookto X$ be a closed $G$-embedding of smooth compact $G$-manifolds. We will discuss a Pontryagin--Thom transfer of the form $\PT(i)\colon\spherespec_X \to i_! \spherespec_Z$. In the non-equivariant setting, this transfer was constructed by May and Sigurdsson \cite[18.6.3]{MS} (see also \cite[4.17]{ABG}).

First we will better understand the spectrum $i_! \spherespec_X$. An explicit point-set model will be important later as we will leverage it to define refined Euler classes.

\begin{proposition}\label{prop:kw2} Let $i\colon Z \hookto X$ be a closed $G$-embedding. Then there is a weak equivalence in $\GOS{X}$ of the form
\begin{align*}
    i_! \spherespec_Z \simeq \Sigma^\infty C_X(X,X-Z),
\end{align*}
where $C_X(X,X-Z)$ denotes the double mapping cylinder obtained by gluing the cylinder $(X-Z) \times[0,1]$ to two copies of $X$, based at the bottom copy of $X$, with $G$-action happening levelwise in each slice of the cylinder.
\end{proposition}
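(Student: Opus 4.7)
The plan is to compute $i_!\spherespec_Z$ directly from \autoref{def:exceptional-adjunction} and then identify the result with the double mapping cylinder via an equivariant tubular neighborhood. For the closed $G$-embedding $i$, the factorization $Z \xhookto{i} X \xrightarrow{\id} X$ exhibits $i$ as smoothable proper with vanishing relative tangent along the identity, so the cotangent complex reduces to $\mathcal{L}_i \simeq Ni[-1]$, and by \autoref{cor:thom-tr-infinity-functor} the Thom transformation $\Sigma_Z^{-\mathcal{L}_i}$ coincides with fiberwise smashing by $\Th_Z(Ni)$. Combining this with the projection formula \autoref{thm:projection} to commute $i_\sharp$ past suspension by fiberwise representation spheres gives
\[
i_!\spherespec_Z \;=\; i_\sharp\,\Sigma_Z^{-\mathcal{L}_i}\spherespec_Z \;\simeq\; \Sigma_X^\infty\bigl(i_\sharp \Th_Z(Ni)\bigr),
\]
so it suffices to produce a weak equivalence $i_\sharp \Th_Z(Ni) \simeq C_X(X,X-Z)$ of retractive $G$-spaces over $X$, which by the $q$-model structure reduces to a $G$-weak equivalence of the underlying total spaces.

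The idea is to rewrite both sides as homotopy pushouts built from the sphere bundle $S(Ni)$ and the disk bundle $D(Ni)$. Writing $\Th_Z(Ni) \cong D(Ni) \cup_{S(Ni)} Z$, the pushout along the sphere bundle projection $\pi_S \colon S(Ni) \to Z$ and the boundary inclusion, and absorbing the middle $Z$ into $X$ identifies $i_\sharp \Th_Z(Ni)$ with the homotopy pushout of $D(Ni) \hookleftarrow S(Ni) \xrightarrow{i\circ\pi_S} X$. For the other side, pick a $G$-equivariant tubular neighborhood $\nu \colon D(Ni) \hookrightarrow X$ extending the zero section. Radial flow provides a $G$-deformation retraction $X - Z \simeq X \setminus \nu(\operatorname{int}D(Ni))$, and the decomposition $X = \nu(D(Ni)) \cup_{\nu(S(Ni))} (X \setminus \nu(\operatorname{int}D(Ni)))$ rewrites the double mapping cylinder as the homotopy pushout of $D(Ni) \hookleftarrow S(Ni) \xrightarrow{\nu|_{S(Ni)}} X$.

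The two attaching maps $i \circ \pi_S$ and $\nu|_{S(Ni)}$ from $S(Ni)$ to $X$ are related by the canonical radial $G$-homotopy $H(v,t) := \nu(tv)$, which equals $i \circ \pi_S$ at $t=0$ and $\nu|_{S(Ni)}$ at $t=1$. Since $S(Ni) \hookrightarrow D(Ni)$ is a Hurewicz $G$-cofibration, homotopy invariance of homotopy pushouts along $G$-homotopic attaching maps yields a $G$-weak equivalence between the two pushouts above; applying $\Sigma_X^\infty$ transfers this to the desired weak equivalence in $\GOS{X}$. The main obstacle will be to arrange the comparison so that it respects the parametrization over $X$: the two realizations involve different projections of $D(Ni)$ to $X$ (via $i \circ \pi_D$ versus $\nu$), which agree only up to the analogous radial homotopy extended over the disk bundle. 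Fortunately this bookkeeping is manageable in the $q$-model structure, where weak equivalences in $\Ret{G}{X}$ are detected on underlying total spaces, so the zig-zag of equivalences on unparametrized $G$-spaces is enough.
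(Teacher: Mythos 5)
Your reduction to the space-level equivalence $i_\sharp \Th_Z(Ni) \simeq C_X(X,X-Z)$ matches the paper's reduction: both unwind $i_!\spherespec_Z$ by identifying $\Sigma_Z^{-\mathcal{L}_i}\simeq\Sigma_Z^{Ni}$ for a closed embedding and then use the projection formula to commute $i_\sharp$ past fiberwise suspension, giving $i_!\spherespec_Z\simeq\Sigma_X^\infty i_\sharp\Th_Z(Ni)$. Where you diverge is the space-level step: the paper simply cites Kronholm--Waner (reference [KW2], Lemma 7.2) for the equivalence $i_\sharp\Th_Z(Ni)\simeq C_X(X,X-Z)$ in $\Ret{G}{X}$, whereas you supply a direct proof via an equivariant tubular neighborhood, exhibiting both sides as homotopy pushouts of shape $D(Ni)\hookleftarrow S(Ni)\to X$ and interpolating the attaching maps $i\circ\pi_S$ and $\nu|_{S(Ni)}$ by the radial $G$-homotopy. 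Your version is self-contained and makes the geometry explicit; the paper's is shorter and defers the point-set care to a published reference. One wrinkle you should not gloss over in your final paragraph: the fact that weak equivalences in $\Ret{G}{X}$ are detected on underlying $G$-spaces does \emph{not} let you promote an arbitrary zig-zag of unparametrized $G$-equivalences to an equivalence in $\Ret{G}{X}$ --- each arrow in the zig-zag must itself be a map over $X$. This is easily arranged by running everything through the thickened pushout of $D(Ni)\times[0,1]\hookleftarrow S(Ni)\times[0,1]\to X$ attached via the full radial homotopy $\tilde H(v,t)=\nu(tv)$, which carries a natural projection to $X$ and maps to both of your models by weak equivalences of retractive $G$-spaces over $X$; but your closing sentence as written overstates what the $q$-model structure alone provides, and a cleaner proof would spell out that intermediate object. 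Also, the citation of the $\infty$-categorical functoriality corollary for identifying $\Sigma_Z^{-\mathcal{L}_i}$ with smashing by $\Th_Z(Ni)$ is unnecessary; this follows directly from the definition of the Thom transformation on a two-term complex.
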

\begin{proof} By \cite[7.2]{KW2}, there is a weak equivalence in $\Ret{G}{X}$ of the form
\[
    i_\sharp \Th_Z(Ni) \simeq C_X(X,X-Z).
\]
By taking suspension spectra, we would like to see that $\Sigma^\infty i_\sharp \Th_Z(Ni) \simeq i_! \spherespec_Z$. That is, we must demonstrate an equivalence
\begin{align*}
    i_\sharp \Sigma^\infty \Th_Z(Ni) \cong \Sigma^\infty i_\sharp \Th_Z(Ni).
\end{align*}
This follows from a more general fact -- that we need not spectrify when applying the forgetful functor to suspension spectra. This is a natural consequence of projection \autoref{thm:projection}. If $T\in \Ret{G}{X}$ is a retractive $X$-space, then the projection formula yields the following natural isomorphism (the second follows from pullback preserving spheres)
\begin{align*}
    \epsilon_X^{V'-V} \smashprod_X i_\sharp \left( \epsilon^V_Z \smashprod_Z T \right) \cong i_\sharp \left( i^\ast \epsilon_X^{V'-V} \smashprod_Z \epsilon^V_Z \smashprod_Z T \right)\cong i_\sharp \left( \epsilon_Z^{V'} \smashprod_Z T \right).
\end{align*}
In other words, we have that $\left\{ i_\sharp \Sigma_Z^{V} T \right\}_{V\in \RO(G)}$ is already a spectrum.% Observing that $Ni = - \mathcal{L}_i$, the result follows.
\end{proof}

\begin{proposition}\label{prop:PT-closed-inclusion} (\cite[18.6.5]{MS}) Given a closed $G$-embedding of compact $G$-manifolds $i \colon Z \hookto M$, the natural map $i_\sharp \mathbb{S}_Z \to \mathbb{S}_M$ is Costenoble--Waner dualizable, with dual given by applying $\Sigma_M^{TM}$ to a \textit{Pontryagin-Thom transfer}
\begin{align*}
    \PT(i) \colon \mathbb{S}_M \to i_! \mathbb{S}_Z.
\end{align*}
\end{proposition}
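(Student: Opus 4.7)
The plan is to mirror the construction in \cite[18.6.5]{MS}, adapted to the equivariant setting: build the Pontryagin--Thom transfer geometrically via a collapse onto a tubular neighborhood, and verify that, after applying the auto-equivalence $\Sigma_M^{TM}$, this agrees with the Costenoble--Waner dual of the natural map $i_\sharp \mathbb{S}_Z \to \mathbb{S}_M$.

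First I would construct $\PT(i)$ directly. By the equivariant tubular neighborhood theorem, there is a $G$-invariant open neighborhood $\nu \subseteq M$ of $Z$ equivariantly diffeomorphic to the total space of $Ni$. The fiberwise Pontryagin--Thom collapse over $M$, which retracts $M \setminus \nu$ onto the basepoint copy of $M$ while preserving the bundle direction, produces a map in $\Ret{G}{M}$ of the form $\mathbb{S}_M \to i_\sharp \Th_Z(Ni)$. Unpacking $i_!$ via the cotangent complex (\autoref{prop:cotangent-complex-well-defined}) gives $[\mathcal{L}_i] = [TZ] - [i^\ast TM] = -[Ni]$, so $i_! \mathbb{S}_Z \simeq i_\sharp \Sigma_Z^{Ni} \mathbb{S}_Z \simeq i_\sharp \Th_Z(Ni)$, yielding the desired $\PT(i) \colon \mathbb{S}_M \to i_! \mathbb{S}_Z$.

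Second, for Costenoble--Waner dualizability, the preceding examples give that $\mathbb{S}_M$ is CW dualizable with dual $\Th_M(-TM)$ (via equivariant Atiyah duality, after choosing an equivariant embedding $M \hookto V$ into a $G$-representation), while compactness of $Z$ together with properness of $i$ give that $i_\sharp \mathbb{S}_Z$ is CW dualizable with dual $i_\sharp \Th_Z(-TZ)$. Since CW duality is a bicategorical duality, the natural map $i_\sharp \mathbb{S}_Z \to \mathbb{S}_M$ admits a CW dual $\Th_M(-TM) \to i_\sharp \Th_Z(-TZ)$. Applying the auto-equivalence $\Sigma_M^{TM}$ and using \autoref{prop:thom-tr-pullback-pushf} with the split short exact sequence $0 \to TZ \to i^\ast TM \to Ni \to 0$, we compute
\begin{align*}
\Sigma_M^{TM} i_\sharp \Th_Z(-TZ) \simeq i_\sharp \Sigma_Z^{i^\ast TM - TZ} \mathbb{S}_Z \simeq i_\sharp \Th_Z(Ni) \simeq i_! \mathbb{S}_Z,
\end{align*}
so the suspended dual takes the correct form $\mathbb{S}_M \to i_! \mathbb{S}_Z$.

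The main obstacle I anticipate is identifying the suspended dual map with the geometric collapse built in step one (rather than merely observing that source and target match). The key is that the coevaluation witnessing $\mathbb{S}_M \dashv \Th_M(-TM)$ is itself realized geometrically, by the Pontryagin--Thom collapse $S^V \to \Th(NM)$ associated to the embedding $M \hookto V$. Precomposing with $Z \hookto M \hookto V$ yields the analogous collapse for $Z \hookto V$, which under the Atiyah duality equivalences on $Z$ unwinds to the tubular neighborhood collapse of step one. Every ingredient in this comparison --- compactness, existence of $G$-equivariant tubular neighborhoods, and the projection formula \autoref{thm:projection} --- is available equivariantly, so the argument of \cite[18.6.5]{MS} transports essentially verbatim.
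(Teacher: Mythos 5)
The paper gives no proof of this proposition; it simply cites \cite[18.6.5]{MS} and points to \cite[18.6.3]{MS} for the tubular-neighborhood construction, so your reconstruction is essentially the same approach as the (cited) source. As a small clarification: your Thom-twist computation $\Sigma_M^{TM}\, i_\sharp \Th_Z(-TZ) \simeq i_!\mathbb{S}_Z$ correctly shows that $\PT(i)$ is obtained by applying $\Sigma_M^{TM}$ \emph{to the Costenoble--Waner dual} (equivalently the dual is $\Sigma_M^{-TM}\PT(i)$), which is a more precise reading than the proposition's phrasing ``dual given by applying $\Sigma_M^{TM}$ to $\PT(i)$'' and is the sign that makes the later use in \autoref{lem:PT-compatibility} (where $(\pi_M)_\sharp \Sigma_M^{-TM}\PT(i) = (\pi_M)_!\PT(i)$) come out right.
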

The explicit construction of this transfer relies heavily on the equivariant tubular neighborhood theorem, and we refer the reader to \cite[18.6.3]{MS} for details about the construction. Here we will be content with the dualizability of the natural inclusion map of spheres, and the existence of a transfer.

\subsection{Transfers along smooth proper families}

If $f \colon X \to Y$ is a smooth proper family, then via the adjunction in \autoref{thm:hu} we have a natural transformation $\id \to f_! f^\ast$. The component of this transformation at the sphere spectrum is of the form $\PT(f)\colon\spherespec_Y \to f_! \spherespec_X$. This is what is referred to as the \textit{equivariant Pontryagin--Thom transfer} associated to a smooth proper family of $G$-manifolds. If $G$ is the trivial group, this is consistent with the definition found in \cite[4.13]{ABG}. An explicit model for this transfer may be found in \cite{Hu,MS,ABG}.

\begin{example}\label{ex:PT-to-point} If $M$ is a smooth compact $G$-manifold, then the structure map to a point $\pi_M \colon M \to \ast$ induces $(\pi_M)_\sharp \mathbb{S}_M \to \mathbb{S}$ in the category of spectra. Its fiberwise dual and Costenoble--Waner dual coincide, and they are equal to $\PT(\pi)$, which is the classical Pontryagin--Thom collapse map:
\begin{align*}
    \PT(\pi_M) \colon \mathbb{S} \to (\pi_M)_! \spherespec_M = \Th(-TM).
\end{align*}
\end{example}

Suppose we have a closed $G$-immersion of smooth compact $G$-manifolds $Z \hookto M$, and consider the commutative diagram
\[ \begin{tikzcd}
    Z\rar[hook,"i" above]\ar[dr,"\pi_Z" below left] & M\dar["\pi_M" right]\\
     & \ast
\end{tikzcd} \]
An important lemma for us to establish is that transferring along $\pi_Z$ is equivalent to first transferring along $i$ and then along $\pi_M$. This is an immediate consequence of the characterization of transfers as Costenoble--Waner duals to natural maps.

\begin{lemma}\label{lem:PT-compatibility} Let $i \colon Z \hookto M$ be a closed $G$-immersion of smooth compact $G$-manifolds. Then the composite
\begin{align*}
    \spherespec \xto{\PT(\pi_M)} (\pi_M)_! \spherespec_M \xto{(\pi_M)_! \PT(i)} \pi_! i_! \spherespec_Z 
\end{align*}
is weakly equivalent to $\PT(i)$.
\end{lemma}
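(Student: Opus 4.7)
The plan is to use the characterization of Pontryagin--Thom transfers as Costenoble--Waner duals of the natural assembly maps $\eta_f \colon f_\sharp \mathbb{S}_X \to \mathbb{S}_Y$, combined with functoriality of duality in the bicategory of parametrized spectra. As a preliminary, note that $\pi_M \circ i = \pi_Z$, so by \autoref{prop:shrieks-compose} the codomain $\pi_! i_! \mathbb{S}_Z$ of the displayed composite is canonically equivalent to $(\pi_Z)_! \mathbb{S}_Z$, and the claim is most naturally parsed as identifying the composite with $\PT(\pi_Z) = \PT(\pi_M \circ i)$.

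For each smoothable proper morphism $f \colon X \to Y$, write $\eta_f \colon f_\sharp \mathbb{S}_X \to \mathbb{S}_Y$ for the natural assembly map, i.e.\ the counit of $f_\sharp \dashv f^\ast$ evaluated at $\mathbb{S}_Y$ (using $f^\ast \mathbb{S}_Y = \mathbb{S}_X$). By \autoref{prop:PT-closed-inclusion} and \autoref{ex:PT-to-point} respectively, both $\eta_i$ and $\eta_{\pi_M}$ are Costenoble--Waner dualizable, with left duals given -- after twisting by the appropriate cotangent-complex Thom transformations -- by $\PT(i)$ and $\PT(\pi_M)$. Pseudofunctoriality of the sharp-pushforward provides a canonical identification $(\pi_M)_\sharp i_\sharp \simeq (\pi_Z)_\sharp$ under which the assembly map $\eta_{\pi_Z}$ factors as
$$
(\pi_M)_\sharp i_\sharp \mathbb{S}_Z \xto{(\pi_M)_\sharp \eta_i} (\pi_M)_\sharp \mathbb{S}_M \xto{\eta_{\pi_M}} \mathbb{S}.
$$

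I would then invoke the composite-of-dual-pairs theorem in the bicategory of parametrized $G$-spectra (see \cite[Chapter~16]{MS}): the left (Costenoble--Waner) dual of a composite of dualizable 1-cells is the composite of the individual duals in reverse order. Applying this to the factorization above yields precisely the two-step composite
$$
\mathbb{S} \xto{\PT(\pi_M)} (\pi_M)_! \mathbb{S}_M \xto{(\pi_M)_! \PT(i)} (\pi_M)_! i_! \mathbb{S}_Z
$$
appearing in the lemma, which by construction is the CW-dual of $\eta_{\pi_Z}$ and therefore equal to $\PT(\pi_Z)$.

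The main obstacle is coherent bookkeeping of the cotangent-complex Thom twists that distinguish $f_\sharp$ from $f_!$. The raw output of CW-duality naturally involves $\Sigma_M^{-\mathcal{L}_i}$ and $\Sigma^{-\mathcal{L}_{\pi_M}}$ in separate spots, and to recognize the composite as the Thom twist by $\mathcal{L}_{\pi_Z}$ applied at the appropriate place one must invoke \autoref{prop:cotangent-composite} (to identify $\mathcal{L}_{\pi_Z}$ with the assembly of $\mathcal{L}_i$ and $i^\ast \mathcal{L}_{\pi_M}$) together with \autoref{prop:thom-tr-pullback-pushf} to commute pullbacks past Thom transformations. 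Once these compatibility isomorphisms are installed, the argument collapses to an unwinding of definitions.
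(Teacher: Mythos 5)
Your proposal takes essentially the same route as the paper: factor the assembly map $\eta_{\pi_Z}$ as $(\pi_M)_\sharp \eta_i$ followed by $\eta_{\pi_M}$, identify the Pontryagin--Thom transfers as Costenoble--Waner duals via \autoref{prop:PT-closed-inclusion} and \autoref{ex:PT-to-point}, and invoke the composite-of-dual-pairs theorem \cite[16.5.1]{MS}; you also correctly parse the target of the lemma as $\PT(\pi_Z)$ rather than the literal $\PT(i)$. The one step you relegate to ``coherent bookkeeping'' that the paper makes explicit is the identification of the Costenoble--Waner dual of $(\pi_M)_\sharp \eta_i$ with $(\pi_M)_! \PT(i)$, for which the paper invokes the compatibility of $(\pi_M)_\sharp$ with Costenoble--Waner duality \cite[17.3.3]{MS}.
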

\begin{proof} It is clear that the composite of natural maps
\begin{align*}
    (\pi_M)_\sharp i_\sharp \mathbb{S}_Z \xto{(1)} (\pi_M)_\sharp \mathbb{S}_M \xto{(2)} \mathbb{S}
\end{align*}
is equivalent to the map $(\pi_Z)_\sharp \mathbb{S}_Z \to \mathbb{S}$ via the natural isomorphism $(\pi_M)_\sharp i_\sharp\cong (\pi_Z)_\sharp$. The Costenoble--Waner dual of the composite is $\PT(\pi_Z)$ by \autoref{ex:PT-to-point}. Via the composite of dual pairs theorem \cite[16.5.1]{MS}, this is equal to the composite of the Costenoble--Waner duals of the two maps. The Costenoble--Waner dual of the map (2) is $\PT(\pi_M)$, so in order to prove the lemma it suffices to verify that the Costenoble--Waner dual of the map (1) is $\pi_! \PT(i)$.

The compatibility of $(\pi_M)_\sharp$ with Costenoble--Waner duality can be found in \cite[17.3.3]{MS}, so it suffices to apply $(\pi_M)_\sharp$ to the Costenoble--Waner dual of $i_\sharp \mathbb{S}_Z \to \mathbb{S}_M$. This dual is $\Sigma_M^{TM}\PT(i)$ by \autoref{prop:PT-closed-inclusion}. Hence altogether the dual of (1) is $(\pi_M)_\sharp\Sigma_M^{TM}\PT(i)$, which is $(\pi_M)_! \PT(i)$.
\end{proof}

\section{Cohomology}\label{sec:cohomology}

Here we develop a theory of cohomology with compact supports, twisted by perfect complexes. This theory mirrors that found in the motivic setting (c.f. \cite{DJK,deloop2,BW3}, etc.). The main goal is to demonstrate that cohomology classes can be pushed forward by forgetting support, or by decomposing along the clopen components of the support. In this sense, certain abstract cohomology classes can be understood in rings as sums of local contributions of data. In \autoref{sec:conservation} we will leverage this perspective to prove conservation of number in the equivariant setting.

\subsection{Twisted cohomology}

Let $\xi \in \Perf(\KO_G(X))$ be a perfect complex of equivariant vector bundles over $X$, and let $A\in\Sp_G$ be an arbitrary genuine $G$-spectrum, which represents an $\RO(G)$-graded cohomology theory. 

\begin{definition}\label{def:twisted-cohomology} Define $\xi$\textit{-twisted cohomology with coefficients in} $A$ by
\begin{align*}
    A^\xi(X) := \left[ \spherespec_X, \Sigma_X^\xi \pi_X^\ast A \right]_{X}.
\end{align*}
\end{definition}

When $\xi$ is a trivial bundle, we show that \autoref{def:twisted-cohomology} recovers $\RO(G)$-indexed cohomology groups.

\begin{example}\label{ex:A-cohomology-twisted-by-triv-bundle} If $\xi = \epsilon_X^V$ is a trivial bundle for some $G$-representation $V$, then $\epsilon_X^V$-twisted cohomology is of the form
\begin{align*}
    A^{\epsilon_X^V}(X) &= \left[ \spherespec_X, S_X^V \smashprod_X \pi_X^\ast A \right]_X = \left[ \left( \pi_X \right)_\sharp S_X^{-V}, A \right] = \left[ X_+ \smashprod S^{-V}, A \right].
\end{align*}
This last group is precisely the definition of $A^V(X)$, that is, the $A$-cohomology of $X$ indexed over $\RO(G)$ (see e.g. \cite[p.~35]{LMS}).
\end{example}

\begin{notation}\label{nota:twist-by-rep} For $V$ a $G$-representation and $A\in\Sp_G$ any spectrum, \autoref{ex:A-cohomology-twisted-by-triv-bundle} indicates that we can use $A^V(X)$ to refer to classical $V$th $A$-cohomology group of $X$ or the $A$-cohomology of $X$ twisted by the trivial vector bundle $\epsilon^V_X$ without loss of generality. Similarly to \autoref{ex:thom-transformation-trivial-vb}, we will freely use $A^V(X)$ instead of $A^{\epsilon_X^V}(X)$.
\end{notation}

When $Z \subseteq X$ is a closed $G$-subspace, we can talk about cohomology classes that are ``supported'' on $Z$. Let $i \colon Z \hookto X$ denote the inclusion map.

\begin{definition}\label{def:twisted-cohom-supports} For $\xi\in \Perf(\KO_G(X))$, define $\xi$\textit{-twisted cohomology with coefficients in $A$ and support on} $Z$ to be
\begin{align*}
    A_Z^\xi(X) := \left[ i_! \spherespec_Z, \Sigma_X^\xi \pi_X^\ast A \right]_X.
\end{align*}
\end{definition}

We should provide some intuition as to why this is a reasonable definition of cohomology supported on $Z$. Recall by \autoref{prop:kw2} that $i_! \spherespec_Z$ is equivalent to the double mapping cylinder $C_X(X,X-Z)$. Collapsing this space along its cylinder coordinate, we obtain the space in \autoref{fig:fried-egg}. 

\begin{figure}[h]
\centering
\resizebox{0.3\columnwidth}{!}{%
\begin{tikzpicture}
	\begin{pgfonlayer}{nodelayer}
		\node [style=none] (8) at (-3.775, 1) {};
		\node [style=none] (9) at (-3.325, 0.6) {};
		\node [style=none] (10) at (-2.85, 0.675) {};
		\node [style=none] (11) at (-2.3, 0.65) {};
		\node [style=none] (12) at (-1.9, 1) {};
		\node [style=none] (13) at (-3.7, 1.525) {};
		\node [style=none] (14) at (-2.925, 1.45) {};
		\node [style=none] (15) at (-1.95, 1.55) {};
		\node [style=none] (16) at (-4.25, 0.25) {};
		\node [style=none] (17) at (-1.4, 0.25) {};
		\node [style=none] (18) at (-3, 1.25) {};
		\node [style=none] (19) at (-2.875, 2.025) {};
		\node [style=none] (20) at (-1.65, 0.525) {\tiny$X$};
		\node [style=blue text] (21) at (-2.9, 0.975) {\tiny\color{xdarkblue}{$Z$}};
	\end{pgfonlayer}
	\begin{pgfonlayer}{edgelayer}
		\draw [style=xdb dotted, in=90, out=0, looseness=0.50] (18.center) to (12.center);
		\draw [style=xdb line, in=0, out=-90, looseness=1.25] (12.center) to (11.center);
		\draw [style=xdb line, in=0, out=180, looseness=1.50] (11.center) to (10.center);
		\draw [style=xdb line, in=0, out=-180, looseness=1.25] (10.center) to (9.center);
		\draw [style=xdb line, in=-90, out=180] (9.center) to (8.center);
		\draw [style=xdb dotted, in=180, out=90, looseness=0.75] (8.center) to (18.center);
		\draw [style=black line, in=150, out=-180] (13.center) to (16.center);
		\draw [style=black 0.8, in=-165, out=-30, looseness=0.50] (16.center) to (17.center);
		\draw [style=black line, in=-15, out=15] (17.center) to (15.center);
		\draw [style=black dashed, in=0, out=165] (15.center) to (14.center);
		\draw [style=black dashed, in=0, out=-180] (14.center) to (13.center);
		\draw [style=xdb line] (8.center)
			 to [in=-180, out=90] (19.center)
			 to [in=90, out=0, looseness=1.25] (12.center);
	\end{pgfonlayer}
\end{tikzpicture}
}
\caption{The homotopy type of the space $i_! S^0_Z$.}
  \label{fig:fried-egg}
\end{figure}
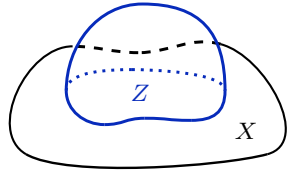

The bottom copy of $X$ is the basepoint, which has to be sent to the basepoint in the target. What we are left with is an extra copy of $Z$, glued along the base, which is free to be mapped anywhere in the target. Thus we think of maps out of $C_X(X,X-Z)$ yielding cohomology classes supported on $Z$.

\begin{definition}\label{def:forget-supports} Precomposition with the Pontryagin--Thom transfer $\PT(i) \colon \spherespec_X \to i_! \spherespec_Z$, defined in \autoref{prop:PT-closed-inclusion}, induces a \textit{forgetting support} map
\begin{align*}
    A_Z^{\xi}(X) \to A^\xi(X).
\end{align*}
\end{definition}

\begin{proposition}\label{prop:twisted-tangent-bundle} Let $M$ be a smooth proper $G$-manifold, and $i: Z\hookto M$ a closed $G$-embedding. Then there is a canonical isomorphism
\begin{align*}
    A_Z^{TM}(M) \cong A^{TZ}(Z).
\end{align*}
\end{proposition}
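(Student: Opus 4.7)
The plan is to unpack both sides using the definition of twisted cohomology and then manipulate via the exceptional adjunction $i_! \dashv i^!$ of \autoref{def:exceptional-adjunction} until the cotangent complex of $i$ and the tangent bundle of $M$ combine, via the short exact sequence $0 \to TZ \to i^\ast TM \to Ni \to 0$, into the tangent bundle of $Z$.

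Concretely, I would begin by writing
\[
A_Z^{TM}(M) = \bigl[\, i_! \spherespec_Z,\ \Sigma_M^{TM} \pi_M^\ast A \,\bigr]_M,
\]
and then apply the $i_! \dashv i^!$ adjunction to move to the category $\GOS{Z}$, obtaining $\bigl[\spherespec_Z, i^! \Sigma_M^{TM} \pi_M^\ast A \bigr]_Z$. Since $i$ is a closed embedding, its cotangent complex is concentrated in degree $-1$ with value $Ni$, so by \autoref{ex:cotangent-complexes} (and the computation in \autoref{prop:thom-transform-on-perf}) we have $\Sigma_Z^{\mathcal{L}_i} \simeq \Sigma_Z^{-Ni}$, hence $i^! = \Sigma_Z^{-Ni} \circ i^\ast$.

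Next, I would commute $i^\ast$ past the Thom transformation $\Sigma_M^{TM}$ using \autoref{prop:thom-tr-pullback-pushf}, yielding a natural equivalence
\[
i^\ast \Sigma_M^{TM} \pi_M^\ast A \simeq \Sigma_Z^{i^\ast TM} i^\ast \pi_M^\ast A \simeq \Sigma_Z^{i^\ast TM} \pi_Z^\ast A,
\]
where the last step uses $\pi_M \circ i = \pi_Z$ and pseudofunctoriality of $(-)^\ast$. Composing the accumulated Thom transformations gives $\Sigma_Z^{-Ni} \circ \Sigma_Z^{i^\ast TM}$, which by \autoref{prop:Thom-splits-SES} applied to the canonical SES $0 \to TZ \to i^\ast TM \to Ni \to 0$ of equivariant bundles on $Z$ is canonically isomorphic to $\Sigma_Z^{TZ}$.

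Assembling the chain of natural equivalences identifies the mapping space with $[\spherespec_Z,\, \Sigma_Z^{TZ} \pi_Z^\ast A]_Z = A^{TZ}(Z)$, which is the desired isomorphism. There is no serious obstacle here; the only delicate point is bookkeeping the signs in the Thom transformation of $\mathcal{L}_i$ (so that $i^!$ produces $\Sigma_Z^{-Ni}$ rather than $\Sigma_Z^{+Ni}$) and verifying that the isomorphism obtained from splitting the tangent sequence is canonical, which follows from \autoref{cor:thom-tr-infinity-functor} since the splittings form a contractible space.
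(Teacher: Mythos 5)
Your proposal is correct and follows essentially the same route as the paper: pass through the exceptional adjunction $i_! \dashv i^!$, identify $i^! = \Sigma_Z^{-Ni} i^\ast$ via the cotangent complex of a closed embedding, commute $i^\ast$ past $\Sigma_M^{TM}$ using \autoref{prop:thom-tr-pullback-pushf}, and then collapse $\Sigma_Z^{-Ni}\Sigma_Z^{i^\ast TM}$ to $\Sigma_Z^{TZ}$ via the short exact sequence $0 \to TZ \to i^\ast TM \to Ni \to 0$. The sign bookkeeping and the canonicity remark you flag are handled identically in the paper.
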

\begin{proof} We can write
\begin{align*}
    A_Z^{TM}(M) = \left[ i_! \spherespec_Z, \Sigma_M^{TM} \pi_M^\ast A \right]_X \cong \left[ \spherespec_Z, i^! \Sigma_M^{TM} \pi_M^\ast A \right]_Z.
\end{align*}
As the exceptional pullback is given by $i^! = \Sigma_Z^{\mathcal{L}_i} i^\ast = \Sigma_Z^{-Ni} i^\ast$, we may rewrite the above as
\begin{align*}
    \left[ \spherespec_Z, \Sigma_Z^{-Ni} i^\ast \Sigma_M^{TM} \pi_M^\ast A \right]_Z.
\end{align*}
Commuting $i^\ast$ with the Thom transformation via \autoref{prop:thom-tr-pullback-pushf} yields
\begin{align*}
    \left[ \spherespec_Z, \Sigma_Z^{-Ni} \Sigma_Z^{i^\ast TM} \pi_Z^\ast A \right]_Z.
\end{align*}
From the short exact sequence
\begin{align*}
    0 \to TZ \to i^\ast TM \to Ni \to 0,
\end{align*}
we have that $\Sigma_Z^{-Ni} \Sigma_Z^{i^\ast TM} \cong \Sigma_Z^{TZ}$, from which we can see that $A^{TM}_Z(M)$ is isomorphic to
\begin{equation*}
\begin{aligned}
    \left[ \spherespec_Z, \Sigma_Z^{TZ} \pi_Z^\ast A \right]_Z = A^{TZ}(Z).
\end{aligned}\qedhere
\end{equation*}
\end{proof}
Recall classically that compactly supported cohomology classes decompose over their support. In order to make this precise, we have to be careful about what we mean by decomposing spaces equivariantly.

\begin{terminology}\label{term:clopen-components} Let $i \colon Z \hookto X$ be a closed $G$-embedding. As a topological subspace, we may decompose $Z$ non-equivariantly into its clopen components: $Z = \amalg_i W_i$. As $G$ acts via homeomorphisms, we see that the $G$-orbit of any component is both closed and open as well. Thus we may decompose $Z$ as $Z = \amalg G\cdot W_i$, and we refer to the orbits $G\cdot W_i$ as the \textit{equivariant clopen components} of $Z$ in $X$.
\end{terminology}

By collapsing a double mapping cylinder $C_X(X,X-Z)$ down along the time axis, we obtain a ``fried egg'' space as in \autoref{fig:fried-egg}. When $Z$ is decomposed into its equivariant clopen components, we see that the double mapping cylinder decomposes as a wedge sum over the base copy of $X$, as pictured in \autoref{fig:bubble-wrap}.

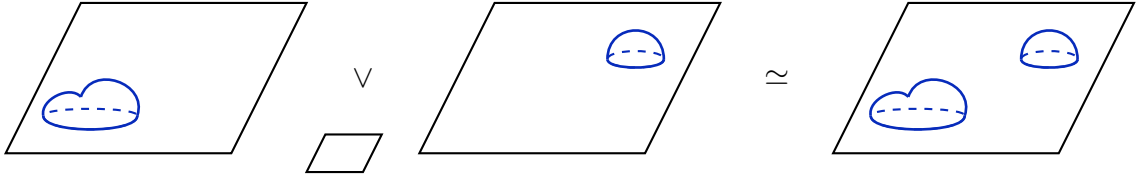
\begin{figure}[H]
  \centering
    \begin{tikzpicture}
	\begin{pgfonlayer}{nodelayer}
		\node [style=none] (0) at (1.25, -1) {};
		\node [style=none] (1) at (2.25, 1) {};
		\node [style=none] (2) at (5.25, 1) {};
		\node [style=none] (3) at (4.25, -1) {};
		\node [style=none] (10) at (0.5, 0) {\(\vee\)};
		\node [style=none] (15) at (6, 0) {\(\simeq\)};
		\node [style=none] (39) at (3.75, 0.25) {};
		\node [style=none] (40) at (4.5, 0.25) {};
		\node [style=none] (41) at (6.75, -1) {};
		\node [style=none] (42) at (7.75, 1) {};
		\node [style=none] (43) at (10.75, 1) {};
		\node [style=none] (44) at (9.75, -1) {};
		\node [style=none] (45) at (7.25, -0.5) {};
		\node [style=none] (46) at (7.75, -0.25) {};
		\node [style=none] (47) at (8.5, -0.5) {};
		\node [style=none] (48) at (9.25, 0.25) {};
		\node [style=none] (49) at (10, 0.25) {};
		\node [style=none] (50) at (-4.25, -1) {};
		\node [style=none] (51) at (-3.25, 1) {};
		\node [style=none] (52) at (-0.25, 1) {};
		\node [style=none] (53) at (-1.25, -1) {};
		\node [style=none] (54) at (-3.75, -0.5) {};
		\node [style=none] (55) at (-3.25, -0.25) {};
		\node [style=none] (56) at (-2.5, -0.5) {};
		\node [style=none] (57) at (-0.25, -1.25) {};
		\node [style=none] (58) at (0, -0.75) {};
		\node [style=none] (59) at (0.75, -0.75) {};
		\node [style=none] (60) at (0.5, -1.25) {};
		\node [style=none] (61) at (3.75, 0.25) {};
		\node [style=none] (62) at (4.5, 0.25) {};
		\node [style=none] (63) at (9.25, 0.25) {};
		\node [style=none] (64) at (10, 0.25) {};
		\node [style=none] (65) at (7.25, -0.5) {};
		\node [style=none] (66) at (8.5, -0.5) {};
		\node [style=none] (67) at (-3.75, -0.5) {};
		\node [style=none] (68) at (-2.5, -0.5) {};
	\end{pgfonlayer}
	\begin{pgfonlayer}{edgelayer}
		\draw [style=black 0.8] (2.center)
			 to (3.center)
			 to (0.center)
			 to (1.center)
			 to cycle;
		\draw [style=xdb line, bend left=90, looseness=0.50] (40.center) to (39.center);
		\draw [style=xdb line, bend left=90, looseness=1.75] (39.center) to (40.center);
		\draw [style=black 0.8] (43.center)
			 to (44.center)
			 to (41.center)
			 to (42.center)
			 to cycle;
		\draw [style=xdb line, bend left=75] (45.center) to (46.center);
		\draw [style=xdb line, bend left=90, looseness=1.50] (46.center) to (47.center);
		\draw [style=xdb line, bend left=90, looseness=0.50] (47.center) to (45.center);
		\draw [style=xdb line, bend left=90, looseness=0.50] (49.center) to (48.center);
		\draw [style=xdb line, bend left=90, looseness=1.75] (48.center) to (49.center);
		\draw [style=black 0.8] (52.center)
			 to (53.center)
			 to (50.center)
			 to (51.center)
			 to cycle;
		\draw [style=xdb line, bend left=75] (54.center) to (55.center);
		\draw [style=xdb line, bend left=90, looseness=1.50] (55.center) to (56.center);
		\draw [style=xdb line, bend left=90, looseness=0.50] (56.center) to (54.center);
		\draw [style=black 0.8] (59.center)
			 to (60.center)
			 to (57.center)
			 to (58.center)
			 to cycle;
		\draw [style=xdb dashed] (62.center)
			 to [bend left=90, looseness=0.50] (61.center)
			 to [bend left=90, looseness=0.50] cycle;
		\draw [style=xdb dashed] (64.center)
			 to [bend left=90, looseness=0.50] (63.center)
			 to [bend left=90, looseness=0.50] cycle;
		\draw [style=xdb dashed] (66.center)
			 to [bend left=90, looseness=0.50] (65.center)
			 to [bend left=90, looseness=0.25] cycle;
		\draw [style=xdb dashed] (68.center)
			 to [bend left=90, looseness=0.50] (67.center)
			 to [bend left=90, looseness=0.25] cycle;
	\end{pgfonlayer}
\end{tikzpicture}
  \caption{If $Z = Z_1 \amalg Z_2$, then we have that $C_X(X,X-Z_1) \vee_X C_X(X,X-Z_2) \simeq C_X(X,X-Z)$.}
  \label{fig:bubble-wrap}
\end{figure}

\begin{proposition}\label{prop:decomp} Take a closed $G$-embedding $i \colon Z\hookto X$, let $Z = \amalg_n Z_n$ be the decomposition of $Z$ into its equivariant clopen components, and denote by $i_n\colon Z_n \hookto X$ the composite inclusion for each $n$. Then there is a weak equivalence 
\begin{align*}
    i_\sharp \Th_Z(Ni) \simeq \vee_n (i_n)_! \mathbb{S}_{Z_n}.
\end{align*}
\end{proposition}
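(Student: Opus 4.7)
The plan is to decompose the fiberwise Thom space $\Th_Z(Ni)$ along the equivariant clopen components of $Z$ directly at the level of retractive $Z$-spaces, push forward along $i$, and then pass to suspension spectra.

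Write $j_n \colon Z_n \hookto Z$ for the inclusion of the $n$th clopen component, so that $i_n = i \circ j_n$. Each $j_n$ is simultaneously a closed embedding and an open embedding of smooth $G$-manifolds. Two consequences I would extract: the normal bundles satisfy $Ni_n \cong j_n^\ast Ni$ (so that $j_n^\ast \Th_Z(Ni) \simeq \Th_{Z_n}(Ni_n)$ in $\Ret{G}{Z_n}$), and by \autoref{prop:open-embedding} we have $(j_n)_\sharp \simeq (j_n)_!$.

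Unwinding \autoref{def:forgetful-functor-retractive-spaces}, $(j_n)_\sharp S^0_{Z_n}$ is the retractive $Z$-space $Z_n \amalg Z$; wedging these over $Z$ identifies the basepoint copies of $Z$ to yield $\vee_{n,Z} (j_n)_\sharp S^0_{Z_n} \simeq S^0_Z$. Applying the projection formula (\autoref{ex:sharp-pullback-smash}) to each $j_n$ and using that $\smashprod_Z$ distributes over wedge sums gives
\[
\vee_{n,Z} (j_n)_\sharp \Th_{Z_n}(Ni_n) \simeq \vee_{n,Z} \left( \Th_Z(Ni) \smashprod_Z (j_n)_\sharp S^0_{Z_n} \right) \simeq \Th_Z(Ni) \smashprod_Z S^0_Z \simeq \Th_Z(Ni).
\]

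Applying $i_\sharp$ (which preserves wedges as a left adjoint) and using $i_\sharp \circ (j_n)_\sharp = (i_n)_\sharp$ produces $i_\sharp \Th_Z(Ni) \simeq \vee_{n,X} (i_n)_\sharp \Th_{Z_n}(Ni_n)$ in $\Ret{G}{X}$. Passing to suspension spectra, invoking the fact from the proof of \autoref{prop:kw2} that $\Sigma^\infty$ commutes with $f_\sharp$, and combining with $(i_n)_\sharp \Sigma_{Z_n}^{Ni_n} \mathbb{S}_{Z_n} \simeq (i_n)_! \mathbb{S}_{Z_n}$ (using $\mathcal{L}_{i_n} = Ni_n[-1]$ by \autoref{ex:cotangent-complexes}) then yields the desired equivalence. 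The main obstacle is essentially bookkeeping: one must identify $(j_n)_\sharp S^0_{Z_n}$ correctly at the point-set level and handle the distributivity of $\smashprod_Z$ over $\vee_Z$ with care. Both steps hinge on $j_n$ being both open and closed, which is what synchronizes the pullback, forgetful pushforward, and exceptional pushforward along $j_n$.
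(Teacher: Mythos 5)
Your proof is correct, but it takes a genuinely different route from the paper. The paper's argument is geometric: it invokes \autoref{prop:kw2} to replace $i_\sharp \Th_Z(Ni)$ by the double mapping cylinder $C_X(X,X-Z)$, visually observes (as in \autoref{fig:fried-egg}) that collapsing the cylinder coordinate exhibits it as a wedge over $X$ of the corresponding cylinders for the $Z_n$, and then takes $\Sigma^\infty$ using that suspension spectra preserve wedges. Your argument is categorical: you decompose $S^0_Z \simeq \vee_{n,Z}(j_n)_\sharp S^0_{Z_n}$ directly, feed this into the projection formula so that $\Th_Z(Ni) \smashprod_Z (-)$ distributes over the wedge and lands you at $\vee_{n,Z}(j_n)_\sharp \Th_{Z_n}(Ni_n)$, and then push forward along $i_\sharp$ and spectrify. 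Both routes are sound; the paper's is shorter because the double mapping cylinder already makes the clopen decomposition visible, whereas yours avoids the explicit point-set model entirely and stays inside the six-functor calculus (projection, $i_\sharp (j_n)_\sharp = (i_n)_\sharp$, preservation of wedges by left adjoints, $\Sigma^\infty$ commuting with $f_\sharp$). Your identification $(i_n)_\sharp \Sigma_{Z_n}^{Ni_n}\mathbb{S}_{Z_n} = (i_n)_!\mathbb{S}_{Z_n}$ via $\mathcal{L}_{i_n}=Ni_n[-1]$ is exactly right, as is the observation that $j_n$ being both open and closed gives $Ni_n \cong j_n^\ast Ni$. One minor point worth noting: the bulk of your argument (through the wedge over $Z$) takes place entirely over $Z$ before ever pushing forward along $i$, which makes it more modular than the paper's version but also slightly longer.
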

\begin{proof} Via \autoref{prop:kw2}, there is a weak equivalence in $\Ret{G}{X}$ of the form $i_! S^0_Z \simeq C_X(X,X-Z)$, and we may collapse the double mapping cylinder along the time axis as in \autoref{fig:fried-egg}. From there it is clear to see that it can be decomposed as a wedge sum along the equivariant clopen components. The stable version of this statement follows from observing that taking suspension spectra commutes with wedges.
\end{proof}

\begin{example}\label{ex:inclusion-of-orbit} For $G$ a finite group, if $i\colon G/H \to M$ is the closed inclusion of an orbit into a smooth manifold $M$, then there are weak equivalences in $\GOS{M}$ of the form:
\begin{equation}\label{eqn:inclusion-of-orbit-cleaner-version}
\begin{aligned}
    i_! \mathbb{S}_{G/H} \simeq \Sigma^\infty i_\sharp(\pi_{G/H}^\ast \Th(T_x M)) \simeq \Sigma^\infty i_\sharp \left( \Th_{G/H} \left( \left. TM \right|_{ G/H } \right)\right),
\end{aligned}
\end{equation}
where $x$ is any point in the orbit $G/H$.
\end{example}
\begin{proof} By collapsing the double mapping cylinder down around the points in the orbit, we obtain the Thom spaces of the associated tangent spheres at each point in the orbit, glued along $G/H$ to $M$. Note however that for a chosen point $x$ in the orbit, its tangent space inherits an $H$-action,. Thus each Thom space is naturally an $H$-representation sphere $\Th(T_x M)$. The residual $G$-action comes from permuting the representation spheres around between points in the orbit to get $(G/H) \times \Th(T_x M)$. Finally, in order to obtain the collapse of the double mapping cylinders, we glue to $M$ along the orbit $G/H$. This gives an equivalence
\begin{align*}
    C_M(M,M-G/H) \simeq \left( (G/H) \times \Th(T_x M) \right) \cup_{G/H} M,
\end{align*}
This yields the first equivalence in \autoref{eqn:inclusion-of-orbit-cleaner-version}. If $G$ is further assumed to be finite, the tangent space of $G/H$ is trivial, hence the normal bundle $Ni$ agrees with the tangent space $\left. TM \right|_{ G/H }$. In particular we see that
\begin{equation}\label{eqn:thom-space-orbit-tangent}
\begin{aligned}
\Th_{G/H} \left( \left. TM \right|_{ G/H } \right) \simeq \pi_{G/H}^\ast \Th(T_x M).
\end{aligned}\qedhere
\end{equation}
\end{proof}

\begin{corollary}\label{cor:decomposition-cohomology-compact-supports} Cohomology with compact supports decomposes over its support, in the sense that there is a group isomorphism
\begin{align*}
    A_Z^{i^\ast \xi}(X) \cong \bigoplus_n A_{Z_n}^{i_n^\ast\xi}(X).
\end{align*}
\end{corollary}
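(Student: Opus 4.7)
The plan is to derive the claim as an immediate consequence of the wedge decomposition of $i_!\spherespec_Z$ already established in \autoref{prop:decomp}. Specifically, that proposition (together with \autoref{prop:kw2}, which identifies $i_\sharp \Th_Z(Ni)$ with $i_!\spherespec_Z$) provides a weak equivalence in $\GOS{X}$ of the form
\[
    i_!\spherespec_Z \;\simeq\; \bigvee_n (i_n)_! \spherespec_{Z_n},
\]
where the wedge is indexed by the equivariant clopen components $Z = \amalg_n Z_n$ from \autoref{term:clopen-components}.

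The first step is to apply the contravariant functor $[-, \Sigma_X^\xi \pi_X^\ast A]_X$ to this equivalence, where $\xi \in \Perf(\KO_G(X))$ is the underlying perfect complex on $X$ whose pullbacks $i^\ast \xi$ and $i_n^\ast \xi$ appear as twist labels in the statement (so that the spectrum $\Sigma_X^\xi \pi_X^\ast A$ is a fixed target depending only on the ambient data). The second step is to invoke the standard property that homotopy classes of maps out of a wedge of spectra split as a product:
\[
    \Bigl[\,\bigvee_n S_n, \; Y \,\Bigr]_X \;\cong\; \prod_n \bigl[ S_n, \; Y\bigr]_X.
\]
Under the standing compactness conventions, $Z$ has only finitely many equivariant clopen components, so this product agrees with the direct sum, and the third step is to identify each summand with $A_{Z_n}^{i_n^\ast \xi}(X)$ directly from \autoref{def:twisted-cohom-supports}.

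There is essentially no technical obstacle here: the substantive geometric content, namely the wedge decomposition of the double mapping cylinder along its equivariant clopen components, has already been absorbed into \autoref{prop:decomp}. The only point that warrants care is the bookkeeping of the twist, but because $\Sigma_X^\xi \pi_X^\ast A$ is a single spectrum on $X$ that is unchanged as we decompose the source, the twist passes unchanged to each factor, which is precisely why the superscripts $i^\ast \xi$ and $i_n^\ast \xi$ match across the isomorphism.
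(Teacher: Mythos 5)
Your proposal is correct and follows the same route as the paper: apply $[\,-\,,\Sigma_X^\xi\pi_X^\ast A]_X$ to the wedge decomposition $i_!\spherespec_Z\simeq\bigvee_n(i_n)_!\spherespec_{Z_n}$ from \autoref{prop:decomp} and read off each summand via \autoref{def:twisted-cohom-supports}. The only addition you make is to spell out the finiteness used to turn the product into a direct sum, which the paper leaves implicit.
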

\begin{proof} We see that \autoref{prop:decomp} induces an isomorphism
\begin{equation*}
\begin{aligned}
     A_Z^{i^\ast \xi} = \left[ i_! \spherespec_Z, \Sigma_X^{\xi} \pi_X^\ast A \right]_X \cong \bigoplus_{n} \left[ (i_n)_! \spherespec_{Z_n}, \Sigma_X^\xi \pi_X^\ast A \right]_X = \bigoplus_n A_{Z_n}^{i_n^\ast \xi}(X).
\end{aligned}\qedhere
\end{equation*}
\end{proof}

\subsection{Integration}

We can push cohomology classes forward along smooth proper families or closed immersions. This comes at the cost of ``untwisting'' by a cotangent complex.
\begin{proposition}\label{prop:pushforward} Let $f\colon X \to Y$ be a smooth $G$-map between smooth compact $G$-manifolds, and suppose that it is either a closed immersion or a smooth proper family. Then for any $\xi\in \Perf(\KO_G(Y))$, the Pontryagin--Thom transfer induces a pushforward
\begin{align*}
    f_\ast \colon A^{\mathcal{L}_f + f^\ast \xi}(X) \to A^{\xi}(Y).
\end{align*}
\end{proposition}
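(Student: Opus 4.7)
The plan is to construct $f_\ast$ as the composite of an adjunction isomorphism with precomposition by the Pontryagin--Thom transfer. First I would unwind both sides as hom-sets in the homotopy category of parametrized spectra: on the source we have classes $\alpha \colon \spherespec_X \to \Sigma_X^{\mathcal{L}_f + f^\ast \xi} \pi_X^\ast A$ in $\GOS{X}$, and on the target we have classes $\mathbb{S}_Y \to \Sigma_Y^\xi \pi_Y^\ast A$ in $\GOS{Y}$. The bridge between them is the exceptional adjunction $f_! \dashv f^!$ of \autoref{def:exceptional-adjunction}, applied in the form
\begin{align*}
    [\spherespec_X, \Sigma_X^{\mathcal{L}_f+ f^\ast \xi}\pi_X^\ast A]_X \cong [f_! \spherespec_X, \Sigma_Y^\xi \pi_Y^\ast A]_Y.
\end{align*}

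To verify this isomorphism, I would compute $f^! \Sigma_Y^\xi \pi_Y^\ast A$ explicitly. By definition $f^! = \Sigma_X^{\mathcal{L}_f} f^\ast$, and by \autoref{prop:thom-tr-pullback-pushf} the pullback $f^\ast$ commutes with the Thom transformation up to a canonical equivalence $f^\ast \Sigma_Y^\xi \simeq \Sigma_X^{f^\ast \xi} f^\ast$. Combined with $\pi_Y \circ f = \pi_X$ (so $f^\ast \pi_Y^\ast A \simeq \pi_X^\ast A$), this produces a natural weak equivalence $f^! \Sigma_Y^\xi \pi_Y^\ast A \simeq \Sigma_X^{\mathcal{L}_f + f^\ast \xi}\pi_X^\ast A$, which on passing to homotopy classes gives the asserted isomorphism.

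Next I would invoke the Pontryagin--Thom transfer $\PT(f) \colon \spherespec_Y \to f_! \spherespec_X$. For a smooth proper family, this is the unit of the adjunction in \autoref{thm:hu} evaluated at the sphere (\autoref{ex:PT-to-point}); for a closed $G$-embedding, this is the transfer of \autoref{prop:PT-closed-inclusion}. In either case, precomposition with $\PT(f)$ yields a map
\begin{align*}
    [f_! \spherespec_X, \Sigma_Y^\xi \pi_Y^\ast A]_Y \to [\spherespec_Y, \Sigma_Y^\xi \pi_Y^\ast A]_Y = A^\xi(Y).
\end{align*}
Composing this with the adjunction isomorphism above defines the pushforward $f_\ast$.

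Since each ingredient is a genuine map of abelian groups (adjunctions and precomposition both respect the abelian group structure on hom-sets in the stable homotopy category), $f_\ast$ is a well-defined homomorphism. The only delicate point is that the isomorphism $f^! \Sigma_Y^\xi \pi_Y^\ast A \simeq \Sigma_X^{\mathcal{L}_f + f^\ast \xi}\pi_X^\ast A$ depends on a choice of equivalence produced by \autoref{prop:thom-tr-pullback-pushf} and the identification of perfect complexes; by \autoref{cor:thom-tr-infinity-functor} the resulting equivalence of Thom transformations is canonical in the homotopy category, so the pushforward is independent of any choices. No single step here is hard, and the two cases (closed immersion, smooth proper family) are handled uniformly once the appropriate PT transfer is supplied by \autoref{sec:PT}.
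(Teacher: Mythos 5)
Your argument is correct, and since the paper states \autoref{prop:pushforward} without supplying a proof, your construction is exactly the one the author has in mind: unwind both sides as hom-sets, use the $(f_!,f^!)$-adjunction together with the identifications $f^\ast\Sigma_Y^\xi \simeq \Sigma_X^{f^\ast\xi}f^\ast$ and $f^\ast\pi_Y^\ast A \simeq \pi_X^\ast A$ to pass to $[f_!\spherespec_X, \Sigma_Y^\xi\pi_Y^\ast A]_Y$, and then precompose with $\PT(f)\colon\spherespec_Y\to f_!\spherespec_X$. This is also consistent with how the paper itself unpacks pushforwards later (see the description in \autoref{prop:pushforward-or-forget-and-pushforward} that the pushforward along $\pi_M$ is precomposition with the unit $\spherespec\to(\pi_M)_!\spherespec_M$), so you have reconstructed the intended proof.
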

% \begin{proof} We see that $A^{\mathcal{L}_f + f^\ast \xi}(X) = \left[ \spherespec_X, f^! \Sigma_Y^{\xi} \pi_Y^\ast A \right]_X$. Invoking that $f$ is smoothable proper, precomposition with the Pontryagin--Thom transfer $\spherespec_Y \to f_! \spherespec_X$ induces the desired pushforward.
% \end{proof}

\begin{example}\label{ex:pushforward-to-point} Let $M$ be a smooth proper $G$-manifold. Then there is a pushforward
\begin{align*}
    \left( \pi_M \right)_\ast \colon A^{TM}(M) \to A^0(\ast) = \pi_0^G A.
\end{align*}
\end{example}

\begin{proposition}\label{prop:pushforward-or-forget-and-pushforward} Let $Z \subseteq M$ be a closed subspace. Then the following diagram commutes
\[ \begin{tikzcd}
    A_Z^{TM}(M)\dar["\cong" left]\rar["\text{forget}"] & A^{TM}(M)\dar["(\pi_M)_\ast" right]\\
    A^{TZ}(Z)\rar["(\pi_Z)_\ast" below] & A^0(\ast),
\end{tikzcd} \]
where the left vertical isomorphism is the canonical one from \autoref{prop:twisted-tangent-bundle}.
\end{proposition}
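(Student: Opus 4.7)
The strategy is to rewrite both compositions as precompositions with a single map $\spherespec \to A$, after transposing across the relevant adjunctions. The essential content is \autoref{lem:PT-compatibility} together with the functoriality of the exceptional pushforward from \autoref{prop:shrieks-compose}.

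First I would unwind the top-right composite. A class $\alpha \in A_Z^{TM}(M) = [i_! \spherespec_Z, \Sigma_M^{TM}\pi_M^\ast A]_M$ is sent under forgetting support to $\alpha \circ \PT(i) \in A^{TM}(M) = [\spherespec_M, \Sigma_M^{TM}\pi_M^\ast A]_M$. Because $\pi_M$ is a smooth proper family, $(\pi_M)_! \simeq (\pi_M)_\sharp \Sigma_M^{-TM}$, so the adjunction $(\pi_M)_\sharp \dashv \pi_M^\ast$ identifies $A^{TM}(M)$ with $[(\pi_M)_!\spherespec_M, A]$. Under this transpose, $(\pi_M)_\ast$ is precomposition with $\PT(\pi_M)$. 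Thus the top-right route applied to $\alpha$ corresponds to the composite
\[ \spherespec \xrightarrow{\PT(\pi_M)} (\pi_M)_! \spherespec_M \xrightarrow{(\pi_M)_!\PT(i)} (\pi_M)_! i_! \spherespec_Z \xrightarrow{\widetilde{\alpha}} A, \]
where $\widetilde{\alpha}$ is the mate of $\alpha$ under the adjunction $(\pi_M\circ i)_\sharp \dashv (\pi_M\circ i)^\ast$, combined with the Thom desuspension encoded in $(\pi_Z)_! = (\pi_M)_! i_!$ from \autoref{prop:shrieks-compose}.

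For the left-bottom route, the isomorphism from \autoref{prop:twisted-tangent-bundle} is built from the adjunction $i_! \dashv i^!$, the commutation of $i^\ast$ with Thom transformations via \autoref{prop:thom-tr-pullback-pushf}, and the short exact sequence identification $\Sigma_Z^{-Ni}\Sigma_Z^{i^\ast TM} \cong \Sigma_Z^{TZ}$. Applied to $\alpha$, this yields a class $\alpha' \in A^{TZ}(Z)$, whose image under $(\pi_Z)_\ast$ transposes to the composite
\[ \spherespec \xrightarrow{\PT(\pi_Z)} (\pi_Z)_! \spherespec_Z \xrightarrow{\widetilde{\alpha}'} A. \]
The naturality of the adjunction isomorphisms entering \autoref{prop:twisted-tangent-bundle}, together with the associativity isomorphism $(\pi_Z)_! \simeq (\pi_M)_! i_!$ of \autoref{prop:shrieks-compose}, will identify $\widetilde{\alpha}'$ with $\widetilde{\alpha}$. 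With this identification in place, the commutativity of the diagram reduces precisely to the equality of the two composites $\PT(\pi_Z)$ and $(\pi_M)_!\PT(i) \circ \PT(\pi_M)$, which is the content of \autoref{lem:PT-compatibility}.

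The main obstacle is the coherence check that $\widetilde{\alpha}'$ coincides with $\widetilde{\alpha}$ under $(\pi_Z)_! \simeq (\pi_M)_! i_!$. This is a mate-of-adjunctions calculation tracking the Thom transformations $\Sigma_M^{-TM}$, $\Sigma_Z^{-Ni}$, and $\Sigma_Z^{TZ}$ through the chain of adjunctions; while conceptually routine once all the functorialities are assembled, it is where the bulk of the bookkeeping lives. Once this identification is verified, \autoref{lem:PT-compatibility} closes the argument in one line.
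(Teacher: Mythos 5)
Your proposal is correct and follows essentially the same route as the paper: rewrite both sides in terms of maps $(\pi_M)_! i_!\spherespec_Z \to A$ using the adjunction $\pi_!\dashv\pi^!$, identify $(\pi_Z)_! \simeq (\pi_M)_! i_!$ via \autoref{prop:shrieks-compose}, and reduce commutativity to \autoref{lem:PT-compatibility}. The coherence check you flag (that the mates agree under the composite identification) is also implicit in the paper's short proof, so your account is, if anything, slightly more careful about what still needs to be verified.
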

\begin{proof} Observe that in the top left we can rewrite
\begin{align*}
    A_Z^{TM}(M) &= \left[ i_! \spherespec_Z, \pi_M^! A \right]_X \cong \left[ \left( \pi_M \right)_! i_! \spherespec_Z, A \right]_X.
\end{align*}
The forgetful map is induced by the Pontryagin--Thom transfer $\spherespec_M \to i_! \spherespec_Z$ as in \autoref{def:forget-supports}, while the pushforward on $M$ is precomposition with the unit $\spherespec \to \left( \pi_M \right)_! \spherespec_M$. The pushforward from $Z$ comes from recognizing that $\left( \pi_M \right)_! i_! = \left( \pi_Z \right)_!$ via \autoref{prop:shrieks-compose}, and using the transfer $\spherespec_M \to \left( \pi_Z \right)_! \spherespec_Z$. The fact that the Pontryagin--Thom transfers along $i$ and $\pi_M$ compose to the unit map along $\pi_Z$ is \autoref{lem:PT-compatibility}.
\end{proof}

\begin{proposition}\label{prop:pushforward-or-decompose-and-pushforward} Let $Z = \amalg_n Z_n$ be a decomposition into its equivariant clopen components, following the notation in \autoref{prop:decomp}. Then the following diagram commutes:
\[ \begin{tikzcd}
    \bigoplus_n A^{TZ_n}(Z_n)\ar[dr,"\oplus_n (\pi_{Z_n})_\ast" above right]\dar["\cong" left]\\
    A^{TZ}(Z)\rar["(\pi_Z)_\ast" below] & A^0(\ast),
\end{tikzcd} \]
where the left vertical map is the decomposition isomorphism in \autoref{cor:decomposition-cohomology-compact-supports}.
\end{proposition}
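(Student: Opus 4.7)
The plan is to observe that the disjoint-union decomposition $Z = \amalg_n Z_n$ propagates through every relevant construction, so the diagram commutes because the Pontryagin--Thom transfer along $\pi_Z$ splits as the sum of the transfers along the $\pi_{Z_n}$.

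First I would identify the left vertical isomorphism concretely. Each equivariant clopen component $j_n\colon Z_n \hookto Z$ is simultaneously an open and a closed $G$-embedding. Since $Z$ is set-theoretically the disjoint union $\amalg_n Z_n$, we obtain an equivalence in $\GOS{Z}$
\begin{align*}
    \mathbb{S}_Z \simeq \bigvee_n (j_n)_\sharp \mathbb{S}_{Z_n} = \bigvee_n (j_n)_! \mathbb{S}_{Z_n},
\end{align*}
where the second identification uses \autoref{prop:open-embedding}. Applying $(\pi_Z)_!$ and using \autoref{prop:shrieks-compose} together with $\pi_Z \circ j_n = \pi_{Z_n}$ yields
\begin{align*}
    (\pi_Z)_! \mathbb{S}_Z \simeq \bigvee_n (\pi_{Z_n})_! \mathbb{S}_{Z_n}.
\end{align*}
Passing to $[-, A]$ and using the identification $A^{TW}(W) \cong [(\pi_W)_! \mathbb{S}_W, A]$ established in the proof of \autoref{prop:twisted-tangent-bundle}, this is the decomposition isomorphism viewed through \autoref{cor:decomposition-cohomology-compact-supports} applied with $X = Z$ and $\xi = TZ$, noting that $A_Z^{TZ}(Z) = A^{TZ}(Z)$ and $A_{Z_n}^{TZ}(Z) \cong A^{TZ_n}(Z_n)$.

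Second, both pushforwards in the diagram are defined as precomposition with a Pontryagin--Thom transfer, so it suffices to show that under the splitting above $\PT(\pi_Z)\colon \mathbb{S} \to (\pi_Z)_! \mathbb{S}_Z$ corresponds to the sum of the transfers $\PT(\pi_{Z_n})$. I would prove this by applying \autoref{lem:PT-compatibility} to each closed immersion $j_n$: the composite
\begin{align*}
    \mathbb{S} \xto{\PT(\pi_Z)} (\pi_Z)_! \mathbb{S}_Z \xto{(\pi_Z)_! \PT(j_n)} (\pi_{Z_n})_! \mathbb{S}_{Z_n}
\end{align*}
is equivalent to $\PT(\pi_{Z_n})$. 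Since $j_n$ is also an open embedding its normal bundle vanishes, and the transfer $\PT(j_n)$ is the canonical projection of $\mathbb{S}_Z \simeq \bigvee_k (j_k)_! \mathbb{S}_{Z_k}$ onto its $n$th wedge summand. Assembling all $n$ together therefore identifies $\PT(\pi_Z)$ with $\sum_n \PT(\pi_{Z_n})$ under the splitting, which gives exactly the required commutativity.

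The only point that demands care, rather than a serious obstacle, is the verification that $\PT(j_n)$ really is the wedge-summand projection. This can be read off from the explicit construction of the transfer in \cite[18.6.3]{MS}: a tubular neighborhood of a clopen embedding is the clopen subset itself, so the Pontryagin--Thom collapse degenerates to the obvious projection. Everything else is bookkeeping with the naturality of the $f_\sharp \dashv f^\ast$ and $f_! \dashv f^!$ adjunctions.
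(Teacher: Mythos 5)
Your proposal is correct and takes essentially the same route as the paper's proof: both reduce to the compatibility of Pontryagin--Thom transfers along the clopen closed immersions $j_n\colon Z_n \hookto Z$ (the paper by black-boxing \autoref{prop:pushforward-or-forget-and-pushforward}, you by invoking \autoref{lem:PT-compatibility} directly), and both then recognize the sum over $n$ as the decomposition isomorphism of \autoref{cor:decomposition-cohomology-compact-supports}. Your version is just more explicit at the spectrum level, unwinding the wedge splitting $\mathbb{S}_Z \simeq \bigvee_n (j_n)_! \mathbb{S}_{Z_n}$ and identifying $\PT(j_n)$ with the wedge projection, which is a detail the paper leaves implicit.
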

\begin{proof} We remark that a cohomology class on $A^{TZ_n}(Z_n)$ can be understood by pushing forward directly, or by forgetting support and then pushing forward via \autoref{prop:pushforward-or-forget-and-pushforward}. That is, for any $n$, the diagram commutes:
\[ \begin{tikzcd}
    A^{TZ_n}(Z_n)\ar[dr,"(\pi_{Z_n})_\ast" above right]\dar\\
    A^{TZ}(Z)\rar["(\pi_Z)_\ast" below] & A^0(\ast).
\end{tikzcd} \]
Applying \autoref{cor:decomposition-cohomology-compact-supports}, we see that when we sum over $n$, the left vertical map becomes an isomorphism.
\end{proof}

\subsection{Abstract orientation data}

As we have seen in \autoref{prop:pushforward-or-forget-and-pushforward} and \autoref{prop:pushforward-or-decompose-and-pushforward}, given a cohomology class in $A_Z^{TM}(M)$, we can study it in two ways --- by forgetting its support and pushing it forward, or by decomposing it over its support and pushing each of the individual contributions forward then summing. We have indicated that this is an equality in $\pi_0^G A$.

We will be interested in the more general situation where we are twisting by a bundle $E \to M$ which is not the tangent bundle. To study this, we need to find a way to relate the $E$-twisted cohomology $A^E(-)$ with the cohomology twisted by the tangent bundle $A^{TM}(-)$.

\begin{definition}\label{def:rel-o} We say that a rank $n$ bundle $E \to M$ over a $G$-manifold of dimension $n$ is \textit{relatively $A$-oriented} if there is an isomorphism
\begin{equation}\label{eqn:rel-o}
\begin{aligned}
    \rho \colon \Sigma_M^E \pi_M^\ast A \simeq \Sigma_M^{TM} \pi_M^\ast A.
\end{aligned}
\end{equation}
Such a choice of isomorphism we call a \textit{relative orientation}.
\end{definition}

In \autoref{subsec:cx-orientations}, we will see that complex oriented cohomology theories enjoy a canonical choice of relative orientations, coming from the Thom isomorphism. More generally, once we have a bundle which is relatively oriented in a ring spectrum $A$, we can \textit{push forward} cohomology classes. That is, if $E \to M$ is a rank $n$ complex bundle over a $G$-manifold of dimension $n$, then we can push forward a class in $A^E(M)$ using a relative orientation $\rho$:
\[ \begin{tikzcd}
    A^E(M)\rar["\rho" above]\ar[dr,bend right=10,dashed] & A^{TM}(M)\dar["(\pi_M)_\ast" right] \\
     & A^0(\ast).
\end{tikzcd} \]
Note that if $i \colon Z \hookto M$ is the inclusion of a closed subspace, by applying $i^\ast$ to \autoref{eqn:rel-o}, we obtain an isomorphism of the restricted vector bundle with the tangent bundle on $V$:
\begin{align*}
    \left. \rho \right|_{ Z } \colon \Sigma_Z^{\left. E \right|_{ Z }} \pi_Z^\ast A \simeq \Sigma_Z^{TZ} \pi_Z^\ast A.
\end{align*}
In other words, a relative orientation for $E\to X$ in $A$ descends to compactly supported cohomology groups.

\begin{proposition}\label{prop:compute-locally-in-rel-o} Let $E \to M$ be a rank $n$ complex $G$-vector bundle over a smooth $n$-dimensional $G$-manifold, equipped with a relative $A$-orientation $\rho$. Suppose that $\sigma \colon M \to E$ is a section with zero locus $Z=Z(\sigma)$, which decomposes into clopen components $Z(\sigma) = \amalg_n Z_n$. Then the diagram commutes:
\[ \begin{tikzcd}
    A_Z^E(M)\dar["\cong" left]\rar["\text{forget}"]  & A^E(M)\rar["\rho"]  & A^{TM}(M)\dar["(\pi_M)_\ast" right]\\
    \bigoplus_n A_{Z_n}^{\left. E \right|_{ Z_n }}(M)\rar["\oplus_n \left. \rho \right|_{ Z_n }" below] & \bigoplus_{n} A^{TZ_n}(Z_n)\rar["\oplus_n (\pi_{Z_n})_\ast" below] & A^0(\ast).
\end{tikzcd} \]
\end{proposition}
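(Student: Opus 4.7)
The plan is to reduce the commutativity claim to the two companion results \autoref{prop:pushforward-or-forget-and-pushforward} and \autoref{prop:pushforward-or-decompose-and-pushforward} by using the relative orientation $\rho$ to convert every $E$-twisted cohomology group appearing in the diagram into a tangent-bundle-twisted one. The key observation is that the forget map is induced by precomposition with the Pontryagin--Thom transfer $\PT(i)\colon \spherespec_M \to i_! \spherespec_Z$ on the source of the hom-set, while $\rho$ (and each restriction $\rho|_{Z_n}$) acts by post-composition on the target; the two operations therefore commute freely. Likewise, the decomposition of \autoref{cor:decomposition-cohomology-compact-supports}, being induced by the wedge decomposition $i_! \spherespec_Z \simeq \bigvee_n (i_n)_! \spherespec_{Z_n}$ of \autoref{prop:decomp}, is natural in the target spectrum and so also commutes with $\rho$.

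These naturalities allow me to factor both paths of the main diagram through a single common preliminary application of $\rho$, reducing to an equality of two maps $A_Z^{TM}(M) \to A^0(\ast)$. The top path becomes $A_Z^{TM}(M) \xto{\text{forget}} A^{TM}(M) \xto{(\pi_M)_*} A^0(\ast)$. The bottom path becomes $A_Z^{TM}(M) \cong \bigoplus_n A_{Z_n}^{TM}(M) \cong \bigoplus_n A^{TZ_n}(Z_n) \xto{\oplus_n (\pi_{Z_n})_*} A^0(\ast)$, where the first isomorphism comes from \autoref{cor:decomposition-cohomology-compact-supports} and the second is the componentwise canonical identification of \autoref{prop:twisted-tangent-bundle}.

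The equality of these two simplified compositions then follows by stitching together \autoref{prop:pushforward-or-forget-and-pushforward} and \autoref{prop:pushforward-or-decompose-and-pushforward} along the common edge $(\pi_Z)_*\colon A^{TZ}(Z) \to A^0(\ast)$. The former identifies the upper composite with $A_Z^{TM}(M) \cong A^{TZ}(Z) \xto{(\pi_Z)_*} A^0(\ast)$, and the latter identifies this in turn with the sum $\oplus_n (\pi_{Z_n})_*$ under the clopen decomposition $A^{TZ}(Z) \cong \bigoplus_n A^{TZ_n}(Z_n)$ coming from $Z = \amalg_n Z_n$.

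The main obstacle I expect is verifying the coherence between the decomposition isomorphism of \autoref{cor:decomposition-cohomology-compact-supports} and the tangent-bundle exchange of \autoref{prop:twisted-tangent-bundle}: namely, that the decomposition $A_Z^{TM}(M) \cong \bigoplus_n A_{Z_n}^{TM}(M)$ intertwines the componentwise application of \autoref{prop:twisted-tangent-bundle} with the manifest clopen decomposition $A^{TZ}(Z) \cong \bigoplus_n A^{TZ_n}(Z_n)$. Since both decompositions trace back to the wedge decomposition of the source spectra in \autoref{prop:decomp} and the $i^!\dashv i_!$ adjunction used in the proof of \autoref{prop:twisted-tangent-bundle} carries wedge sums to wedge sums, this should be a routine diagram chase using \autoref{prop:shrieks-compose} and \autoref{prop:thom-tr-pullback-pushf}, but it is the one coherence check deserving explicit care in the write-up.
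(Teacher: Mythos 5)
Your proposal is correct and takes essentially the same route as the paper, whose proof is simply the one-line assertion that commutativity follows from \autoref{prop:pushforward-or-forget-and-pushforward} and \autoref{prop:pushforward-or-decompose-and-pushforward}. You have supplied the naturality arguments (forget acts by precomposition on the source, $\rho$ by postcomposition on the target, so they commute; the decomposition of \autoref{cor:decomposition-cohomology-compact-supports} is natural in the target spectrum) and the coherence check between the clopen decomposition and \autoref{prop:twisted-tangent-bundle} that the paper leaves implicit, which is a genuine improvement in explicitness but not a different argument.
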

\begin{proof} This follows directly from \autoref{prop:pushforward-or-forget-and-pushforward} and \autoref{prop:pushforward-or-decompose-and-pushforward}.
\end{proof}
Thus in the presence of a relative orientation, cohomology classes in $A_Z^E(M)$ can be studied by forgetting support and pushing forward, or decomposing, pushing foward, and then summing.

\section{Equivariant conservation of number}\label{sec:conservation}
Here we define refined Euler classes associated to sections of complex vector bundles, valued in an equivariant cohomology theory. \autoref{prop:compute-locally-in-rel-o} indicates that these can be computed as a sum over the local contributions of each of the components of the zero locus of the section. When the cohomology theory $A$ is \textit{complex oriented}, and the zeros are simple and isolated, we demonstrate a tractable formula for the local indices. This gives us an equality in $\pi_0^G A$ which is independent of the choice of section.
\subsection{Refined Euler classes}

We fix some notation for this section.

\begin{setup}\label{set:euler-class-setup} Let $G$ be a finite group, and let $E \to M$ be a $G$-equivariant vector bundle of dimension $n$ over a smooth compact $n$-manifold (real or complex). Let $Z = Z(\sigma)$ be the zero locus, with inclusion map $i \colon Z \hookto M$. Let $A \in \mathrm{CAlg}(\Sp_G)$ be a genuine $G$-ring spectrum.
\end{setup}

The section $\sigma$ induces a map of pairs $(M,M-Z) \to (E,E-M)$, where $M \subseteq E$ is the zero section. This induces an equivariant map of double mapping cylinders $C_M(M,M-Z) \to C_E(E,E-M)$, which can be regarded as a map
\begin{equation}\label{eqn:double-mapping-cylinder-pairs}
\begin{aligned}
    i_! \mathbb{S}_Z \to \Sigma^E_M \mathbb{S}_M.
\end{aligned}
\end{equation}
\begin{definition}\label{def:refined-euler-class} In the language of \autoref{set:euler-class-setup}, denote by $e(E,\sigma,Z) \in A^E_{Z}(M)$ the \textit{refined Euler class}, defined to be the composite
\begin{align*}
    i_! \mathbb{S}_Z \to \Sigma^E_M \mathbb{S}_M \xto{\Sigma_M^E 1} \Sigma_M^E \pi_M^\ast A,
\end{align*}
where the first map is \autoref{eqn:double-mapping-cylinder-pairs} and the second is the unit on $A$.
\end{definition}

Decomposing $Z$ into its equivariant clopen components $Z = \amalg Z_n$ (as in \autoref{term:clopen-components}), we can invoke \autoref{cor:decomposition-cohomology-compact-supports} to decompose the Euler class over its support:
\begin{align*}
    A_Z^E(M) &\xto{\sim} \bigoplus_n A_{Z_n}^E(M) \\
    e(E,\sigma,Z) &\mapsto \oplus_n e(E,\sigma, Z_n).
\end{align*}

\begin{definition}\label{def:local-index} When $E$ is equipped with a relative $A$-orientation, the image of $e(E,\sigma,Z_n)$ under pushforward is referred to as the \textit{local index}, denoted by $\ind_{Z_n}(\sigma)$:
\begin{align*}
    A^{E}_{Z_n}(M) \cong A^{TZ_n}(Z_n) \to &A^0(\ast) \\
    e(E,\sigma,Z_n) \longmapsto &\ind_{Z_n}(\sigma).
\end{align*}
We refer to the image of the (un)refined Euler class under pushforward as the \textit{Euler number}, and denote it by $n(E,\sigma)$:
\begin{align*}
    A_Z^{E} \xto{\text{forget}} A^{TM}(M) \to A^0(\ast)=\pi_0^GA \\
    e(E,\sigma,Z) \longmapsto n(E,\sigma).
\end{align*}
We are suppressing the notation for the relative orientation, but the reader should remark that these quantities depend upon the choice of relative orientation.
\end{definition}
With this terminology in hand, we can state the following lemma.
\begin{lemma}\label{lem:conservation}
In \autoref{set:euler-class-setup}, if $A$ is equipped with a relative $A$-orientation, then we obtain an equality in $\pi_0^G A$ of the form
\begin{align*}
    n(E,\sigma) = \sum_n \ind_{Z_n}(\sigma).
\end{align*}
Moreover, the value $n(E,\sigma)$ is independent of the choice of section $\sigma$, and only depends upon the relative orientation.
\end{lemma}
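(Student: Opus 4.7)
The lemma contains two assertions: the local-global decomposition $n(E,\sigma) = \sum_n \ind_{Z_n}(\sigma)$ in $\pi_0^G A$, and the independence of $n(E,\sigma)$ from the choice of section $\sigma$. The plan is to handle each in turn; the first is a diagram chase, while the second requires a small geometric argument.

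For the decomposition formula, apply \autoref{prop:compute-locally-in-rel-o} to the refined Euler class $e(E,\sigma,Z) \in A_Z^E(M)$. Tracing this class along the top row -- forgetting support, untwisting via $\rho$, and pushing forward to a point -- returns $n(E,\sigma)$ directly by \autoref{def:local-index}. Tracing instead down the left edge (the decomposition isomorphism of \autoref{cor:decomposition-cohomology-compact-supports}) and then across the bottom row (restricting $\rho$ to each $Z_n$ and pushing forward componentwise) gives $\bigoplus_n \ind_{Z_n}(\sigma)$, which sums to $\sum_n \ind_{Z_n}(\sigma)$ in $\pi_0^G A$. Commutativity of the diagram yields the equality.

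For independence, it suffices to show that $\mathrm{forget}(e(E,\sigma,Z)) \in A^E(M)$ is independent of $\sigma$, since the downstream maps $A^E(M) \xto{\rho} A^{TM}(M) \xto{(\pi_M)_\ast} \pi_0^G A$ depend only on the relative orientation. Unpacking definitions, and using \autoref{prop:kw2} to identify $i_! \spherespec_Z$ with the double mapping cylinder $C_M(M,M-Z)$, the forgotten class is represented by the composite
\[
    \spherespec_M \xto{\PT(i)} i_! \spherespec_Z \to \Sigma_M^E \spherespec_M \to \Sigma_M^E \pi_M^\ast A,
\]
where the middle arrow is induced by $\sigma$ at the level of pairs. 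Under the identification $\Sigma_M^E \spherespec_M \simeq \Th_M(E)$, I would show that the first two arrows compose (up to homotopy) to the map $\widetilde\sigma \colon S^0_M \to \Th_M(E)$ in $\GOS{M}$ arising from viewing $\sigma$ as a section of the fiberwise inclusion $E \hookrightarrow \Th_M(E)$. Given this identification, any two equivariant sections $\sigma_0,\sigma_1$ of $E$ are connected through equivariant sections by the straight-line homotopy $\sigma_t(x) = (1-t)\sigma_0(x) + t\sigma_1(x)$ (equivariant because the $G$-action on each fiber is linear), whence the $\widetilde{\sigma_j}$ are homotopic in $\GOS{M}$ and produce the same class in $A^E(M)$. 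The main obstacle is precisely the geometric identification of $\PT(i)$ composed with the $\sigma$-induced map of double mapping cylinders as $\widetilde\sigma$: this reflects the classical intuition that Pontryagin--Thom collapse ``unwinds'' a section onto its normal-bundle contributions, and is best carried out with explicit tubular-neighborhood models for both the transfer and the cylinders, after which the straight-line homotopy completes the argument.
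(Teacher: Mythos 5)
Your proof takes essentially the same approach as the paper: the decomposition formula is the same diagram chase through \autoref{prop:compute-locally-in-rel-o}, and the independence argument likewise reduces to the homotopy invariance of the (unrefined) Euler class in $A^E(M)$ together with the observation that any two equivariant sections are linearly homotopic. Your version is actually more explicit than the paper's terse two-sentence treatment of independence: you correctly flag that one must identify the composite $\spherespec_M \xto{\PT(i)} i_!\spherespec_Z \to \Th_M(E)$ with the map induced directly by $\sigma$, a step the paper leaves implicit when it says ``$n(E,\sigma)$ was defined up to the homotopy class of $\sigma$,'' and this identification is indeed the classical tubular-neighborhood fact underlying the Pontryagin--Thom construction of Euler classes.
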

\begin{proof} We obtain the desired equality by following the Euler class $e(E,\sigma,Z)$ in the commutative diagram of \autoref{prop:compute-locally-in-rel-o}.
Thus we can compute the Euler number by decomposing it over its support, and summing over the local indices. To see that this is independent of $\sigma$, we remark that $n(E,\sigma)$ was defined up to the homotopy class of $\sigma$. Since every section can be $G$-equivariantly homotoped to the zero section, we observe that $n(E,\sigma)$ is independent of $\sigma$.
\end{proof}

\begin{terminology}
We say that $\sigma$ has an \textit{isolated zero} at a point $x\in M$ if $\{x\}$ is both closed and open in $Z(\sigma) \subseteq M$. We say $\sigma$ has a \textit{simple zero} at $x\in M$ if the zero is simple in the classical sense --- meaning the Jacobian determinant of $\sigma$ at $x$ is non-vanishing. Note that the action of $G$ preserves the properties of being isolated and simple.
\end{terminology}

\begin{proposition}\label{prop:local-euler-class-at-orbit} In \autoref{set:euler-class-setup}, if $G/H \subseteq Z$ is a clopen component, and $x\in G/H$ is a simple isolated zero of $\sigma$, then there is a natural equivalence
\begin{align*}
    A_{G/H}^{E}(G/H) &\cong \left[ \pi_{G/H}^\ast \Th(T_xM), \Sigma_{G/H}^{\left. E \right|_{ G/H }} \pi_{G/H}^\ast A \right],
\end{align*}
under which the Euler class $e(M,\sigma,G/H)$ corresponds to the composite of the intrinsic derivative $d_x \sigma$ and the unit map on $A$:
\begin{align*}
    (G/H) \times \Th(T_x M) \to (G/H) \times\Th(E_x) \to (G/H) \times (\Th(E_x) \smashprod A).
\end{align*}
\end{proposition}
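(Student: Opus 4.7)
The plan is to prove the natural equivalence by chaining the identification in Example~\ref{ex:inclusion-of-orbit} with the $(i_\sharp,i^\ast)$ adjunction and the commutation of pullback with Thom transformations from Proposition~\ref{prop:thom-tr-pullback-pushf}, and then to track the refined Euler class through that chain of identifications.

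First I would start from the definition
\[
A_{G/H}^{E}(M) = \left[ i_! \mathbb{S}_{G/H},\ \Sigma_M^E \pi_M^\ast A \right]_M.
\]
By Example~\ref{ex:inclusion-of-orbit} there is an equivalence $i_! \mathbb{S}_{G/H} \simeq i_\sharp \Sigma^\infty \pi_{G/H}^\ast \Th(T_x M)$ in $\GOS{M}$, valid because $G$ is finite and so $T(G/H)=0$ makes $Ni \cong TM|_{G/H}$. Applying the Quillen adjunction $i_\sharp \dashv i^\ast$ and then Proposition~\ref{prop:thom-tr-pullback-pushf} to commute $i^\ast$ past the Thom transformation (using $\pi_M \circ i = \pi_{G/H}$, so $i^\ast \pi_M^\ast \simeq \pi_{G/H}^\ast$) gives the natural equivalence
\[
\left[ i_! \mathbb{S}_{G/H},\ \Sigma_M^E \pi_M^\ast A \right]_M
\;\simeq\;
\left[ \pi_{G/H}^\ast \Th(T_x M),\ \Sigma_{G/H}^{E|_{G/H}} \pi_{G/H}^\ast A \right]_{G/H}.
\]

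Next I would track the refined Euler class $e(E,\sigma,G/H)$, which by Definition~\ref{def:refined-euler-class} factors through the map of pairs $(M,M-G/H)\to(E,E-M)$ induced by $\sigma$, followed by the unit of $A$. Using the equivariant tubular neighborhood theorem (already implicit in Example~\ref{ex:inclusion-of-orbit}), I would identify a $G$-invariant neighborhood of $G/H$ in $M$ with the total space of $TM|_{G/H}$ as a bundle over $G/H$. Since $\sigma$ has a simple isolated zero at $x$, and hence at every point of the orbit, its restriction to this tubular neighborhood is a $G$-equivariant bundle map $TM|_{G/H} \to E|_{G/H}$ whose fiber over $x$ is the $H$-equivariant isomorphism $d_x \sigma \colon T_x M \to E_x$. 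Passing to fiberwise Thom spaces over $G/H$ turns this into $(G/H)\times \Th(T_x M) \to (G/H)\times \Th(E_x)$.

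Under Example~\ref{ex:inclusion-of-orbit}, the $\sigma$-induced map $i_! \mathbb{S}_{G/H} \to \Sigma_M^E \mathbb{S}_M$ corresponds to $i_\sharp$ applied to this fiberwise derivative; transporting across the $(i_\sharp,i^\ast)$ adjunction and composing with the unit yields precisely the composite in the statement. The main obstacle is making this last correspondence precise: one must check that the tubular-neighborhood collapse used to identify $i_!\mathbb{S}_{G/H}$ with $i_\sharp\pi_{G/H}^\ast \Th(T_x M)$ intertwines the map of pairs coming from $\sigma$ with the fiberwise intrinsic derivative. Once that compatibility is secured — which is essentially the content of the simple-zero hypothesis together with equivariance of the tubular neighborhood — the remainder is a formal chase through the adjunction and the naturality of the unit $\mathbb{S}\to A$.
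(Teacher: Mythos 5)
Your proposal is correct and follows essentially the same route as the paper: use \autoref{ex:inclusion-of-orbit} to rewrite $i_!\mathbb{S}_{G/H}$ as $i_\sharp\pi_{G/H}^\ast\Th(T_xM)$, transport across the $(i_\sharp, i^\ast)$ adjunction while commuting $i^\ast$ past the Thom transformation, and then use the simple-zero hypothesis (via \autoref{eqn:thom-space-orbit-tangent}) to identify the resulting map with the fiberwise intrinsic derivative $d_x\sigma$. You are somewhat more explicit about the tubular-neighborhood compatibility that makes the Euler-class chase rigorous, but the identifications and the order in which they are applied match the paper's proof.
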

\begin{proof} Via \autoref{ex:inclusion-of-orbit}, the Euler class $e (M,\sigma, G/H)$ can be considered as a composite of the form
\begin{align*}
    i_\sharp j_\sharp \pi_{G/H}^\ast \Th(T_x M) \to \Sigma_M^E \spherespec_M,
\end{align*}
where $x\in G/H$ is any point in the orbit. By adjunction this is the same as \begin{align*}
    \pi_{G/H}^\ast \Th(T_x M) \to j^\ast i^\ast \Sigma_M^E \spherespec_M = \Sigma_{G/H}^{\left. E \right|_{ G/H }} \spherespec_{G/H}.
\end{align*}
If $x$ is assumed to be simple, then \autoref{eqn:thom-space-orbit-tangent} tells us this is precisely the map
\begin{align*}
    (G/H) \times \Th(T_x M) \xto{G/H \times d_x\sigma} (G/H) \times \Th(E_x),
\end{align*}
where $d_x\sigma$ is the intrinsic derivative of $\sigma$ at the point $x$.
\end{proof}

\begin{remark}\label{rmk:need-orientation-data}  Here is where orientation data is needed. We have an induced map between $H$-representation spheres of the same dimension, but this does not canonically give a class in the $H$-Burnside ring. The fact is while $S^{T_x M - E_x}$ is isomorphic to $S^0$ when $x$ is a simple zero, one must \textit{fix an isomorphism}, and there is no canonical way to do this. To circumvent this issue, we look at the map that the intrinsic derivative $S^{T_x M} \xto{d_x \sigma} S^{E_x}$ induces on a cohomology theory $A$, where $A$ comes equipped with some canonical orientation data. In particular for such a ring spectrum $A$, we get a composite:
\begin{align*}
    S^{T_x M} \xto{d_x \sigma \smashprod u} S^{E_x} \smashprod A \xto{\text{orientation data}} S^{T_x M} \smashprod A.
\end{align*}
This gives us a well-defined class in $\pi_0^H A$, and the associated local index is obtained by transferring this up to $G$ along the transfer available to us in the zeroth homotopy Mackey functor $\underline{\pi}_0 A$.\footnote{For the reader who may be unfamiliar with the language of Mackey functors, we refer them to the excellent introductory paper \cite{Webb}.} In practice we will be concerned with complex oriented equivariant ring spectra, where this ``orientation data'' is the data of a Thom class arising from a universal one.
\end{remark}

\subsection{Complex orientations in the equivariant setting}\label{subsec:cx-orientations}

\begin{definition} Let $\mathcal{U}$ denote a direct sum of infinitely many copies of each irreducible complex representation of $G$, and denote by
\begin{align*}
    \BU_G(n) := \Gr(\C^n, \mathcal{U}),
\end{align*}
the moduli space of $n$-dimensional subspaces of $\mathcal{U}$. Since $G$ acts naturally on $\mathcal{U}$, it acts on $\BU_G(n)$ as well, and $\BU_G(n)$ comes equipped with a tautological bundle $\gamma_n \colon \EU_G(n) \to \BU_G(n)$ which is easily seen to be equivariant.
\end{definition}

Following tom Dieck \cite{tD}, we may assemble the Thom spaces of the bundles $\Th(\gamma_n)$ into a genuine $G$-spectrum by setting the $V$th space equal to $\Th(\gamma_{|V|})$, and then spectrifying (see \cite{Sinha} for a lucid overview). This definition yields \textit{equivariant homotopical bordism}, which we denote by $\MU_G$.

Combining the work of tom Dieck \cite{tD} and Okonek \cite[\S1]{Okonek-CF}, we make the following definition.

\begin{definition}\label{def:complex-oriented} Let $G$ be a compact Lie group, and let $A$ be a multiplicative $\RO(G)$-graded cohomology theory. Define a \textit{complex orientation} on $A$ to be a choice, for every complex vector bundle $p \colon  E \to X$ of complex rank $k$, of \textit{Thom classes} $\tau(p) \in \til{A}^{2k}(\Th(E))$ subject to the following conditions:
\begin{enumerate}

    \item[(0)] (Thom isomorphisms) Cupping with the Thom class $\tau(p)$ induces a \textit{Thom isomorphism}:
    \begin{align*}
        A^\ast(-) \xto{\tau(p) \cup -} \til{A}^{\ast+2k}(- \smashprod \Th(E)).
    \end{align*}
    
    \item (Naturality) The pullback of a Thom class is the Thom class of the pullback bundle.

    \item (Multiplicativity) The Thom class of a product bundle is the product of the Thom classes of the respective bundles in the product.

    \item (Unitality) For any rank $n$ representation $V$, viewed as an equivariant bundle over a point, its Thom class $\tau(V)$ is the image of $1\in A^0(\ast)$ under the Thom isomorphism.
\end{enumerate}
\end{definition}

\begin{remark} The unitality condition dates back to tom Dieck and Okonek. Depending on preference, one might drop this condition and obtain a more general notion of complex orientations, in which we are allowed to rescale all our Thom classes by a unit in $\pi_0^G A$, for instance.
\end{remark}

\begin{note} In classical homotopy theory, the data of a complex orientation compresses to a single Thom class for the universal line bundle. This reduction relies on having a strong handle on the cell structure of the classifying space $\BU(n)$ for complex line bundles. In the equivariant setting, a filtration of $\BU_G(n)$ by equivariant Schubert cells is made complicated by the possible existence of irreducible $G$-representations of higher dimension. When $G$ is abelian, all irreducible representations are one-dimensional, and the cell structure on $\BU_G(n)$ is better understood, so such a compression is possible, and described in \cite{CGK-univ}. This is the primary reason that our understanding of the connection between equivariant complex orientations and equivariant formal group laws is mostly limited these days to the case of abelian groups (see for instance \cite{CGK-FGL,Hausmann}).
\end{note}

\begin{remark} For $V$ a complex rank $n$ representation, and $A$ a complex oriented cohomology theory, $\tau(V)$ is a map $S^V \to \Sigma^n A$. We observe that the following composite is the Thom isomorphism, which we will also denote by $\tau(V)$:
\begin{align*}
    A \smashprod S^V \xto{1\smashprod \tau(V)} A \smashprod \Sigma^n A \xto{\mu} \Sigma^n A,
\end{align*}
where $\mu$ denotes the multiplication on the ring spectrum. In other words, $\Sigma^V A \simeq \Sigma^{|V|} A$. This is the notion of $\GL$-orientation one encounters e.g. in \cite[4.13]{BW3}.
\end{remark}

\begin{example}\cite{Okonek-CF} For any compact Lie group $G$, homotopical bordism $\MU_G$ admits a complex orientation.
\end{example}

\begin{theorem} Given a compact Lie group $G$, a unital ring map $\MU_G \to A$ endows $A$ with a complex orientation. If $G$ is furthermore assumed to be abelian, then this is an equivalent definition of complex orientation \cite[Lemma~1.6]{Okonek-CF}, \cite[Theorem~1.2]{CGK-univ}.
\end{theorem}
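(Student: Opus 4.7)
The statement has two parts: first, a construction showing any unital ring map $\MU_G \to A$ endows $A$ with a complex orientation (for any compact Lie group $G$); second, the converse equivalence in the abelian case. The plan is to tackle these separately, since the forward construction is essentially formal while the abelian equivalence contains all the real difficulty.

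For the forward direction, given $f\colon \MU_G \to A$ a unital ring map, the plan is to transport the tautological Thom classes of $\MU_G$ along $f$. By the construction of $\MU_G$ recalled above, the identification of $\Th(\gamma_n)$ with a representative of the $2n$th structure space of $\MU_G$ produces a canonical universal Thom class $\tau_n^{\MU_G}\colon \Sigma^\infty \Th(\gamma_n) \to \Sigma^{2n}\MU_G$. Setting $\tau_n^A := \Sigma^{2n}f \circ \tau_n^{\MU_G}$ and, for any equivariant rank-$n$ complex bundle $p\colon E \to X$ classified by $\phi\colon X \to \BU_G(n)$, defining $\tau(p) := \phi^\ast \tau_n^A$, I would check the axioms of \autoref{def:complex-oriented} in order. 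Naturality (1) is built into the construction via pullback. Multiplicativity (2) reduces to the identity $\mu^\ast \gamma_{m+n} \cong \gamma_m \times \gamma_n$ for the block-sum classifying map $\mu\colon \BU_G(m)\times \BU_G(n) \to \BU_G(m+n)$, together with $f$ being multiplicative. Unitality (3) follows from $f$ sending unit to unit and the corresponding fact in $\MU_G$, which holds by construction. The Thom isomorphism axiom (0) reduces to the observation that $\tau_n^{\MU_G}$ is by construction a Thom class in $\MU_G$-cohomology, and since being a Thom class is characterized by fiberwise generation, any ring map preserves this property.

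For the converse direction under the abelian hypothesis, the plan is to produce a ring map $\MU_G \to A$ from an orientation. At the level of representation-indexed spaces the candidate is clear: the Thom class $\tau(\gamma_n)\colon \Th(\gamma_n) \to \Sigma^{2n}A$ provides the map on the $V$th space for $|V|=n$. Compatibility with the structure maps $\spherespec^{W-V}\smashprod \Th(\gamma_{|V|}) \to \Th(\gamma_{|W|})$ would follow from naturality (1) applied to the stabilization maps $\gamma_n \hookto \gamma_{n+1}$, and compatibility with the ring structure would follow from multiplicativity (2) applied to the block-sum maps.

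The main obstacle, and the reason the abelian hypothesis is essential, is promoting this coherent family of individual maps into a genuine map of orthogonal $G$-ring spectra. This rigidification requires control over the equivariant cellular structure of the classifying spaces $\BU_G(n)$. When $G$ is abelian every irreducible complex representation is one-dimensional, so $\BU_G(n)$ admits an equivariant Schubert filtration reducing the construction of the map to understanding Thom classes on products of copies of the universal line bundle over $\BU_G(1)$. In effect one shows that for abelian $G$ the data of a complex orientation in the sense of \autoref{def:complex-oriented} is equivalent to the data of a single Thom class on the universal line bundle, and this classifies ring maps out of $\MU_G$ --- precisely the content of the cited Comeza\~na--Greenlees--Kriz theorem, to which I would defer for the technical heart of the argument.
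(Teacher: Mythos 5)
The paper gives no proof of this statement---it is stated as a pure citation to Okonek \cite{Okonek-CF} and Comeza\~na--Greenlees--Kriz \cite{CGK-univ}. Your proposal fills in the argument that the citations implicitly supply, and it is correct. For the forward direction, the cleanest way to phrase your Thom-isomorphism step is via base change: a Thom class $\tau \in \tilde A^{2k}(\Th(E))$ realizes the Thom isomorphism iff the induced $A$-module map $A \smashprod \Th(E) \to \Sigma^{2k} A$ is an equivalence, and for $\tau_n^A = \Sigma^{2n}f \circ \tau_n^{\MU_G}$ this map is obtained from the $\MU_G$-module equivalence by applying $A \smashprod_{\MU_G}(-)$, which preserves equivalences. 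That reformulation makes ``any ring map preserves the Thom class property'' precise without appealing to a fiberwise-generation criterion. Note also that your forward argument tacitly uses that $\MU_G$ itself carries a complex orientation in the sense of \autoref{def:complex-oriented}; this is a separate input (the paper records it separately, citing Okonek) rather than something that falls out of the definition of $\MU_G$ as a tom Dieck Thom spectrum. Your diagnosis of why the abelian hypothesis is needed in the converse---the equivariant Schubert cell structure of $\BU_G(n)$ is only tractable when every irreducible is one-dimensional, allowing a reduction to a single Thom class on the universal line bundle---matches the paper's own remark on this point, and deferring the technical heart to \cite{CGK-univ} is exactly what the paper does.
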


\begin{example}\cite{Okonek-CF,Costenoble} For any compact Lie group $G$, complex equivariant $K$-theory $\KU_G$ receives a ring map $\MU_G \to \KU_G$ and is therefore complex oriented.
\end{example}

\begin{counterexample} Eilenberg--Maclane spectra of Mackey functors $H\underline{M}$ are generally \textit{not} complex oriented, in stark contrast to the non-equivariant setting. By pulling Thom classes back along the zero section, we obtain Euler classes in cohomology. If $V$ is a $G$-representation of dimension $n$, then pulling back the Thom class of its representation sphere along the zero section $S^0 \to S^V$ yields a class in $\pi_{-n}^G \MU_G$. This class is generally nonzero, indicating that $\MU_G$ is non-connective. All Eilenberg--MacLane spectra are integrally connective, hence in order to create a ring map $\MU_G \to H\underline{M}$, we would have to send Euler classes to zero, which destroys any possibility of the map preserving information about orientation.
\end{counterexample}

\begin{remark}\label{rmk:thom-iso}  If $A$ is a complex oriented cohomology theory, and $V$ and $W$ are complex $G$-representations of the same dimension $n$, by unitality, we obtain isomorphisms
\begin{align*}
    A^n(S^V) \cong A^0(\ast) \cong A^n(S^W).
\end{align*}
By unitality of the complex orientation, this is an isomorphism of free $\pi_0^G A$-modules of rank one, sending $\tau(V)\mapsto \tau(W)$. We may also refer to this a \textit{Thom isomorphism} by abuse of terminology.
\end{remark}

We record an important property enjoyed by complex oriented ring spectra in the equivariant setting. Informally, the following propositions state that any isomorphism of $G$-representations also represents the Thom isomorphism obtained by passing between the two representations in any complex oriented cohomology theory.

\begin{proposition}\label{prop:rep-isos-thom-classes}  Let $A$ be a complex oriented ring spectrum, and let $f \colon V \xto{\sim} W$ be any isomorphism of complex $G$-representations of dimension $n$. Then the isomorphism $f^\ast \colon A^n(S^W) \cong A^n(S^V)$ has as its inverse the Thom isomorphism of \autoref{rmk:thom-iso}.
\end{proposition}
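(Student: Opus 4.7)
The claim is that two $\pi_0^G A$-linear maps $\til{A}^n(S^W) \to \til{A}^n(S^V)$ agree: the pullback $f^\ast$ along the homeomorphism of representation spheres $S^f\colon S^V \to S^W$, and the inverse of the Thom isomorphism of \autoref{rmk:thom-iso}, which by construction sends $\tau(W) \mapsto \tau(V)$. Since both $\til{A}^n(S^V)$ and $\til{A}^n(S^W)$ are free $\pi_0^G A$-modules of rank one, generated by $\tau(V)$ and $\tau(W)$ respectively via axiom (0) together with unitality, it suffices to check the identity on a single generator. The inverse of the Thom isomorphism sends $\tau(W) \mapsto \tau(V)$, so the claim reduces to verifying
\[
    f^\ast \tau(W) = \tau(V) \quad\text{in } \til{A}^n(S^V).
\]

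The plan to establish this equality is to invoke the naturality axiom of a complex orientation (\autoref{def:complex-oriented}(1)) applied to the bundle isomorphism $f$. Viewing $V$ and $W$ as equivariant complex vector bundles over the point, $f\colon V \xto{\sim} W$ is a bundle isomorphism covering $\id_{\ast}$. Under this identification, $V$ is canonically the pullback of $W$ along $\id_{\ast}$ in the sense of the axiom, so naturality of the Thom class gives $f^\ast \tau(W) = \tau(V)$ directly.

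If one prefers to apply the naturality axiom only to genuine pullback squares with a nontrivial base map, the same identity can be recovered by constructing an interpolating bundle $E \to [0,1]$ (with trivial $G$-action on $[0,1]$) whose restrictions to the two endpoints are $V$ and $W$, with the clutching provided by $f$. Naturality applied to the two endpoint inclusions $\ast \hookto [0,1]$ identifies the endpoint restrictions of $\tau(E)$ with $\tau(V)$ and $\tau(W)$; since $[0,1]$ is $G$-contractible to a point, both endpoint pullbacks on cohomology are isomorphisms, and the endpoint identifications forced by $f$ yield $f^\ast \tau(W) = \tau(V)$.

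The only real subtlety is interpreting the naturality axiom so that it applies to bundle isomorphisms (whether directly, by reading the axiom liberally, or via an auxiliary interpolating bundle over an interval). Once this is handled, the proof is a one-step application of naturality together with the rank-one freeness of $\til{A}^n(S^V)$ as a $\pi_0^G A$-module; no further computation is required.
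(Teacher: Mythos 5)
Your proof is correct and follows essentially the same route as the paper's: reduce to tracking the generator $\tau(W)$ through both maps, then invoke naturality of the complex orientation to obtain $f^\ast\tau(W)=\tau(V)$. The auxiliary interpolating-bundle argument over $[0,1]$ is a reasonable hedge but unnecessary here, since the paper reads the naturality axiom as applying directly to bundle isomorphisms over a fixed base.
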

\begin{proof} Since this is an isomorphism of free $\pi_0^G A$-modules of rank one, we have to check where the generator is sent, and we observe that $f^\ast \tau(W) = \tau(V)$ by naturality of the complex orientation.
\end{proof}

\begin{remark} At no point in \autoref{prop:rep-isos-thom-classes} did we use any specific properties of the choice of isomorphism $f$. This is unsuprising, due to the fact that all isomorphisms of complex representations $V_1 \xto{\sim} V_2$ are homotopic \cite[1.1]{tD}, thus there is a single homotopy class $[S^{V_1}, S^{V_2}]$ corresponding to isomorphisms of representations. The argument above indicates roughly that after smashing with $A$, this homotopy class aligns with that produced by the Thom isomorphism.
\end{remark}

We can now revisit our discussion of local indices from \autoref{rmk:need-orientation-data}.

\subsection{Local indices and conservation of number}

\begin{lemma}\label{lem:local-index} Let $A$ be any complex oriented ring spectrum in $\Sp_G$, let $E \to M$ be an equivariant complex vector bundle of rank $n$ over a compact smooth $G$-manifold of dimension $n$, and let $\sigma \colon  M \to E$ be a section with an isolated simple zero at $x\in M$. Then the local index, as defined in \autoref{def:local-index}, is
\begin{align*}
    \ind_{G\cdot x} \sigma = \Tr_{G_x}^G(1).
\end{align*}
\end{lemma}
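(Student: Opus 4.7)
The plan is to reduce the computation of the local index to two standard facts: (i) that a simple zero produces an isomorphism of complex $H$-representations, which under a complex orientation becomes the identity, and (ii) that the pushforward along $\pi_{G/H}\colon G/H \to \ast$ realizes the equivariant transfer $\Tr_H^G$. Throughout, set $H = G_x$, so that the equivariant clopen component of $Z(\sigma)$ containing $x$ is the orbit $G/H \subseteq M$.

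First I would invoke \autoref{prop:local-euler-class-at-orbit} to rewrite $e(M,\sigma,G/H)$ as the composite
\[
    (G/H) \times \Th(T_xM) \xto{G/H \times d_x \sigma} (G/H) \times \Th(E_x) \xto{\id \smashprod u} (G/H) \times (\Th(E_x) \smashprod A),
\]
where $u\colon \spherespec \to A$ is the unit. Because $x$ is a simple zero, $d_x\sigma\colon T_xM \to E_x$ is an isomorphism of complex $H$-representations of the same complex dimension $n$.

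Next, I would trace through the pushforward defining $\ind_{G\cdot x}(\sigma)$. By \autoref{def:local-index} this is the image of $e(M,\sigma,G/H)$ under
\[
    A^E_{G/H}(M) \cong A^{T(G/H)}(G/H) \xto{(\pi_{G/H})_\ast} A^0(\ast) = \pi_0^G A,
\]
where the first isomorphism is the relative orientation of \autoref{prop:twisted-tangent-bundle} (applied to a vanishing tangent bundle, since $G/H$ is $0$-dimensional). Unwinding the relative orientation here amounts to applying the Thom isomorphism $\tau(E_x)^{-1}\tau(T_xM)$ supplied by the complex orientation of $A$. By \autoref{prop:rep-isos-thom-classes}, the map on $A^{n}(-)$ induced by the $H$-equivariant isomorphism $d_x\sigma$ is the inverse of this Thom isomorphism. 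Therefore, after the identification $A^E_{G/H}(M)\cong A^{T(G/H)}(G/H)$, the refined Euler class $e(M,\sigma,G/H)$ becomes exactly the element $1 \in A^0(G/H)$ corresponding under Frobenius reciprocity / the equivalence $\Ret{G}{G/H}\simeq \Ret{H}{}$ to the unit $1\in \pi_0^H A$.

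Finally, I would identify the pushforward $(\pi_{G/H})_\ast\colon A^0(G/H)\to \pi_0^G A$ with the classical equivariant transfer $\Tr_H^G\colon \pi_0^H A \to \pi_0^G A$. This is the Pontryagin--Thom transfer associated to the finite cover $\pi_{G/H}\colon G/H \to \ast$ of \autoref{ex:PT-to-point} (whose cotangent complex vanishes, since $G/H$ is discrete), and at the level of $\pi_0$-Mackey functors it is by definition the transfer in the Mackey functor $\underline{\pi}_0 A$. Combining these steps gives $\ind_{G\cdot x}(\sigma) = \Tr_H^G(1)$, as claimed. The main point requiring care is the verification in the middle step that, under \autoref{prop:rep-isos-thom-classes}, the specific isomorphism $d_x\sigma$ (rather than some arbitrary representation isomorphism) produces the Thom class; however, since any two isomorphisms of complex $H$-representations are canonically homotopic \cite[1.1]{tD}, this poses no real obstacle.
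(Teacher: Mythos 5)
Your proof is correct and takes essentially the same route as the paper: the key input is \autoref{prop:rep-isos-thom-classes}, applied to the $G_x$-equivariant isomorphism $d_x\sigma\colon T_xM \to E_x$ arising from simplicity of the zero, which shows the local contribution is $1 \in \pi_0^{G_x}A$, after which the pushforward along $\pi_{G/G_x}$ is identified with the Mackey-functor transfer $\Tr_{G_x}^G$ (the paper delegates this last identification to the discussion in \autoref{rmk:need-orientation-data} rather than spelling it out in the proof body). You have in fact been a bit more careful than the paper, which loosely refers to $d_x\sigma$ as a map of $G$-representations and to the resulting unit as an element of $\pi_0^G A$, when it should be $G_x$-representations and $\pi_0^{G_x}A$ respectively; your version correctly works at the level of the isotropy group throughout.
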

\begin{proof} We must argue that the composite
\begin{align*}
    S^{T_x M} \smashprod A \xto{d_x \sigma \smashprod A} S^{E_x} \smashprod A \xto{\tau} S^{T_x M} \smashprod A
\end{align*}
is equal to $1 \in \pi_0^G A$, where $\tau$ is arising from the Thom classes provided by the equivariant complex orientation on $A$. As $x$ is an isolated simple zero, the intrinsic derivative is an injective map of $G$-representations of the same finite dimension, and hence is an isomorphism $d_x \sigma \colon T_x M \to E_x$. Thus we find ourselves under the conditions of \autoref{prop:rep-isos-thom-classes}, from which the result follows.
\end{proof}

To wrap up this section, we explore a payoff of the formalism developed above, which will serve as our primary computational tool. Namely, we can develop a theory of conservation of number taking value in $\pi_0^G A$ for any complex oriented equivariant cohomology theory $A$.

By \autoref{lem:local-index}, the local index at an isolated simple orbit $G\cdot x$ is the trace $\Tr_{G_x}^G(1)$ from the isotropy group of $x$ to the entire group $G$, where this transfer is taking place at the level of the zeroth homotopy Mackey functor. The following key lemma should be thought of as an equivariant analogue of the Poincar\'e--Hopf theorem, with cohomology classes valued in complex oriented $G$-ring spectra.

\begin{lemma}\label{lem:cons-number-cx-oriented-ring-spectra} \textit{(Equivariant conservation of number)} Let $E \to M$ be an equivariant complex rank $n$ bundle over a smooth $G$-manifold of dimension $n$, and let $\sigma \colon  M \to E$ be any section whose zeros are isolated and simple. Let $A\in\Sp_G$ be any complex oriented ring spectrum. Then there is an equality in $\pi_0^G A$:
\begin{align*}
    n(E,\sigma) = \sum_{G\cdot x \subseteq Z(\sigma)} \Tr_{G_x}^G(1),
\end{align*}
where the Euler number $n(E,\sigma)$ is independent of the choice of $\sigma$.
\end{lemma}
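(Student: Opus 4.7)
The plan is to combine \autoref{lem:conservation} with \autoref{lem:local-index}, once a relative $A$-orientation on $E$ has been constructed from the complex orientation. First I would check that any complex orientation on $A$ furnishes such an orientation on $E$. Since $E$ and $TM$ are both complex vector bundles of complex rank $n$ over $M$, the Thom isomorphism recorded in \autoref{rmk:thom-iso}, applied fiberwise via the naturality clause of \autoref{def:complex-oriented}, produces a zig-zag of equivalences
\begin{align*}
\Sigma_M^E \pi_M^\ast A \simeq \Sigma_M^{2n}\pi_M^\ast A \simeq \Sigma_M^{TM}\pi_M^\ast A,
\end{align*}
which is precisely the data of a relative $A$-orientation as in \autoref{def:rel-o}.

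With this orientation fixed, I would invoke \autoref{lem:conservation} to obtain both the decomposition $n(E,\sigma) = \sum_n \ind_{Z_n}(\sigma)$ and the independence of $n(E,\sigma)$ from the choice of section. The hypothesis that every zero of $\sigma$ is isolated and simple, together with the fact that $G$ preserves these properties, forces each equivariant clopen component of $Z(\sigma)$ to consist of a single $G$-orbit $G\cdot x$. Hence the sum over clopen components is really a sum over $G$-orbits in $Z(\sigma)$.

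The final step is to apply \autoref{lem:local-index} to identify each local index $\ind_{G\cdot x}\sigma = \Tr_{G_x}^G(1)$, yielding the claimed equation in $\pi_0^G A$. The main conceptual content has already been extracted into the two component lemmas, so the only real subtlety is verifying that the relative orientation produced by the complex orientation restricts over each orbit to the normalization used in the proof of \autoref{lem:local-index}; this is guaranteed by the naturality and unitality axioms of \autoref{def:complex-oriented} together with \autoref{prop:rep-isos-thom-classes}, which collectively ensure that the orientation-theoretic identification of $S^{E_x}$ with $S^{T_xM}$ is independent of the choice of isomorphism of representations. Everything else is routine bookkeeping across the commutative diagram of \autoref{prop:compute-locally-in-rel-o}.
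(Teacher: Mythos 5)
Your proposal is correct and is essentially the argument the paper intends: Lemma~\ref{lem:cons-number-cx-oriented-ring-spectra} is stated without its own proof precisely because it is a direct amalgam of \autoref{lem:conservation} and \autoref{lem:local-index}, mediated by the observation that a complex orientation produces a relative $A$-orientation on $E$ via a zig-zag of Thom isomorphisms through $\Sigma_M^{2n}\pi_M^\ast A$. You have also correctly observed the (easy but necessary) point that isolated simple zeros force each equivariant clopen component of $Z(\sigma)$ to be a single finite $G$-orbit, so the sum in \autoref{lem:conservation} becomes a sum over orbits; and the citation of \autoref{prop:rep-isos-thom-classes} to reconcile the globally-defined relative orientation with the pointwise Thom-class identification used in the proof of \autoref{lem:local-index} is exactly the subtlety the paper flags in \autoref{rmk:need-orientation-data}. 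The only cosmetic imprecision is that \autoref{rmk:thom-iso} speaks of representation spheres rather than bundles over $M$; the equivalence $\Sigma_M^E \pi_M^\ast A \simeq \Sigma_M^{2n}\pi_M^\ast A$ should really be attributed to item (0) of \autoref{def:complex-oriented}, with \autoref{rmk:thom-iso} then used fiberwise over each orbit. This is a citation nuance, not a gap.
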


\begin{example}\label{ex:KUG-valued-Euler-class} In complex $K$-theory, we have that $\KU_{G_x}(\ast) = R_\mathbb{C}[G_x]$, and the transfer of the trivial representation $1$ is the regular representation of the finite $G$-set $G/G_x$. Thus an Euler number computed as in \autoref{lem:cons-number-cx-oriented-ring-spectra} is given by the permutation representation $\C[Z(\sigma)]$ of the zero locus of a section with isolated simple zeros, and the conservation statement is that $\C[Z(\sigma)] \cong \C[Z(\sigma')]$ is an isomorphism of $G$-representations for any two sections with simple isolated zeros.
\end{example}

Ultimately we want to argue that an answer valued in the \textit{Burnside ring} $A(G)$ is independent of a choice of section. The $\KU_G$-valued Euler class as in \autoref{ex:KUG-valued-Euler-class} is insufficient for this purpose, due to the fact that the map $\pi_0^G \mathbb{S}_G \to \pi_0^G \KU_G$ from the Burnside ring to the representation ring will often fail to be injective. We instead need a complex oriented cohomology theory for which the unit map is an injection on $\pi_0^G$.

We thank William Balderrama for communicating the following argument to us.

\begin{proposition}\label{prop:MUG-detects-nilpotents} Homotopical bordism $\MU_G$ detects nilpotence, in the sense that for any ring spectrum $A$ equipped with a ring map $A \to \MU_G$, the kernel of $\pi_\star^G A \to \pi_\star^G \MU_G$ consists of nilpotent elements (see \cite[3.20]{BGH}).
\end{proposition}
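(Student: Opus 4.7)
The plan is to reduce the statement to the equivariant analogue of the Devinatz--Hopkins--Smith nilpotence theorem, namely \cite[3.20]{BGH}. That result asserts that $\MU_G$ detects nilpotence in the genuine $G$-equivariant setting: for a $G$-ring spectrum $R$ and $\alpha \in \pi_\star^G R$, nilpotence of $\alpha$ is equivalent to nilpotence of its $\MU_G$-Hurewicz image in $\pi_\star^G(R \smashprod \MU_G)$.

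To apply this to our setting, I would first re-express the ring map $f\colon A \to \MU_G$ as the composite
\begin{equation*}
A \xto{\mathrm{id}_A \smashprod \eta_{\MU_G}} A \smashprod \MU_G \xto{\mu \circ (f \smashprod \mathrm{id}_{\MU_G})} \MU_G,
\end{equation*}
where the first arrow is the $\MU_G$-Hurewicz map and the second is a ring retraction admitting $\eta_A \smashprod \mathrm{id}_{\MU_G}$ as a ring section. For any $x \in \ker(f_\ast)$, the Hurewicz image $x \smashprod 1 \in \pi_\star^G(A \smashprod \MU_G)$ then lies in the kernel of this retraction. The cited theorem is designed to translate such vanishing into nilpotence of $x \smashprod 1$ in $\pi_\star^G(A \smashprod \MU_G)$, and via the equivariant nilpotence theorem this lifts to nilpotence of $x$ in $\pi_\star^G A$.

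The main obstacle is entirely bundled into \cite[3.20]{BGH}, whose proof combines the equivariant formal group law machinery, the Hopf algebroid structure on $\pi_\star^G(\MU_G \smashprod \MU_G)$, and $\MU_G$-based Adams-type tower arguments adapting the classical Devinatz--Hopkins--Smith proof to the $G$-equivariant world. The short reduction above only sets up the input; the substantive content is imported from the cited reference as a black box. Within this paper the proposition is invoked in the case $A = \spherespec_G$, where it combines with the absence of nilpotents in the Burnside ring $\pi_0^G \spherespec_G = A(G)$ (for $G$ finite) to ensure that the unit $A(G) \to \pi_0^G \MU_G$ is injective, validating the use of $\MU_G$-valued Euler numbers as genuine invariants in the Burnside ring.
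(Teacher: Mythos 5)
Your proposal takes a genuinely different route from the paper's. You treat \cite[3.20]{BGH} as a black-box equivariant nilpotence theorem for $\MU_G$ and then attempt to reduce the ring-map formulation of the proposition to a Hurewicz-image formulation. The paper's proof is instead a short, self-contained reduction to the \emph{classical} Devinatz--Hopkins--Smith theorem: since geometric fixed points commute with mapping telescopes, nilpotence of a class in $\pi_\star^G A$ can be tested after applying $\Phi^H$ for each $H\le G$ (citing \cite[3.17]{BGH} for that step); by \cite[4.10]{Sinha}, $\Phi^H\MU_G$ splits as a wedge of shifted copies of nonequivariant $\MU$; so applying the classical $\MU$-detection theorem \cite{DHS} levelwise at geometric fixed points gives the result. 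In particular, the proof the paper gives does not pass through any equivariant Adams-tower or $\MU_G$-Hopf-algebroid machinery of the sort you speculate about -- the geometric-fixed-points reduction sidesteps all of that.

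Beyond the difference of approach, your reduction step also has a gap. You argue that for $x\in\ker(f_\ast)$ the Hurewicz image $x\smashprod 1$ lies in the kernel of the retraction $\pi_\star^G(A\smashprod\MU_G)\to\pi_\star^G\MU_G$, and then assert that the Hurewicz-image form of the nilpotence theorem ``translates such vanishing into nilpotence of $x\smashprod 1$.'' But being in the kernel of that retraction is weaker than $x\smashprod 1$ being zero (or nilpotent) in $\pi_\star^G(A\smashprod\MU_G)$, which is what the Hurewicz form needs as input. To close this loop one would have to either re-invoke the ring-map form (circular) or supply an independent argument that the retraction kernel consists of nilpotents -- which is essentially the proposition again, now for the ring spectrum $A\smashprod\MU_G$. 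The paper's geometric-fixed-points route avoids this subtlety entirely by working fiberwise over subgroups and feeding everything into the classical theorem.
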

\begin{proof} Taking geometric fixed points commutes with the construction of a mapping telescope, which allows us to conclude that nilpotence can be detected at the level of geometric fixed points \cite[3.17]{BGH}. By \cite[4.10]{Sinha}, $\Phi^H \MU_G$ decomposes as a wedge sum of classical $\MU$ spectra. Finally, we can conclude by applying the classical nilpotence theorem \cite{DHS}.
\end{proof}

We leverage this to prove our main result.

\begin{theorem}\label{thm:cons-number} \textit{(Equivariant conservation of number)}  Let $E \to M$ be an equivariant complex rank $n$ bundle over a smooth $G$-manifold of dimension $n$, and let $\sigma,\sigma' \colon  M \to E$ be any two sections whose zeros are isolated and simple. Then $Z(\sigma)$ and $Z(\sigma')$ are isomorphic as finite $G$-sets. In other words, the $G$-orbits of the zeros are independent of the choice of section.
\end{theorem}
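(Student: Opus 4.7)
The plan is to apply the preceding Lemma \ref{lem:cons-number-cx-oriented-ring-spectra} with the universal complex-oriented theory $A = \MU_G$, and then descend the resulting equality from $\pi_0^G \MU_G$ down to the Burnside ring $\pi_0^G \mathbb{S} = A(G)$ using the nilpotence detection in Proposition \ref{prop:MUG-detects-nilpotents}, together with the classical fact that $A(G)$ is reduced.

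More concretely, first I would observe that the formal sums
\begin{align*}
    [Z(\sigma)] := \sum_{G\cdot x \subseteq Z(\sigma)} \Tr_{G_x}^G(1), \qquad [Z(\sigma')] := \sum_{G\cdot x \subseteq Z(\sigma')} \Tr_{G_x}^G(1),
\end{align*}
naturally live in $\pi_0^G \mathbb{S} \cong A(G)$, where under this identification $\Tr_{H}^G(1)$ corresponds to the class of the orbit $[G/H]$, so $[Z(\sigma)]$ and $[Z(\sigma')]$ are precisely the Burnside ring classes of the $G$-sets $Z(\sigma)$ and $Z(\sigma')$. Applying Lemma \ref{lem:cons-number-cx-oriented-ring-spectra} in $\MU_G$ and using that the Euler number is independent of the choice of section, the images of $[Z(\sigma)]$ and $[Z(\sigma')]$ under the unit map $u \colon \pi_0^G \mathbb{S} \to \pi_0^G \MU_G$ agree.

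Next, I would invoke Proposition \ref{prop:MUG-detects-nilpotents} with $A = \mathbb{S}$ and the unit ring map $\mathbb{S} \to \MU_G$ to conclude that the difference $[Z(\sigma)] - [Z(\sigma')] \in A(G)$, lying in the kernel of $u$, is nilpotent. Here the key classical input is that the Burnside ring of a finite group is reduced: the mark homomorphism (fixed-point) $A(G) \hookrightarrow \prod_{[H] \le G} \mathbb{Z}$ sending $[G/K] \mapsto (|(G/K)^H|)_{[H]}$ is an injective ring map into a product of copies of $\mathbb{Z}$, which has no nilpotents. Therefore $[Z(\sigma)] = [Z(\sigma')]$ in $A(G)$.

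Finally, since the map from isomorphism classes of finite $G$-sets into $A(G)$ is injective (isomorphism classes form a basis of $A(G)$ as a $\mathbb{Z}$-module, again visible from the mark homomorphism), we conclude $Z(\sigma) \cong Z(\sigma')$ as $G$-sets. The only real obstacle is packaging this chain of reductions cleanly; each individual step is either cited from the machinery already in the paper or is classical Burnside ring theory.
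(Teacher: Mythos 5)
Your proposal is correct and follows essentially the same route as the paper: compute the Euler number in $\MU_G$ via \autoref{lem:cons-number-cx-oriented-ring-spectra}, use \autoref{prop:MUG-detects-nilpotents} together with the reducedness of the Burnside ring to conclude $\pi_0^G \mathbb{S} \to \pi_0^G \MU_G$ is injective, and then read off the $G$-set from its Burnside ring class. The paper phrases the compatibility of transfers with the unit map via ``map of Tambara functors'' where you leave it implicit, and you make the reducedness of $A(G)$ and the final extraction of the $G$-set slightly more explicit via the mark homomorphism; these are cosmetic differences in presentation, not in substance.
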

\begin{proof} We know for such a section $\sigma$, we can obtain an Euler class valued in $\pi_0^G \MU_G$ by \autoref{lem:cons-number-cx-oriented-ring-spectra}:
\begin{align*}
    n(E,\sigma) = \sum_{G\cdot x \subseteq Z(\sigma)} \Tr_{G_x}^G(1).
\end{align*}
As $\MU_G$ detects nilpotence by \autoref{prop:MUG-detects-nilpotents}, and the Burnside ring is reduced, we can conclude that $\pi_0^G \mathbb{S}_G \to \pi_0^G \MU_G$ is injective. Remarking that the map $\underline{\pi}_0 \mathbb{S}_G \to \underline{\pi}_0 \MU_G$ is a map of Tambara functors, we can observe that $n(E,\sigma)$ admits a \textit{unique preimage} in $A(G)$, given by $\Tr_{G_x}^G(1)$, where this transfer is of the form $\Tr_{G_x}^G \colon A(G_x) \to A(G)$. This is precisely the $G$-set $Z(\sigma)$.\end{proof}

In the following section we leverage this perspective to compute the equivariant count of 27 lines on a symmetric smooth cubic surface.

\section{The 27 lines on a smooth symmetric cubic surface}\label{sec:27}

In this section we apply our methods to compute the orbits of lines on a smooth symmetric cubic surface. In particular in the presence of symmetry we can state further constraints about the number of lines defined on a real cubic surface.

\subsection{27 lines on a complex symmetric cubic surface}

\begin{definition} We say that a cubic surface $X = V(F)\subset \P^3$ is $S_4$-\textit{symmetric} (or just \textit{symmetric}) if $F(x_0,x_1,x_2,x_3)$ is a symmetric polynomial.
\end{definition}

In particular by letting $S_4$ act on $\CP^3$ by permuting projective coordinates, we have that symmetric cubics are precisely those preserved under this action. The lines on such a cubic surface therefore come equipped with $S_4$-orbits, and we can inquire about the orbit type. By equivariant conservation of number, the answer is independent of the choice of symmetric cubic surface.

\begin{theorem}\label{thm:27-lines} Given any smooth symmetric complex cubic surface, its 27 lines have orbit type
\begin{align*}
    \left[ S_4/C_2^o \right] + \left[ S_4/C_2^e \right]+ \left[ S_4/D_8 \right],
\end{align*}
where $C_2^o$ is a single transposition, and $C_2^e$ is a product of two disjoint transpositions.
\end{theorem}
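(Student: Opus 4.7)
The plan is to identify the 27 lines on a symmetric cubic as the zero locus of an $S_4$-equivariant section of a complex vector bundle, then apply \autoref{thm:cons-number} to reduce the question to a computation on a single convenient example.

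First I would set up the moduli. Let $M = \Gr(2,\C^4)$ be the Grassmannian of lines in $\CP^3$, equipped with the natural $S_4$-action coming from permuting coordinates of $\C^4$. Let $S \to M$ denote the tautological rank-2 subbundle, and consider the $S_4$-equivariant complex vector bundle $E = \Sym^3(S^*) \to M$, which has complex rank $4 = \dim_\C M$. A cubic form $F \in \Sym^3(\C^4)^*$ restricts to each line to give a cubic form on that line, producing a global section $\sigma_F$ of $E$ whose zero locus is exactly the set of lines on $V(F)$. When $F$ is symmetric, $\sigma_F$ is $S_4$-equivariant, and when $V(F)$ is smooth the 27 zeros are classical simple isolated zeros (the standard tangent-space count at each line). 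Applying \autoref{thm:cons-number} immediately yields that the $S_4$-set $Z(\sigma_F)$ is independent of the choice of smooth symmetric $F$, so it remains to compute orbits on any single convenient example.

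The example I would use is the Fermat cubic $F = x_0^3 + x_1^3 + x_2^3 + x_3^3$, which is smooth and manifestly symmetric. Its 27 lines have the classical form
\[
L_{P,a,b} : \quad x_i + \zeta^a x_j = 0 = x_k + \zeta^b x_l,
\]
where $\zeta = e^{2\pi i/3}$, $(a,b)\in (\Z/3)^2$, and $P = \{\{i,j\},\{k,l\}\}$ ranges over the three partitions of $\{0,1,2,3\}$ into two pairs. The $S_4$-stabilizer of a partition $P$ is a copy of $D_8$, which acts on the nine lines over $P$ via $(ij)\cdot(a,b) = (-a,b)$, $(kl)\cdot(a,b) = (a,-b)$, and $(ik)(jl)\cdot(a,b) = (b,a)$ (the swap of the two pairs). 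Direct enumeration shows this $D_8$-action has three orbits on $(\Z/3)^2$: the fixed point $\{(0,0)\}$ with stabilizer all of $D_8$, the four-element orbit $\{(\pm 1,0),(0,\pm 1)\}$ with stabilizer the single transposition $C_2^o = \langle (ij)\rangle$, and the four-element orbit $\{(\pm 1,\pm 1)\}$ with stabilizer the double transposition $C_2^e = \langle (ik)(jl)\rangle$. Promoting to full $S_4$-orbits over all three partitions aggregates these into $S_4$-orbits of sizes $3$, $12$, $12$, giving the decomposition $[S_4/D_8] + [S_4/C_2^o] + [S_4/C_2^e]$.

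The heavy lifting is carried by \autoref{thm:cons-number}, and the principal remaining task is the combinatorial orbit computation on the Fermat cubic. The subtle point is verifying which conjugacy class of order-two subgroup stabilizes each non-trivial orbit: one must distinguish the transposition within a pair (fixing a line whose coefficient on that pair is $1$) from the double transposition swapping the two pairs (fixing a line whose two coefficients match). Once this combinatorial matching is settled, equivariant conservation of number propagates the answer to every smooth symmetric complex cubic surface.
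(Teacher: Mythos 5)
Your proof follows the same strategy as the paper: set up the section $\sigma_F$ of $\Sym^3 \mathcal{S}^\ast \to \Gr(2,\C^4)$, invoke \autoref{thm:cons-number} to reduce to a single example, and compute the orbits on the Fermat cubic. The only difference is presentational --- where the paper writes out all 27 lines and sorts them by hand, you parametrize lines by a partition $P$ of $\{0,1,2,3\}$ and a pair $(a,b)\in(\Z/3)^2$, observe that the stabilizer of a line lies in the $D_8$ stabilizing its partition, and read off the three $D_8$-orbits on $(\Z/3)^2$; this is a tidier route to the same answer (your isotropy types are right, though for the orbit $\{(\pm1,0),(0,\pm1)\}$ the stabilizer of $(1,0)$ is $\langle(kl)\rangle$ rather than $\langle(ij)\rangle$ --- a conjugate transposition, so the conclusion is unaffected).
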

\begin{proof} We remark that a symmetric complex cubic surface $X$ induces a section of the following $S_4$-equivariant complex vector bundle:
\[ \begin{tikzcd}
    \Sym^3 \mathcal{S}^\ast \rar & \Gr_\mathbb{C}(1,\CP^3),\lar[bend right=10,"\sigma_X" above]
\end{tikzcd} \]
where $\mathcal{S}$ denotes the tautological bundle on the Grassmannian. In particular $\sigma_X(\ell) = 0$ if and only if $\ell \subseteq X$ is a line on the symmetric cubic. Since the 27 lines on $X$ are necessarily distinct (c.f.~\cite[Theorem~5.1]{3264}), the zero locus $Z(\sigma_X)$ consists of 27 points on $\Gr_\mathbb{C}(1,\CP^3)$, each of which is a simple zero of $\sigma_X$.

By \autoref{thm:cons-number}, the $S_4$-orbits will be independent of the choice of symmetric cubic, so it suffices to pick our favorite symmetric cubic and compute the $S_4$-orbits of its lines. Consider the example of the \textit{Fermat cubic}:
\begin{align*}
    F = \left\{ [x_0:x_1:x_2:x_3] \colon x_0^3 + x_1^3 + x_2^3 + x_3^3 = 0 \right\}.
\end{align*}
Fix $\zeta$ to be a primitive sixth root of unity in $\C$, hence we have three distinct cube roots of $-1$, namely $\zeta$, $\zeta^{-1}$, and $-1$. The 27 lines on the Fermat are given by the following equations, where $[w:z]$ varies over $\CP^1$:
\begin{center}
    \begin{tabular}{l l l l}
    \blue{$[w: - w : z : \zeta z]$} & \blue{$[w: - w : z : \zeta^{-1} z]$} & \blue{$[w:\zeta w : z : - z]$} &  \blue{$[w:\zeta^{-1} w : z : - z]$}  \\
    \blue{$[w: z: \zeta w : - z]$} & \blue{$[w: z: \zeta^{-1} w : - z]$} & \blue{$[w: z: - w : \zeta z]$} & \blue{$[w: z: - w : \zeta^{-1} z]$} \\
    \blue{$[w: z: -z : \zeta w]$} &  \blue{$[w: z: -z : \zeta^{-1} w]$} & \blue{$[w: z: \zeta z : - w]$} & \blue{$[w: z : \zeta^{-1} z : - w]$} \\
    \green{$[w:\zeta w : z : \zeta z]$} & \green{$[w:\zeta w : z : \zeta^{-1} z]$} & \green{$[w:\zeta^{-1} w : z : \zeta z]$} & \green{$[w:\zeta^{-1} w : z : \zeta^{-1} z]$} \\
    \green{$[w: z: \zeta w : \zeta z]$} & \green{$[w: z: \zeta w : \zeta^{-1} z]$} & \green{$[w: z: \zeta^{-1} w : \zeta z]$} & \green{$[w: z: \zeta^{-1} w : \zeta^{-1} z]$} \\
    \green{$[w: z: \zeta z : \zeta w]$} & \green{$[w: z : \zeta^{-1} z : \zeta w]$}&  \green{$[w: z: \zeta z : \zeta^{-1} w]$} & \green{$[w: z : \zeta^{-1} z : \zeta^{-1} w]$}  
    \end{tabular}
    \begin{tabular}{l l l}
    \red{$[w: - w : z : - z]$} & \red{$[w: z: - w : - z]$} & \red{$[w: z: -z : - w]$}
    \end{tabular}
\end{center}
Thus the orbits are as follows (colors are chosen so that the orbits match the orbits of the lines on \autoref{fig:clebsch}), where the notation $C_2^o$ (odd) denotes a single transposition and $C_2^e$ (even) denotes a product of two disjoint transpositions.
\begin{center}
    \begin{tabular}{l | l | l | l | l}
    Color & Generating line & Isotropy subgroup & Orbit type & \# of lines\\
    \hline
    \blue{Blue} & \blue{$[w: -w : \zeta z : -z]$} & $\left\langle (1\ 2) \right\rangle$ & $S_4 / C_2^o$  & 12\\
    \green{Green} & \green{$[w: \zeta w: z : \zeta z]$} & $\left\langle (1\ 3)(2\ 4) \right\rangle$ & $S_4/C_2^e$ & 12 \\    
    \red{Red} & \red{$[w:-w:z:-z]$} & $\left\langle (1\ 3)(2\ 4),(1\ 2),(3\ 4) \right\rangle$ & $S_4 / D_8$ & 3
    \end{tabular}
\end{center}

\end{proof}

Given a subgroup $G \le S_4$, it induces a natural action on $\CP^3$, and we can ask about the 27 lines on a $G$-symmetric smooth complex cubic surface. For this result, the following is key.

\begin{proposition}\label{prop:restriction-Euler-number} Let $E \to M$ be a $G$-equivariant rank $n$ complex vector bundle over a smooth compact $G$-manifold of dimension $n$. Let $A$ be a complex oriented $G$-cohomology theory, let $n_G(E)$ denote the Euler number of $E$ in the $A$-cohomology theory, and for a subgroup $H \le G$, let $n_H(E)$ denote the Euler number in the restricted $H$-equivariant $A$-cohomology theory. Then we have that
\begin{align*}
    n_H(E) = \Res_H^G n_G(E).
\end{align*}
\end{proposition}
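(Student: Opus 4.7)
The plan is to observe that every ingredient entering the definition of the Euler number---the Pontryagin--Thom transfer, the Thom transformation by the cotangent complex, the unit of the ring spectrum, the relative orientation, and the pushforward $(\pi_M)_\ast$---is natural with respect to the restriction functor $\Res_H^G$. Once this is set up, the identity $n_H(E) = \Res_H^G n_G(E)$ follows by chasing a single diagram.

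First I would recall that restriction from $G$ to $H$ extends to a symmetric monoidal functor $\Res_H^G \colon \GOS{M} \to \mathrm{Sp}_H^M$ on parametrized spectra (viewing $M$ as an $H$-manifold by restriction). Because $\Res_H^G$ preserves fiberwise smash products, representation spheres, and colimits, it commutes with the Thom transformations $\Sigma_M^E$ and $\Sigma_M^{TM}$, with the pullback $\pi_M^\ast$, and with the exceptional functors $f_!, f^!$ associated to closed immersions and smooth proper families (in particular with $(\pi_M)_!$ and $i_!$). Similarly, the Pontryagin--Thom transfer $\PT(\pi_M) \colon \spherespec \to (\pi_M)_! \spherespec_M$ and the transfer $\PT(i)$ of \autoref{prop:PT-closed-inclusion} are constructed naturally and so commute with $\Res_H^G$. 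The relative orientation $\rho$ on $E$ restricts to a relative $H$-equivariant orientation on $E$, and the complex orientation on $A$ restricts to one on $\Res_H^G A$ (this is immediate from the list of axioms in \autoref{def:complex-oriented}).

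Next, assembling these observations, I would show that the refined Euler class $e(E,\sigma,Z) \in A_Z^E(M)$ of \autoref{def:refined-euler-class} is sent under $\Res_H^G$ to the refined Euler class of $E$ viewed as an $H$-bundle with section $\Res_H^G \sigma$ valued in $\Res_H^G A$. Applying $\Res_H^G$ to the commutative diagram of \autoref{prop:compute-locally-in-rel-o} then yields the analogous commutative diagram for $H$. Following the class around this diagram and pushing it forward to $\pi_0^H A = \Res_H^G \pi_0^G A$ gives the equality
\begin{align*}
    n_H(E,\Res_H^G \sigma) = \Res_H^G\, n_G(E,\sigma).
\end{align*}
Since by \autoref{lem:conservation} both sides are independent of the section, this is the desired identity $n_H(E) = \Res_H^G n_G(E)$.

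The only step requiring any real care is the compatibility of the Pontryagin--Thom transfers with $\Res_H^G$; this is where the point-set issues flagged in the opening remark of \autoref{sec:GOS} could in principle cause trouble. However, since the transfer is characterized by Costenoble--Waner duality of the natural map $i_\sharp \mathbb{S}_Z \to \mathbb{S}_M$ (and analogously for smooth proper families), and restriction preserves dual pairs of parametrized spectra, the compatibility is formal. No nilpotence or detection argument is needed here---this is genuinely just a naturality statement.
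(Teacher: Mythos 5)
Your approach is sound but takes a genuinely different — and considerably more laborious — route than the paper. You propose to verify that $\Res_H^G$ commutes with every piece of the six-functors machinery (Thom transformations, the exceptional adjunction, the Pontryagin--Thom transfers of \autoref{prop:PT-closed-inclusion}, the relative orientation, the pushforward), assemble these into the statement that $\Res_H^G$ preserves the refined Euler class and the diagram of \autoref{prop:compute-locally-in-rel-o}, and then deduce the identity. That argument works, and you correctly identify the one place needing care — compatibility of the parametrized transfers with restriction, which is formal once one remembers that transfers are characterized by Costenoble--Waner duality and that restriction preserves dual pairs. You also correctly observe that no nilpotence argument is needed.

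The paper instead treats the proposition as a one-liner: by \autoref{lem:cons-number-cx-oriented-ring-spectra}, the Euler number is already expressed intrinsically as $n_G(E) = \sum_{G\cdot x \subseteq Z(\sigma)} \Tr_{G_x}^G(1)$ inside the homotopy Mackey functor $\underline{\pi}_0 A$, and the statement $n_H(E) = \Res_H^G n_G(E)$ is then nothing more than the double coset formula for restriction and transfer in a Mackey functor — restriction of $\Tr_{G_x}^G(1)$ to $H$ produces $\sum \Tr_{H_y}^H(1)$ as $H\cdot y$ ranges over the $H$-orbits contained in $G\cdot x$, which is precisely the formula for $n_H(E)$ computed from the same section $\sigma$. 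The advantage of this route is that all of the naturality bookkeeping you would have to carry out has already been absorbed into the proof of \autoref{lem:cons-number-cx-oriented-ring-spectra}; the proposition then sits at the level of arithmetic in $\underline{\pi}_0 A$. Your version is a worthwhile exercise in understanding the machinery, but to make it airtight you would need to actually carry out the checks you only sketch (most delicately, that the point-set model of the transfer in \cite{MS} descends under restriction of the ambient universe), whereas the paper sidesteps all of this. One small slip: $\pi_0^H A = \Res_H^G \pi_0^G A$ is not a literal equality; $\Res_H^G$ is a map $\pi_0^G A \to \pi_0^H A$ and the two sides you write down are the target and the image, respectively.
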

\begin{proof} This follows directly from the computation of the Euler number along a section with isolated simple zeros being valued in the homotopy Mackey functor $\underline{\pi}_0 A$.
\end{proof}

Thus given any subgroup $G \le S_4$, we can compute the regular representation of the orbits of its 27 lines under the associated $G$-action by restricting the answer for $S_4$.

\begin{notation}\label{nota:subgps-s4} 
We denote by $C_2^o := \left\langle  (1\ 2)\right\rangle$ and $C_2^e := \left\langle (1\ 2)(3\ 4) \right\rangle$ the odd and even conjugacy classes of cyclic subgroups of order two in $S_4$. We denote by $K_4^\triangleleft$ the normal Klein 4-subgroup of $S_4$, and $K_4$ the non-normal one. For a Klein four group, we denote by $C_2^L$, $C_2^R$, and $C_2^\Delta$ the left, right, and diagonal cyclic subgroups of order two, respectively.
\end{notation}

\begin{corollary}\label{cor:subgps-s4} For all of the conjugacy classes of subgroups $G \le S_4$, we can compute the $G$-orbits of the 27 lines on a $G$-symmetric smooth complex cubic surface, where the $G$-action on $\CP^3$ is acting on coordinates. These are in Table 1.
\end{corollary}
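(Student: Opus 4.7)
The plan is to reduce the corollary to a purely group-theoretic computation using \autoref{prop:restriction-Euler-number}. By \autoref{thm:27-lines}, the 27 lines on a smooth symmetric complex cubic surface form an $S_4$-set isomorphic to $[S_4/C_2^o] + [S_4/C_2^e] + [S_4/D_8]$. For a $G$-symmetric smooth cubic surface with $G \le S_4$, the same argument as in \autoref{thm:27-lines}, but using the $G$-equivariant bundle $\Sym^3 \mathcal{S}^\ast \to \Gr_{\mathbb{C}}(1, \CP^3)$ and its associated section $\sigma_X$, shows that the 27 lines form the zero locus of a $G$-equivariant section with isolated simple zeros. The Euler number for this bundle in $\KU_G$-theory computes the permutation representation $\C[Z(\sigma_X)]$ by \autoref{ex:KUG-valued-Euler-class}, and \autoref{prop:restriction-Euler-number} tells us this is obtained by restricting the $S_4$-Euler number along the inclusion $G \hookrightarrow S_4$. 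Hence the first step is simply to observe that the $G$-orbit type is the restriction $\Res_G^{S_4}\bigl([S_4/C_2^o] + [S_4/C_2^e] + [S_4/D_8]\bigr)$.

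The second step is to carry out this restriction for one representative of each conjugacy class of subgroups of $S_4$, of which there are eleven: the trivial group, $C_2^o$, $C_2^e$, $C_3$, $C_4$, $K_4^\triangleleft$, $K_4$, $S_3$, $D_8$, $A_4$, and $S_4$. For each such $G$, and each of the three transitive $S_4$-sets $S_4/H$ with $H \in \{C_2^o, C_2^e, D_8\}$, the Mackey double coset formula gives
\[
\Res_G^{S_4}(S_4/H) \cong \coprod_{x \in G\backslash S_4 / H} G/(G \cap xHx^{-1}).
\]
Thus the task is to enumerate the $G$–$H$ double cosets in $S_4$ and compute the intersections $G \cap xHx^{-1}$. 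For the small subgroups ($G$ trivial, cyclic, or Klein four) the computation is short; for $G = D_8, A_4, S_4$ it is slightly more involved but still finite.

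The third step is to assemble these computations into Table~1. I would present the calculation either by hand, using explicit coset representatives, or by invoking GAP to confirm the decompositions. As a sanity check, one verifies that the total cardinality in each row is $27$, and that whenever $G \le G' \le S_4$, the decomposition for $G$ is compatible with further restricting the decomposition for $G'$.

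The main obstacle is not conceptual but bookkeeping: one must carefully track conjugacy classes of intersected subgroups inside each $G$ (for instance distinguishing $C_2^o$ from $C_2^e$, or the diagonal cyclic subgroup of a Klein four from the ``coordinate'' ones, as in \autoref{nota:subgps-s4}) so that the table reports isotropy data at the correct level of granularity. Once the conventions of \autoref{nota:subgps-s4} are fixed, filling in the table is a routine (if tedious) exercise in Mackey's formula.
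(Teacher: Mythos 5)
Your overall strategy matches the paper's: reduce to $\Res_G^{S_4}$ of the $S_4$-answer via \autoref{prop:restriction-Euler-number}, then do the bookkeeping. The paper's own proof just says ``the orbits listed are the $G$-orbits of the 27 lines on the Fermat cubic,'' which is the same computation as your double-coset decomposition of $\Res_G^{S_4}\bigl([S_4/C_2^o] + [S_4/C_2^e] + [S_4/D_8]\bigr)$, done geometrically instead of via Mackey's formula.

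There is, however, a real gap in your first step, and it is exactly the point the paper addresses in the remark immediately after this corollary. You pass from ``the $\KU_G$-valued Euler number computes $\C[Z(\sigma_X)]$, and this restricts from $S_4$'' to ``hence the $G$-orbit type is the restriction.'' That inference is not valid: knowing the permutation representation $\C[Z(\sigma_X)]$ restricts does not determine the underlying $G$-set, because the Burnside-ring map $A(G)\to R_\C(G)$ is not injective. For $G = K_4$, $K_4^\triangleleft$, and $D_8$ (precisely the cases singled out in the paper's remark), the character of $\C[Z(\sigma_X)]$ does not pin down the orbit decomposition even among honest $G$-sets. What you need is the $\MU_G$-valued argument of \autoref{thm:cons-number}: apply \autoref{prop:restriction-Euler-number} with $A=\MU_G$ rather than $A=\KU_G$, and then invoke the injectivity of $\pi_0^G\mathbb{S}\to\pi_0^G\MU_G$ (via \autoref{prop:MUG-detects-nilpotents}) to conclude that the $G$-set $Z(\sigma_X)$ is well-defined and equals $\Res_G^{S_4}$ of the $S_4$-set. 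With that repair, your step two (the double-coset computation, or equivalently reading off $G$-orbits of the explicit Fermat lines) is a routine verification, and the conjugacy-class bookkeeping you flag --- distinguishing $C_2^o$ from $C_2^e$, the diagonal $C_2$ in a Klein four group, etc. --- is indeed where care is needed.
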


\begin{proof} Following \autoref{prop:restriction-Euler-number}, the orbits listed are the $G$-orbits of the 27 lines on the Fermat cubic.
\end{proof}

\begin{remark} We point out that the computations in \autoref{cor:subgps-s4} demonstrate the full strength of working with an $\MU_G$-valued Euler class, rather than a $\KU_G$-valued Euler class, for instance. In fact \autoref{thm:27-lines} can be proven with a $\KU_{S_4}$-valued Euler class; we first compute the permutation representation of the 27 lines on a symmetric cubic surface, and then look to argue there is a unique $S_4$-set with this given permutation representation. While the Burnside ring homomorphism $A(S_4) \to R_\C(S_4)$ is not injective, by looking for an honest $S_4$-set (i.e. a point in the upper orthant of $A(S_4)\cong \Z^{11}$), we obtain a system of integral linear equations and inequalities, and we can verify on a computer using polyhedral methods that a unique solution exists. In trying to replicate this argument for all subgroups of $S_4$, we see that these polyhedral techniques fail for both copies of the Klein four-group, as well as the dihedral group $D_8 \le S_4$. Thus in order to obtain \autoref{cor:subgps-s4} the $\MU_G$-valued Euler class is needed.
\end{remark}

\begin{wraptable}{R}{0.5\textwidth}
\tiny
\renewcommand{\arraystretch}{1.3}%
    \begin{tabular}{|c | p{4.5cm}|}
    \hline
    \textbf{Subgroup} $G \le S_4$ & $G$-\textbf{orbits of lines}  \\
    \hline
    $e$ & $27[e/e]$ \\
    $C_2^o$ & $12 [C_2/e] + 3[C_2/C_2]$ \\
    $C_2^e$ & $10[C_2/e] + 7[C_2/C_2]$ \\
    $C_3$ & $9 [C_3/e]$ \\
    $K_4^\triangleleft$ & $[K_4/e] + 4[K_4/C_2^L] + 4[K_4/C_2^R] + 2[K_4/C_2^\Delta] + 3[K_4/K_4]$ \\
    $K_4$ & $4[K_4/e] + [K_4/C_2^L] + [K_4/C_2^R] + 3[K_4/C_2^\Delta] + [K_4/K_4]$ \\
    $C_4$ & $5[C_4/e] + 3[C_4/C_2^o] + [C_4/C_4]$ \\
    $S_3$ & $3[S_3/e] + 3[S_3/C_2^o]$ \\
    $D_8$ & $\left[ D_8/e \right] + 3[D_8 / C_2^e] + [D_8/C_2^o] + [D_8/K_4] + [D_8/D_8]$ \\
    $A_4$ & $\left[ A_4/e \right] + 2 \left[ A_4/C_2^e \right] + \left[ A_4/K_4 \right]$ \\
    $S_4$ & $\left[ S_4/C_2^o \right] + \left[ S_4/C_2^e \right] + \left[ S_4/D_8 \right]$ \\
    \hline
    \end{tabular}
    \vspace{-1em}
\label{table:subgroups-S4}
\caption{$G$-orbits of the 27 lines on a cubic surface, for $G \subseteq S_4$}
\end{wraptable}

\subsection{27 lines on a real symmetric cubic}

Observe that in the proof of \autoref{thm:27-lines}, the three lines in the orbit $[S_4/D_8]$, labeled in red, were in fact defined over the reals. This is true in general.

\begin{proposition}\label{prop:orbit-of-3-lines-is-real} Let $F$ be a real smooth symmetric cubic. Then on its complexification $V(F_\mathbb{C})$, the lines in the orbit $[S_4/D_8]$ are all defined over the reals, and hence form an orbit $[S_4/D_8]$ on $V(F)$.
\end{proposition}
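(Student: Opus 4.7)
The plan is to exploit the fact that complex conjugation commutes with the $S_4$-action on $\CP^3$ (since both act by coordinate operations and $S_4$ permutes real coordinates). Let $\sigma$ denote the complex conjugation involution of $\CP^3$, which by reality of $F$ restricts to an involution of $V(F_\C)$. A complex line $L \subset \CP^3$ is defined over $\R$ if and only if it is preserved setwise by $\sigma$, so the goal is to show $\sigma$ fixes each of the three $D_8$-orbit lines.

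First, I would note that since $\sigma$ and the $S_4$-action commute on $\CP^3$, the induced actions on the set of 27 lines commute as well. In particular $\sigma$ permutes $S_4$-orbits. By \autoref{thm:27-lines} there is a unique $S_4$-orbit of size three, namely $[S_4/D_8]$, so this orbit is $\sigma$-stable. Write this orbit as $\{L_1, L_2, L_3\}$.

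Next comes the main step: showing $\sigma$ acts trivially on $\{L_1, L_2, L_3\}$. As a transitive $S_4$-set with stabilizer $D_8$, this is $S_4/D_8$, and the centralizer of the image of $S_4 \to \Sym\{L_1,L_2,L_3\}$ is canonically $N_{S_4}(D_8)/D_8$. The dihedral subgroup $D_8 \le S_4$ is a Sylow $2$-subgroup and there are three such subgroups, so none of them are normal; hence $N_{S_4}(D_8) = D_8$ and the centralizer is trivial. Since $\sigma$ centralizes the $S_4$-action on $\{L_1,L_2,L_3\}$, it must act as the identity. This is the only non-routine part, though it reduces to an elementary group-theoretic calculation inside $S_4$.

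Finally, each $L_i$ being $\sigma$-stable means $L_i$ is defined over $\R$, and its real locus is a real line on $V(F)$. The three resulting real lines are permuted transitively by $S_4$ (since $S_4$ acts on $\R\mathrm{P}^3$ and its action on the complex lines already has orbit $[S_4/D_8]$), with stabilizer still $D_8$, so they form an $S_4/D_8$ orbit of real lines on $V(F)$, as claimed.
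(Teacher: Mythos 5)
Your proof is correct. The paper's own argument is considerably terser: it observes only that non-real lines come in complex-conjugate pairs and that $|S_4/D_8|=3$ is odd, and concludes from the parity that all three lines are real. Your route is different and more explicit: you use that complex conjugation $\sigma$ commutes with the $S_4$-action (both being real-coordinate operations on $\CP^3$), deduce that $\sigma$ stabilizes the unique size-$3$ orbit, and then kill $\sigma$ on that orbit by identifying the centralizer of the $S_4$-action on $S_4/D_8$ with $N_{S_4}(D_8)/D_8$, which is trivial since $D_8$ is a self-normalizing Sylow $2$-subgroup of $S_4$. The centralizer step is the added content, and it is what actually rules out the configuration the bare parity argument leaves open, namely one real line together with a conjugate pair of non-real lines inside the $3$-orbit. (Equivalently one could note that $S_4$ acts on the $3$-orbit through the full quotient $S_4/K_4^\triangleleft\cong S_3$, whose center is trivial.) So your version supplies a cleaner justification of the same conclusion; the paper's gets by on fewer words at the cost of leaving that case implicit.
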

\begin{proof} Since lines defined over $\mathbb{C}$ but not over $\R$ must come in complex conjugate pairs, any such orbit of lines must be of even size. Since $|S_4/D_8| = 3$, all of its lines must in fact be real.
\end{proof}

The study of rationality of lines on a real cubic surface is a classical problem dating back to the mid-1800's.

\begin{theorem} \cite{Schlafli} A real smooth cubic surface can only contain 3, 7, 15, or 27 real lines, and all of these possibilities do in fact occur.
\end{theorem}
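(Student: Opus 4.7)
The plan is to reduce the statement to a group-theoretic count inside the Weyl group $W(E_6)$, which acts as the full symmetry group of the 27-line configuration on a smooth complex cubic surface. For any real cubic $F$, complex conjugation $\tau$ acts on the complexification $V(F_\C)$ and permutes the 27 lines; the real lines on $V(F)$ are precisely the $\tau$-fixed ones. Since the automorphism group of the incidence graph of the 27 lines is $W(E_6)$, the induced permutation corresponds to an involution in $W(E_6)$, so the real line count equals the number of fixed points of an involution acting on this particular $27$-element set.

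Next I would enumerate the conjugacy classes of involutions in $W(E_6)$. Standard Weyl-group theory shows that, including the identity, there are exactly four such classes, distinguished by the rank of the $(-1)$-eigenspace on the reflection representation. For each class I would compute the number of fixed lines either from the character of the $27$-dimensional minuscule representation evaluated on the class, or concretely via the classical blowup model of the cubic surface as $\P^2$ blown up at six general points --- under which the 27 lines correspond to the six exceptional divisors, the fifteen strict transforms of lines through pairs of the six points, and the six strict transforms of conics through five of the six points. A direct tabulation then shows that the number of fixed lines is exactly one of $3, 7, 15, 27$.

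The main obstacle is ensuring that only these four values arise: not every permutation of the $27$-set is realized by a lattice involution, so one must verify that $\tau$ is compatible with the intersection form (equivalently, lifts to $W(E_6)$) before the case analysis can be carried out uniformly. For the realizability half, I would exhibit an explicit representative in each class: the Clebsch diagonal cubic is a classical example with $27$ real lines, and standard real perturbations of cubics with prescribed real locus (or of nodal degenerations) produce the counts $3$, $7$, and $15$. Alternatively, each case can be extracted from the classification of connected components of the moduli space of smooth real cubic surfaces, each component of which is known to contain a smooth surface realizing the claimed count.
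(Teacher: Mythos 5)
The paper does not prove this theorem: it is quoted as a classical result and cited to Schl\"afli, so there is no proof in the text to compare against. Your blind outline is therefore being judged on its own merits.

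Your lattice-theoretic reduction is the standard modern route (Segre, and its reformulations by Benedetti--Silhol, Finashin--Kharlamov, etc.): complex conjugation preserves the intersection form on $\text{Pic}(X_{\C})$, fixes $K_X$, permutes the $(-1)$-curves, and hence acts through $\Aut(\text{Schl\"afli graph}) = W(E_6)$, and the real lines are its fixed points. That part is correct. Two points deserve caution, though. First, your count of involution classes is off: $W(E_6)$ has \emph{four} nonidentity conjugacy classes of involutions, of Carter types $A_1,\,2A_1,\,3A_1,\,4A_1$ (distinguished by the rank of the $(-1)$-eigenspace on the reflection representation, which ranges over $1,2,3,4$), so together with the identity there are \emph{five} classes of elements of order dividing $2$, not four. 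This does not sink the argument, but only because the tabulation you postpone happens to produce a repeat: in the blowup model the fixed-line counts for the classes $e, A_1, 2A_1, 3A_1, 4A_1$ are $27, 15, 7, 3, 3$ respectively, so the set of achievable values is still $\{3,7,15,27\}$. Since the whole upper bound rests on this enumeration being complete and on these numbers coming out as claimed, you cannot wave at ``a direct tabulation'' --- that computation \emph{is} the proof, and the coincidence $3A_1 \mapsto 3$ and $4A_1 \mapsto 3$ needs to be witnessed explicitly. Second, your ``main obstacle'' is phrased slightly backwards: the issue is not whether $\tau$ is compatible with the intersection form (it automatically is, since an antiholomorphic involution of a complex surface preserves the cup product and $K_X$), but rather that for the converse (realizability) direction not every involution of $W(E_6)$ need arise from a real structure; that half of the theorem is genuinely geometric, and your appeal either to explicit models (Clebsch, plus perturbations/degenerations producing $15,7,3$) or to the classification of connected components of the real moduli space is the right way to close it, though the latter is close to being equivalent to the statement being proved.
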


\autoref{prop:orbit-of-3-lines-is-real} actually implies more --- by examining the possible fields of definition of the other orbits of 12 lines, we can easily eliminate the possibility of seven real lines on a real symmetric cubic surface. We can do better by using more refined information about the lines in question, namely their topological \textit{type}.

\begin{definition}\label{def:type-of-line} Let $\ell$ be a line on a smooth real cubic surface $X$, and consider the map
\begin{align*}
    \RP^1 \cong \ell &\to \SO(3) \\
    x &\mapsto T_x X.
\end{align*}
This associates to each line $\ell$ on the cubic surface a loop in the frame bundle $\pi_1(\SO(3))=\Z/2 = \left\{ \pm 1 \right\}$. The line $\ell$ is said to be \textit{hyperbolic} if the associated class is $+1 \in \Z/2$, and \textit{elliptic} if its associated class is $-1 \in \Z/2$. We refer to this as the \textit{type} of the line $\ell \subseteq X$.
\end{definition}

\begin{proposition}\label{prop:action-preserves-type} On a real symmetric cubic surface $X$, the $S_4$ action on $\CP^3$ by permuting coordinates preserves the topological type of any line.
\end{proposition}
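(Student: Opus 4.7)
The plan is to reduce the statement to the fact that the topological type of $\ell$ is an invariant of the diffeomorphism class of the pair $(X(\R),\ell)$, and then observe that the $S_4$-action furnishes exactly such diffeomorphisms.

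First I would set up the equivariant geometry. Since $F$ is a real symmetric polynomial, the surface $X = V(F)$ is defined over $\R$, and each $g \in S_4$ acts on $\CP^3$ as the projectivization of a real permutation matrix. Restricting to real points, $g$ is therefore a real algebraic diffeomorphism of $\RP^3$ preserving $X(\R)$, and for any line $\ell \subseteq X(\R)$ it gives a diffeomorphism of pairs
\[
g\colon (X(\R),\ell) \xto{\sim} (X(\R),\, g\cdot \ell).
\]

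The next step is to argue that the homotopy class $[x \mapsto T_xX] \in \pi_1(\SO(3)) = \Z/2$ of \autoref{def:type-of-line} depends only on the diffeomorphism class of the embedded pair. The differential $dg$ at each point $x \in \ell$ is a linear isomorphism $T_x\RP^3 \xto{\sim} T_{gx}\RP^3$ carrying $T_xX$ to $T_{gx}X$, and so one obtains a commutative square
\[
\begin{tikzcd}
\ell \ar[r,"g|_\ell"] \ar[d, "T_{-}X" left] & g\cdot\ell \ar[d, "T_{-}X" right] \\
\SO(3) \ar[r, "\alpha_g" below] & \SO(3),
\end{tikzcd}
\]
where $\alpha_g$ is the continuous self-map of $\SO(3)$ recording how $dg$ acts on frames along $\ell$. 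Since $\alpha_g$ is continuous and pointwise a linear isomorphism, it is homotopic to translation by a single element of $\SO(3)$, and hence acts trivially on the abelian group $\pi_1(\SO(3))$. So the loops on the two sides of the square represent the same class, giving the desired equality of types.

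The main obstacle is justifying that $\alpha_g$ really acts trivially on $\pi_1(\SO(3))$; this requires either a careful global framing argument or a reformulation. The cleanest way to sidestep this issue is to note that the type of $\ell$ is equivalently encoded by the first Stiefel--Whitney class $w_1(N_{\ell/X}) \in H^1(\RP^1;\Z/2)$ of the normal bundle, which classifies real line bundles over $\ell \cong \RP^1$ as either trivial (hyperbolic) or Möbius (elliptic). Since $g$ is a diffeomorphism of the ambient real $3$-fold $X(\R)$ taking $\ell$ to $g\cdot \ell$, it induces an isomorphism $dg\colon N_{\ell/X} \cong N_{g\cdot \ell/X}$ of real line bundles, which preserves $w_1$, and the proposition follows.
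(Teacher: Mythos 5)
Your proposal has a real gap, and the patch you offer to fill it is based on a false claim.

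The first route, via the square involving $\alpha_g$, founders precisely where you say it does. To even define the map $\ell\to\SO(3)$ in \autoref{def:type-of-line} one must trivialize $T\RP^3|_{\ell}$; the well-definedness of the type uses the essentially unique global parallelization of $\RP^3$. The map $\alpha_g$ then depends on comparing $dg$ against this global framing at both $\ell$ and $g\cdot\ell$, and the claim that it ``is homotopic to translation by a single element of $\SO(3)$'' is exactly what needs proof --- it would fail for a general diffeomorphism of $\RP^3$ preserving $X(\R)$.

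The patch via $w_1(N_{\ell/X})$ is incorrect, for two reasons. First, the type is genuinely an invariant of the embedded triple $\ell\subset X(\R)\subset\RP^3$, not of the abstract pair $(X(\R),\ell)$: it records how the tangent planes $T_pX$ turn inside the ambient $\RP^3$. Second, and more concretely, $w_1(N_{\ell/X})$ cannot distinguish hyperbolic from elliptic lines. By adjunction every line on a cubic surface is a $(-1)$-curve, so $N_{\ell_{\C}/X_{\C}}\cong\mathcal{O}(-1)$; a degree-$(-1)$ line bundle on $\P^1_{\R}$ has real points forming the M\"obius bundle, so $w_1(N_{\ell/X})$ is nontrivial for \emph{every} real line on a real cubic surface. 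Since a cubic with all $27$ real lines has $15$ hyperbolic and $12$ elliptic lines, the type must vary even though $w_1(N_{\ell/X})$ does not. (The same goes for $w_1(N_{X/\RP^3}|_{\ell})$ and $w_1(N_{\ell/\RP^3})$, which also do not vary across lines --- the type is a secondary, relative-framing invariant, not a Stiefel--Whitney class of any one normal bundle.)

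The paper's proof sidesteps all of this by using a different characterization of the type, due to \cite{FinashinKharlamov}: for $p\in\ell$ there is a unique $q\in\ell$ with $T_pX = T_qX$, giving an involution $p\mapsto q$ of $\ell$, and the type is the sign of the discriminant of its fixed locus. This involution is defined purely from the incidence of tangent planes in $\CP^3$, with no choice of coordinates or framing, so any projective-linear change of coordinates preserving $X$ --- in particular the $S_4$-action --- conjugates the involution on $\ell$ to the involution on $g\cdot\ell$ and therefore preserves the discriminant sign. If you want to keep your framing-based picture, you would need to establish compatibility of $dg$ with the global parallelization of $\RP^3$ (using that each $g\in S_4$ acts by an isometry); but the involution reformulation is cleaner and is what the paper uses.
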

\begin{proof} Given a line $\ell$ on a real cubic $X$, we have that for any point $p\in \ell$, there is a uniquely determined point $q \in \ell$ so that their tangent spaces are equal: $T_p X = T_q X$. This allows us to define an involution of the line $\ell$, given by sending $p \mapsto q$ for every such pair of points. The topological type of the line is equivalently defined via the discriminant of the fixed locus of this involution \cite{FinashinKharlamov}. Since this involution is defined independent of coordinates, it is invariant under a change of coordinates, and therefore the $S_4$-action does not affect the geometric properties of the involution attached to a line on $X$.
\end{proof}

This indicates that within an $S_4$-orbit, all lines have the same type. A classical result following from work of Segre indicates that the types of lines are constrained.

\begin{theorem} \cite{Segre,Benedetti-Silhol,OkonekTeleman,FinashinKharlamov,KW} Let $X$ be a real smooth cubic surface. Then the following equality holds:
\begin{align*}
    \# \left\{ \text{real hyperbolic lines on } X \right\} - \# \left\{ \text{real elliptic lines on } X \right\} = 3.
\end{align*}
\end{theorem}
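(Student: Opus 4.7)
The natural approach, closest in spirit to this paper, is to refine the complex Euler number $27$ of the bundle $\Sym^3\mathcal{S}^\ast\to\Gr(2,4)$ to a signed count in $\mathrm{GW}(\R)$ and take its signature; this is what Kass--Wickelgren carry out in the motivic setting \cite{KW}, and the local-to-global structure of their proof exactly parallels the conservation-of-number argument of \autoref{lem:conservation}, only now with values in $\mathrm{GW}(\R)$ rather than in $\pi_0^G A$.

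First I would check that $\Sym^3\mathcal{S}^\ast$ is relatively orientable, i.e.\ that $\det(\Sym^3\mathcal{S}^\ast)\otimes\det(T\Gr)^\vee$ admits a square root, which ensures a well-defined $\mathrm{GW}(\R)$-valued Euler number refining the complex rank. Then, by homotopy invariance, I would compute this global Euler number on any convenient example (or universally, as in \cite{KW}) and obtain $n(E) = 15\langle 1\rangle + 12\langle -1\rangle$, which has signature $3$.

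Next I would identify the local indices. For a real simple zero $\ell$, the local index is $\langle\det(\mathrm{Jac}(\sigma)|_\ell)\rangle\in\mathrm{GW}(\R)$ with respect to the chosen trivialization of the orientation line; the geometric content is that this sign equals $+1$ exactly when $\ell$ is hyperbolic in the sense of \autoref{def:type-of-line} and $-1$ when $\ell$ is elliptic. For a Galois-conjugate pair of non-real zeros $\{\ell,\bar\ell\}$, the combined local index is a trace $\mathrm{Tr}_{\C/\R}\langle\alpha\rangle$ for some $\alpha\in\C^\times$, which is always a hyperbolic form and therefore contributes zero to the signature. Summing these local indices and taking signatures on both sides of the local-to-global formula yields the claimed identity.

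The main obstacle is the sign identification: matching $\langle\det(d_\ell\sigma)\rangle$ in $\mathrm{GW}(\R)$ with the homotopy class of the Gauss-type loop $\ell\to\SO(3)$ of \autoref{def:type-of-line}. This bridge between analytic linear algebra on $\ell$ and the differential topology of the tangent framing is the substantive content carried out in \cite{FinashinKharlamov,OkonekTeleman} and underlies every proof of Segre's identity; the global Euler-number computation and the local-to-global decomposition, by contrast, are essentially formal given the machinery developed above.
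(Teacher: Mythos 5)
The paper does not prove this statement: it records it as a known classical theorem with citations, and the proof is entirely deferred to \cite{Segre,Benedetti-Silhol,OkonekTeleman,FinashinKharlamov,KW}. There is therefore no internal argument to compare against. Your sketch is an accurate roadmap of one of those cited proofs, namely the Kass--Wickelgren refinement: check that $\Sym^3\mathcal{S}^\ast$ is relatively orientable over the real Grassmannian, compute the $\mathrm{GW}(\R)$-valued Euler number universally to be $15\langle 1\rangle + 12\langle -1\rangle$, identify the local index at a real simple zero with $\langle\det(d_\ell\sigma)\rangle$ and show this sign is $+1$ for hyperbolic lines and $-1$ for elliptic lines, note that conjugate pairs of non-real zeros contribute hyperbolic forms, and take signatures. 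You also correctly isolate the one substantive step --- identifying the sign of the Jacobian determinant with the type of the line via the Gauss-type loop of \autoref{def:type-of-line} --- and defer it to \cite{FinashinKharlamov,OkonekTeleman}. Since the paper itself defers everything to the literature, there is no gap to flag relative to the paper's treatment; but be aware that what you have written is a description of how a proof in the literature is organized, not an independent derivation, and every nontrivial claim in it (relative orientability, the universal computation $15\langle 1\rangle+12\langle -1\rangle$, and above all the sign-to-type identification) is being taken on faith from the references rather than established.
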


Combining this with Schl\"afli's result, we have the following possibilities for real lines on a real smooth cubic surface:

\begin{center}
    \begin{tabular}{c | c c}
    $\substack{\text{Total number} \\ \text{of real lines}}$  & $\substack{\text{Number of} \\ \text{hyperbolic lines}}$ & $\substack{\text{Number of} \\ \text{elliptic lines}}$ \\ 
    \hline
    3 & 3 & 0 \\
    7 & 5 & 2 \\
    15 & 9 & 6 \\
    27 & 15 & 12.
    \end{tabular}
\end{center}

\begin{theorem}\label{thm:27-real} A real smooth symmetric cubic surface can only contain 3 or 27 real lines, and both of these possibilities do occur.
\end{theorem}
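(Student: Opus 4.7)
The plan is to combine the explicit $S_4$-orbit decomposition of \autoref{thm:27-lines} with two structural features of the action: the three-line orbit is automatically real by \autoref{prop:orbit-of-3-lines-is-real}, and topological type is constant along an $S_4$-orbit by \autoref{prop:action-preserves-type}. The first observation is that complex conjugation on $\CP^3$ commutes with the $S_4$-action, since $S_4$ acts by real permutation matrices, so it descends to an $S_4$-equivariant involution $c$ on the set of 27 complex lines of $V(F_{\C})$. Because $c$ sends an $S_4$-orbit to an orbit of the same isotropy type, and the subgroups $C_2^o$ and $C_2^e$ are not conjugate in $S_4$, the involution $c$ preserves each of the three orbits individually.

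The key lemma I would then prove is that for each of the two 12-line orbits, the number of real lines is either $0$ or $12$. Every $S_4$-equivariant self-map of $S_4/H$ is right multiplication by a class $w \in N_{S_4}(H)/H$, and a coset $gH$ is fixed iff $w \in H$. Thus an equivariant involution is either the identity (all 12 cosets fixed) or is fixed-point free. Combined with the three guaranteed real lines from the $D_8$-orbit, the total number of real lines must therefore be $3$, $15$, or $27$, already refining Schl\"afli without any type considerations.

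The main obstacle is ruling out $15$, and this is where \autoref{prop:action-preserves-type} does the work. Since every line in a given orbit has the same type, the three-line orbit contributes $\pm 3$ to the signed count (hyperbolic minus elliptic), and any real 12-line orbit contributes $\pm 12$. Segre's theorem forces this signed count to equal $3$. In the 15-line case, exactly one of the two 12-orbits is real, so the signed count is $\pm 3 \pm 12$; none of the four sign choices equals $3$, ruling out $15$ and leaving only $3$ and $27$.

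Finally, for existence I would invoke two explicit examples. The Clebsch surface is smooth and symmetric with all 27 lines real (see \autoref{fig:clebsch}), realizing the $27$ case. The Fermat cubic $x_0^3+x_1^3+x_2^3+x_3^3=0$ is real and symmetric, and by inspection of the parametrization in the proof of \autoref{thm:27-lines}, the only lines defined over $\R$ are the three ``red'' lines (those free of the primitive sixth root $\zeta$); every other line has a coordinate involving $\zeta$ and is swapped by conjugation with a distinct line in its orbit. This realizes the $3$ case and completes the proof.
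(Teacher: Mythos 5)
Your proof is correct and takes essentially the same route as the paper: the three-line $[S_4/D_8]$ orbit is automatically real, the set of real lines is an $S_4$-stable union of orbits (which forces $3$, $15$, or $27$), $15$ is eliminated by combining constancy of type on orbits with Segre's signed count, and the Fermat and Clebsch cubics realize $3$ and $27$. The only difference is that you spell out, via the $N_{S_4}(H)/H$ description of equivariant self-maps of $S_4/H$, why conjugation acts on each $12$-line orbit either trivially or freely — a step the paper leaves implicit in its reference to the discussion following \autoref{prop:orbit-of-3-lines-is-real}.
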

\begin{proof} By the argument following \autoref{prop:orbit-of-3-lines-is-real}, we have that the possibility of seven lines cannot happen, so it suffices to argue that 15 lines cannot occur as well. By \autoref{prop:action-preserves-type}, we have that the action preserves topological type. Since we only have two orbits of sizes 3 and 12, we see that we cannot possibly have 9 hyperbolic lines and 6 elliptic lines, which are the prescribed types via Segre's theorem, hence we cannot have 15 lines. To argue existence of the other solutions, we observe that the Fermat cubic is an example of a symmetric real cubic surface with three lines, while the Clebsch is a symmetric real cubic surface admitting all 27.
\end{proof}

\printbibliography
\end{document}